\date{}
\newcommand{\ud}{\mathrm{d}}
\newcommand{\bey}{\begin{eqnarray}}
\newcommand{\eey}{\end{eqnarray}}
\newcommand{\beq}{\begin{equation}}
\newcommand{\eeq}{\end{equation}}
\theoremstyle{plain}% default
\newtheorem{thm}{\hspace{6mm}Theorem}[section]
\newtheorem{lem}{\hspace{6mm}Lemma\,}[section]
\theoremstyle{definition}
\newtheorem{definition}{\hspace{6mm}Definition}[section]
\theoremstyle{remark}
\newtheorem{example}{\hspace{6mm}Example}[section]
\newtheorem{rem}{\hspace{6mm}Remark}[section]
\title{New superconvergent structures developed from the finite volume element method in 1D\thanks{This work was supported in part by the NSFC under grant \#11701211, the China Postdoctoral Science Foundation under grant \#2017M620106, and the Science Challenge Project grant \#TZ2016002}}
\author{Xiang Wang%
\thanks{School of Mathematics, Jilin University, Changchun 130012, China (wxjldx@jlu.edu.cn).}
\and Junliang Lv%
\thanks{School of Mathematics,Jilin University, Changchun 130012, China. (lvjl@jlu.edu.cn).}
\and Yonghai Li%
\thanks{School of Mathematics, Jilin University, Changchun 130012, China (yonghai@jlu.edu.cn).}
}
\begin{document}
% \vskip 1cm
\maketitle

\begin{abstract}
New superconvergent structures are introduced by the finite volume element method (FVEM), which allow us to choose the superconvergent points freely. The general orthogonal condition and the modified M-decomposition (MMD) technique are established to prove the superconvergence properties of the new structures. In addition, the relationships between the orthogonal condition and the convergence properties for the FVE schemes are carried out in Table~\ref{tab:relations}. Numerical results are given to illustrate the theoretical results.

%The most important result of this work is to find the new superconvergent structures obtained by
%the finite volume element method (FVEM). These structures cover the well-known Gauss-Lobatto structure, and include much more interesting FVE schemes, which are not available with the finite element method.
%For example, one can choose freely some points over \marc{any primary element} to make them superconvergent points for \marc{arbitrary} $k$-order ($k\geq3$) FVEM.
%We present strictly theoretical analysis by proposing more general $k$-$r$-order orthogonal condition and the modified M-decomposition (MMD) technique.
%
%In addition, we prove the unconditionally stability and figure out the relationships between the orthogonal conditions and the convergence properties in Figure~\ref{tab:relations} for the FVEM on symmetric dual meshes. Numerical results illustrate the theoretical results.
\end{abstract}

\noindent{\textbf{ AMS 2010 Mathematics Subject Classification.} }
65N12, 65N08, 65N30

\noindent{\textbf{ Key Words.}}
superconvergence, finite volume, modified M-decomposition, orthogonal condition

%\noindent{\textbf{ Abbreviated title.}}
%Conditioning of FVEM with General Meshes

%----------------------- section 1 ---------------------------
\section{Introduction}
\label{intro}

The finite volume element method (FVEM) \cite{Bank.1987,Barth.2004,Cai.1990,Cai.1991,Chen.2010, Chen.2015,Chou.2007,Hackbusch.1989,Li.2000,Wang.2019}, which is famous for the local conservation property,
has been studied widely for the stability and $H^1$ estimate
\cite{Chen.2012,Chen.2015,Li.1999,Liebau.1996,Schmidt.1993,Xu.2009,Zhang.2015},
$L^2$ estimate \cite{Chen.2002,Ewing.2002,Huang.1998,Lin.2015,Lv.2010,Lv.2012b,Wang.2016}, and superconvergence \cite{Cao.2013,Cao.2015,Chen.1994b,Lv.2012,Wang.2019b}. In this paper, we mainly focus on the new superconvergent structures developed from the FVEM in 1D. To the authors' knowledge, almost all existing natural superconvergence results of the FEM/FVEM are based on the famous Gauss-Lobatto structure. It's interesting that, the new superconvergent structures introduced in this paper cover the Gauss-Lobatto structure and include much more new FVE schemes.

Superconvergence is the phenomenon that the numerical solution (or the post-processed solution) converges faster than the generally expected rate at certain points or with certain metric.
It is an important issue, which helps to improve the accuracy of numerical methods such as the finite element method (FEM) \cite{Andreev.1988,
Schatz.1996,Thomee.1977,Wahlbin.1995,Wang.2001} and the finite volume element method (FVEM) \cite{Bank.1987,Cai.1990,Cao.2015,Lv.2012,Zhang.2014,Zhang.2015} etc..
%For an incomplete list of references, readers are referred to \cite{Andreev.1988,Babuska.2007,Bank.2003,Chen.1995,Cockburn.2017,Krizek.1987,
%Lin.1996,Schatz.1996,Thomee.1977,Wahlbin.1995,Wang.2001,Zhu.1989}
%for the finite element method and \cite{Bank.1987,Cai.1990,Cao.2013,Cao.2015,Lin.2013,Lv.2012, Wang.2019b,Zhang.2014,Zhang.2015}
%for the finite volume element method.
The study on superconvergence mainly lies in three aspects:
1) \textbf{the natural superconvergence}, in which the numerical solution superconverges to the exact solution at certain points, such as the famous Gauss-Lobatto structure for the FEM/FVEM, which gives superconvergent points at Gauss points (of the derivative/gradient)
or at Lobatto points (of the function value) (see \cite{Babuska.2007,Cao.2013,Chen.1995,Cockburn.2017,Lin.1996,Wang.2019b,Zhu.1989});
2) \textbf{the global superconvergence}, in which there exists a piecewise $k$-order approximation $u_I$ of $u$, such that we have the estimate $\|u_h-u_I\|_{0}=O(h^{k+2})$ or $|u_h-u_I|_{1}=O(h^{k+1})$ for the numerical solution $u_h$. The global superconvergence
results are the theoretical foundation of the other two types of superconvergence (see \cite{Babuska.2007,Chou.2007,Krizek.1987,Wang.2019b});
3) \textbf{the post-processed superconvergence}, in which the post-processed solution superconverges to the exact solution
in some norm (see \cite{Bank.2003,Lin.2013,Yang.2009}). The superconvergent patch recovery (SPR) \cite{Zienkiewicz.1992}
and polynomial preserving recovery (PPR) \cite{Zhang.2005} are two typical examples of the post-processed superconvergence technique. In this paper, we mainly talk about the first two aspects of superconvergence for the FVEM.

We first propose the general $k$-$r$-order $(k-1\leq r\leq 2k-2)$ orthogonal condition and the modified M-decomposition (MMD) technique for the FVEM, which help to discover and prove the new superconvergent structures. For the $k$-order FVEM, the general $k$-$r$-order orthogonal condition means $r$-order orthogonality
%\textcolor{red}{$w_1 - \Pi_h^{*} w_1$}
to a polynomial space in the sense of inner product. The dual points as well as the interpolation nodes of the trial-to-test operator are variable, which make it possible to design more FVE schemes for given order $k$. The general $k$-$r$-order orthogonal condition is a generalization of the $k$-$(k-1)$-order orthogonal condition proposed in \cite{Wang.2016}. We still call it the orthogonal condition without causing confusion.
On the other hand, when analyzing the superconvergence of the FEM, it's very technical to find a proper superclose function $u_I$, which bridges the exact solution $u$ and the numerical solution $u_h$. Researchers often decompose the difference $u-u_I$ into a linear combination of the M-polynomials, in which the coefficients of the M-polynomial combinations are obtained through restricting $u-u_I$ for better properties. This method of obtaining the superclose function is called the M-decomposition technique (see \cite{Chen.1995,Cockburn.2017}). The M-decomposition technique works well for the FEM, however it usually fails to be applied to the FVEM directly. For this reason, we propose the modified M-decomposition technique to obtain an appropriate superclose function for the FVEM.

Then, with the help of the orthogonal condition and the MMD technique, we construct and prove the new superconvergent structures for the FVEM. It's shown that, for given $k$-order ($k\geq3$) FVEM, there are much more than one scheme having the superconvergence property. As examples, the relationships between the Gauss-Lobatto structure based and the orthogonal condition based FVE schemes are shown in Figure~\ref{fig:alpha4A5order} (a) for $k=4$ and Figure~\ref{fig:alpha4A5order} (b) for $k=5$.
Furthermore, we also carry out easy ways to construct FVE schemes with superconvergence (see {\em Method I} and {\em Method II} in subsection~\ref{subsec:construction_easy}):
for odd $k$-order FVEM, we can freely choose the $k$ symmetric derivative superconvergent points on any primary element $K$ (excluding the endpoints of $K$);
and for even $k$-order FVEM, we can freely choose the $(k+1)$ symmetric function value superconvergent points on any primary element $K$ (including two endpoints of $K$).

%% The superconvergence property is the phenomenon that the
%\begin{table}[htbp!]
%  \centering
%  \caption{Relations between the orthogonal condition and properties of FVE schemes in 1D}\label{tab:relations}
%  \begin{tabular}{c|c c c}
%  \toprule
%                &  \multicolumn{3}{c}{the orthogonal conditions} \\
%  \cline{2-4}
%               & does not satisfy & \multirow{2}{*}{$k$-$(k-1)$-order}     &   \multirow{2}{*}{$k$-$k$-order} \\
%               & $k$-$(k-1)$-order &       &    \\
%  \midrule
%               &                 & optimal $H^1$       &  optimal $H^1$  \\
%  odd elements &  \multirow{2}{*}{There is no FVEM shceme}  &   optimal $L^2$     &  optimal $L^2$ \\
%  ($k=2l-1$)   &                 &  superconv 1      &  superconv 1 \\
%               &                 &    -    &   superconv 2  \\
%  \cline{1-4}
%                 &  optimal $H^1$   &  optimal $H^1$  &   optimal $H^1$\\
%  even elements  &       -          &  optimal $L^2$  &   optimal $L^2$ \\
%    ($k=2l$)     &       -          &  superconv 1      &  superconv 1\\
%                 &       -          &    -    &  superconv 2  \\
%  \bottomrule
%  \end{tabular}
%  \begin{tablenotes}
%      \footnotesize
%      \item[1] 1.The ``orthogonal condition'' column denotes the FVE scheme satisfies the corresponding order orthogonal condition.
%      \item[2] 2.``Superconv 1'' means superconvergence of the derivative, and ``Superconv 2'' means superconvergence of the function value.
%  \end{tablenotes}
%\end{table}

Moreover, for the completeness of the theory, we also present the proof of unconditionally stability, $H^1$ estimates and give the optimal $L^2$ estimates as a side product of the superconvergence of the derivative. Here, we show in Table~\ref{tab:relations} the relationships between the orthogonal condition and the convergence properties for the FVE schemes over symmetric dual meshes:
1) all FVE schemes possess the optimal $H^1$ estimates; 2) the $k$-$(k-1)$-order orthogonal condition ensures the optimal $L^2$ estimates and superconvergence of the derivative (see ``superconv 1'' in Table~\ref{tab:relations}); 3) the $k$-$k$-order orthogonal condition ensures the superconvergence of the function value (see ``superconv 2'' in Table~\ref{tab:relations}).
That is to say, when the $k$-$(k-1)$-order or $k$-$k$-order orthogonal condition is satisfied, the corresponding FVE schemes hold superconvergence properties. We call this superconvergent structure the ``orthogonal structure''.
\begin{table}[htbp!]
  \centering
  \caption{Relations between the orthogonal condition and properties of FVE schemes in 1D}\label{tab:relations}
  \begin{tabular}{c|c|cccc}
  \toprule
  \multicolumn{2}{c}{FVE schemes} & \multicolumn{4}{c}{Properties of FVE schemes}\\
  \cline{1-6}
              &  orthogonal condition & optimal $H^1$ &optimal $L^2$ & superconv 1& superconv 2\\
  \cline{1-6}
  odd order & $k$-$(k-1)$-order & $\surd$     &   $\surd$ & $\surd$ & - \\
    \cline{2-6}
   ($k=2l-1$) & $k$-$k$-order     & $\surd$     &   $\surd$ & $\surd$ & $\surd$ \\
  \cline{1-6}
           & does not satisfy & \multirow{2}{*}{$\surd$}     &  \multirow{2}{*}{-}
                                   & \multirow{2}{*}{-} & \multirow{2}{*}{-} \\
    even order       &    the $k$-$(k-1)$-order    &      &    &  &  \\
    \cline{2-6}
    ($k=2l$) & $k$-$(k-1)$-order  &\multirow{2}{*}{$\surd$} &\multirow{2}{*}{$\surd$}
                                &\multirow{2}{*}{$\surd$} &\multirow{2}{*}{$\surd$}\\
             & ($k$-$k$-order) &      &    &  &  \\
 \bottomrule
  \end{tabular}
  \begin{tablenotes}
      \footnotesize
      \item[1] 1. The ``$\surd$'' mark means holding, while the ``-'' mark means no results.

%      \item[2] 2. The ``orthogonal condition'' column denotes the FVE scheme satisfies the corresponding order orthogonal condition.
%      \item[2] 3. ``Superconv 1'' means superconvergence of the derivative, and ``Superconv 2'' means superconvergence of the function value.
  \end{tablenotes}
\end{table}

%Before summarising the rest parts of this paper, we'd like to reemphasize the superconvergence results of this paper,
%
%%1. By proposing more general orthogonal condition and the MMD, we construct some in the FVEM new superconvergent structures,
%%which include many FVE schemes more than the Gauss-Lobatto structure;
%
%1. By proposing the more general orthogonal condition and the MMD for the FVEM, we construct and prove the new superconvergent structures, which include the Gauss-Lobatto structure
%and cover much more FVE schemes;
%
%2. Adopting {\em Method I} and {\em Method II} in subsection~\ref{subsec:construction_easy}, we can easily construct FVE schemes with superconvergence,
%which means to freely set the derivative superconvergent points for odd order schemes and the function value superconvergent points for even order schemes.
%
%%\marc{As for the results in 2-dimension, which is our next step, are not simple extensions of the results in 1-dimension. Although, we believe similar phenomenons will obtained by the FVEM over quadraleteral meshes.}
%\marc{The extension of the work at hand to the two-dimensional case, which is the our next step, is not straightforward because of the greater variety of the geometry of the relevant finite element spaces. However, the ideas and methods developed here are instructive to two-dimensional problems on rectangular meshes.}

Following, we introduce the definition of the FVEM and some notations in section~\ref{sec:prelim}.
Then, the $k$-$r$-order orthogonal condition and the modified M-decomposition (MMD) are discussed in section~\ref{sec:OrthCond}.
In section~\ref{sec:Superconvergence}, we present the superconvergence of the derivative and the function value for FVEM.
In section~\ref{sec:construction}, we carry out the constructions of the FVE schemes with superconvergence.
%Moreover, {\em Method I} and {\em Method II} in subsection~\ref{subsec:construction_easy} provide us easy ways to make certain points to be the
%derivative superconvergent points and the function value superconvergent points, respectively.
Finally, we show numerical results in section~\ref{sec:numerical_ex} and make the conclusion in section~\ref{sec:conclusion}.
The stability and $H^1$ error estimate of the FVEM are provided in Appendix A.

%\begin{rem}
%%\textcolor{red}{From the $(\ref{eq:orth_eq_equivlent})$, we can see that the $k$-$k$-order the orthogonal condition is different for odd $k$ and even $k$.}
%For even $k$, the $k-k$-order orthogonal condition shares the same restrictions with the $k-(k-1)$-order orthogonal condition. That means Theorem~\ref{thm:super_H1} and Theorem~\ref{thm:Super_fun} shares the same restriction on the meshes for even $k$.
%
%For odd $k$, the $k$-$(k-1)$-order orthogonal condition is always satisfied for FVE schemes with symmetric dual meshes. The $k$-$k$-order orthogonal condition is stronger restrictions comparing with the $k$-$(k-1)$-order orthogonal conditions. That means the restriction of dual mesh in Theorem~\ref{thm:Super_fun} is stronger than Theorem~\ref{thm:super_H1} for odd $k$.
%\end{rem}

\section{FVE schemes of arbitrary order} \label{sec:prelim}
Consider the two-point boundary value problem on $\Omega=(0,1)$:
\begin{align} \label{eq:BVP1D}
\left\{ \begin{array}{rcl}
- (p(x) u'(x))'+ q(x) u'(x) + r(x) u(x)= f(x),  \quad \forall x\in\Omega,\\
u(0)=u(1) = 0 ,\qquad\qquad\qquad
\end{array} \right.
\end{align}
where $p\geq p_0>0$, $r-\frac{1}{2}q'\geq \gamma >0$, $p,q,r\in L^{\infty}$ and $f\in L^2(\Omega)$.

\subsection{The trial function space and test function space} \label{subsec:primary_mesh}
%For a positive integer $n$, let $\mathbb{Z}_n:=\{1,\dots,n\}$.
\textbf{Primary mesh and trial function space.}
Let $0=x_0<x_1<x_2<\dots<x_{N}=1$ be $N+1$ distinct points on $\overline{\Omega}$. For all $i\in \mathbb{Z}_{N}:=\{1,\dots,N\}$,
we denote $K_{i} = [x_{i-1},\,x_{i}]$, $h_{i}=x_{i}- x_{i-1}$, and $h=\max\limits_{i\in\mathbb{Z}_{N}}h_{i}$. Let
\[
\mathcal{T}_{h}=\{K_{i}: \, i\in \mathbb{Z}_{N} \}
\]
be a partition of $\Omega$, and call it the primary mesh. Choose the $k$-order ($k\geq1$) Lagrange finite element space as the corresponding trial function space
\begin{align*}
\mathit{U}_{h}^{k} := \{ w_{h}\in \mathit{C}(\Omega): \, w_{h}\vert_{K} \in P^{k}(K),\, \forall K\in \mathcal{T}_{h} \quad\!\!\! \mathrm{and}\quad\!\!\! w_{h}|_{\partial \Omega}=0\}.
\end{align*}
Here, $P^{k}(K)$ is the $k$-order polynomial space on $K$. It's clear that $\dim \mathit{U}_{h}^{k}=Nk-1$.

%\begin{rem}
%\textcolor{red}{
%For a given primary mesh $\mathcal{T}_{h}$, the trial function spaces hold the following relations
%\[
%\mathit{U}_{h}^{k}\subset\mathit{U}_{h}^{k+1},\quad \forall\, k\geq 1.
%\]
%% For different computing nodes set $\mathcal{N}_{trial}^{k}$, the trial space $\mathit{U}_{h}^{k}$ does not change.
%}
%\end{rem}

%\vspace{0.2cm}

\textbf{Dual mesh and test function space.} %\label{subsec:dualmesh}
%\textcolor{red}{[Figure. primary mesh, computing nodes, dual mesh, dual points]}
For $k$-order FVEM, let $-1< G_1 <\dots<G_k<1$ be $k$ symmetric points, to define the dual points, on the reference element $\hat{K}:=[-1,1]$.
There should be $j_0$ parameters $\alpha_{i}$ to locate the $G_{j}$ with
\begin{align}\label{eq:alpha_s}
1>\alpha_{1}>\cdots>\alpha_{j_{0}} >0,  \quad
j_{0} = \left\{
  \begin{array}{ll}
    l-1 , & k=2l-1, \\
    l , & k=2l, \\
\end{array}
\right. \quad l=1,2,3,\dots ,
\end{align}
such that
\begin{align} \label{eq:Gs}
\left\{
  \begin{array}{cc}
    - G_{j} = G_{k-j+1} = \alpha_j, & \mathrm{for}\,\,1\leq j \leq j_{0}, \\
    G_{l} =0, & \mathrm{if}\,\,\,k=2l-1. \\
  \end{array}
\right.
%   &  \mathrm{for}\,\,1\leq j \leq j_{0},\\
%G_{l} =0,                       &
\end{align}

The dual points on each primary element $K_{i}$ ($i\in \mathbb{Z}_{N}$) are defined as the affine transformations of points $G_j$ on $\hat{K}$ to element $K_i$,
that
\[
g_{i,j} = \frac{1}{2}(x_{i}+x_{i-1} + h_{i} G_j ), \quad j\in\mathbb{Z}_{k}, \,\,\,\mathrm{and}\,\,\, g_{N,k+1}=1.
\]
Based on the dual points $g_{i,j}$, we construct a dual partition
\[
\mathcal{T}_{h}^{*} = \{K_{1,0}^{*}\} \cup \{K_{i,j}^{*}: (i,j)\in\mathbb{Z}_{N}\times\mathbb{Z}_{k}\},
\]
where
\begin{align*}
&K_{1,0}^{*}=[0,g_{1,1}],\quad \mathrm{and} \quad K_{i,j}^{*} =[g_{i,j},g_{i,j+1}],\\
&g_{i,k+1}= g_{i+1,1},\,\, \forall\,  i\in \mathbb{Z}_{N-1}.
\end{align*}
The corresponding test function space $V_{h}$ is taken from the piecewise constant function space over $\mathcal{T}_{h}^{*}$,
which vanishes on the intervals $K_{1,0}^{*}\cup K_{N,k}^{*}$. Let
\[
V_{h} := \{v_h : \,v_h=\sum_{i=1}^{N}\sum_{j=1}^{k} v_{i,j}\psi_{i,j},\,\, (i,j)\in \mathbb{Z}_{N}\times\mathbb{Z}_{k}, \,\, v_{1,0}=v_{N,k}=0. \},
\]
where $v_{i,j}$ and $\psi_{i,j}=\chi[g_{i,j},\,g_{i,j+1}]$ are the constant and the characteristic function on $K_{i,j}^{*}$, respectively. Here, we have $\dim V_{h} = Nk-1 = \dim U_{h}^{k}$.

\subsection{FVE scheme}
Integrating (\ref{eq:BVP1D}) on each control volume $K_{i,j}^{*}=[g_{i,j},\,g_{i,j+1}]\in \mathcal{T}_{h}^{*}$, with integration by parts, we have
\begin{align} \label{eq:BVP_int}
p(g_{i,j})\,u'(g_{i,j}) - p(g_{i,j+1})\,u'(g_{i,j+1})
       + \int\limits_{ g_{i,j} }^{ g_{i,j+1} } q(x) u'(x) + r(x) u(x) \, \ud x
=  \int\limits_{ g_{i,j} }^{ g_{i,j+1} } f(x) \ud x.
\end{align}
For any $v_{h}\in V_{h}$, multiplying (\ref{eq:BVP_int}) with $v_{ij}$ and summing up over $K_{i,j}^{*}\in\mathcal{T}_{h}^{*}$, we have
\begin{align*}
\begin{array}{l}
  \sum\limits_{i=1}^{N} \sum\limits_{j=1}^{k} v_{i,j}\left(p(g_{i,j})\,u'(g_{i,j}) - p(g_{i,j+1})\,u'(g_{i,j+1})
       + \int\limits_{ g_{i,j} }^{ g_{i,j+1} } q(x) u'(x) + r(x) u(x) \, \ud x \right)
  =  \int\limits_{ 0 }^{ 1 } f(x)  v_h \, \ud x,
\end{array}
\end{align*}
which can also be written as
\[
\sum\limits_{i=1}^{N} \sum\limits_{j=1}^{k} [v_{i,j}] p(g_{i,j})\,u'(g_{i,j})
       + \sum\limits_{i=1}^{N} \sum\limits_{j=1}^{k} v_{i,j} \int\limits_{ g_{i,j} }^{ g_{i,j+1} } q(x) u'(x) + r(x) u(x) \, \ud x
=  \int\limits_{ 0 }^{ 1 } f(x)  v_h \ud x.
\]
Here $[v_{i,j}]=v_{i,j}-v_{i,j-1}$ is the jump of $v_{h}$ at point $g_{i,j}$, and $v_{i,0}=v_{i-1,k}$, $2\leq i\leq N$.

The finite volume element scheme for solving (\ref{eq:BVP1D}) is to find $u_{h} \in U_{h}^{k}$ such that
\begin{align} \label{eq:FVEM_BVP1D}
%a(u,v)=(f,v), \qquad \textrm{$ \forall v \in U,$}\\
a_{h}(u_{h},v_{h})=(f,v_{h}), \qquad  \forall v_{h} \in V_{h},
\end{align}
where
\begin{align*}
a_{h}(u_h,v_{h}) =\sum\limits_{i=1}^{N} \sum\limits_{j=1}^{k} [v_{i,j}] p(g_{i,j})\,u_h'(g_{i,j})
       + \sum\limits_{i=1}^{N} \sum\limits_{j=1}^{k} v_{i,j} \int\limits_{ g_{i,j} }^{ g_{i,j+1} } q(x) u_h'(x) + r(x) u_h(x)  \ud x,
\end{align*}
which can also be written as
\begin{align*}
a_{h}(u_h,v_{h}) =\sum\limits_{ K_{i}\in\mathcal{T}_{h} } \sum\limits_{ K_{i,j}^{*} \in \mathcal{T}_{h}^{*} }
-( p \, u_h' \, v_{h})|_{\partial K_{i,j}^{*} \cap K_{i}}      + \sum\limits_{K_{i}\in\mathcal{T}_{h}} \int\limits_{ K_{i} }  (q(x) u_h'(x) + r(x) u_h(x))v_{h}  \ud x.
\end{align*}

\subsection{Notations about interpolation operators}
\label{subsec:notations}
For convenience of the proof, we define the following two operators $\Pi_{h}^{k}$ and $\Pi_{h}^{k,*}$.

$\Pi_{h}^{k}: \mathit{H}^{1}(\Omega)\rightarrow \mathit{U}_{h}^{k}$, the piecewise $k$-order Lagrange interpolation operator.

%  \item $\Pi_{\mathcal{T}}^{*}: U_{h}^{k}\rightarrow V_{h}$, a linear mapping given by
%    \begin{align}\label{eq:PiT}
%    \Pi_{\mathcal{T}}^{*} w_{h} = \sum_{i=1}^{N}\sum_{j=1}^{k_{i}} w_{i,j} \psi_{i,j},
%    \end{align}
%    where the coefficients $w_{i,j}$ are determined by the constraints
%    \[
%    [w_{i,j}] = A_{i,j}w_{h}(g_{i,j}), \quad i\in\mathbb{Z}_{N}, \, j\in\mathbb{Z}_{r_{i}}.
%    \]
%    Here, $A_{i,j}$ are the weight of $k$-points numerical quadrature on $[x_{i-1},x_i]$, which will be defined and used in the proof of the stability and $H^1$ error estimates.

\begin{figure}[!ht]
    \centering
    \includegraphics[width=400pt]{./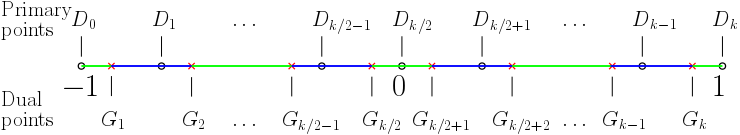}
    \caption{Computing nodes and dual points on $\hat{K}=[-1,1]$ for even order FVE schemes} \label{fig:nodes_even}
\end{figure}

$\Pi_{h}^{k,*}: \mathit{U}_{h}^{k} \rightarrow \mathit{V}_{h}$, a piecewise constant projection operator based on the dual mesh $\mathcal{T}_{h}^{*}$.
Let $-1= D_0 <\dots<D_k=1$ be $k+1$ symmetric points, to define the interpolation nodes of $\Pi_{h}^{k,*}$, on $\hat{K}=[-1,1]$. There should be $l-1$ parameters $a_{i}$ to locate all $D_{j}$ with constraints
    \begin{align}\label{eq:a_s}
    1>a_{1}>\cdots>a_{l-1} >0,  \quad
     k=2l-1\,(\mathrm{or}\, k=2l), \quad l=1,2,3,\dots ,
    \end{align}
     such that
    \begin{align} \label{eq:Ds}
    \left\{
      \begin{array}{cl}
        - D_{j} = D_{k-j} = a_j, & \mathrm{for}\,\,1\leq j \leq l-1, \\
        D_{l} =0, & \mathrm{if}\,\,\,k=2l. \\
      \end{array}
    \right.
    \end{align}
    The interpolation nodes of $\Pi_{h}^{k,*}$ are defined by the affine transformations of the points $D_j$ on $\hat{K}$ to the elements $K_i$, that
    \[
    d_{1,0}=-1,\,\,\,\mathrm{and}\,\,\,d_{i,j} = \frac{1}{2}(x_{i}+x_{i-1} + h_{i} D_j ), \quad j\in\mathbb{Z}_{k}.
    \]
    Then, for any $w_{h}\in\mathit{U}_{h}^{k}$, $\Pi_{h}^{k,*} w_{h}$ is given by
    \begin{align*}
      \Pi_{h}^{k,*}w_{h}\vert_{K_{i,j}^{*}}=w_{h}(d_{i,j}),\quad \forall K_{i,j}^{*}\in \mathcal{T}_{h}^{*}.
    \end{align*}

\begin{figure}[!ht]
    \centering
    \includegraphics[width=400pt]{./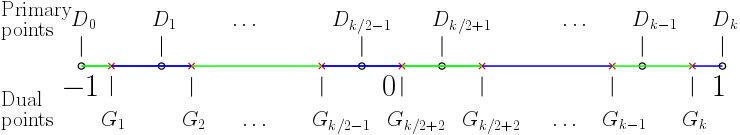}
    \caption{Computing nodes and dual points on $\hat{K}=[-1,1]$ for odd order FVE schemes} \label{fig:nodes_odd}
\end{figure}

\begin{rem}
$\Pi_{h}^{k,*}$ is only used for the analysis, and has no influence in practical computations.
\end{rem}

% H1_label

\section{The orthogonal condition and modified M-decomposition}
\label{sec:OrthCond}
The orthogonal condition and the modified M-decomposition (MMD) are two important tools in the superconvergence analysis of the FVEM.
The orthogonal condition can be used not only to constructing of the FVE schemes, but also to eliminate low-order terms in the analysis.
While, the MMD helps to find the proper superclose function $u_I$, which bridges the exact solution $u$ and the numerical solution $u_h$ at superconvergent points.

\subsection{The orthogonal condition}
For $k$-order FVEM, the \textbf{$k$-$r$-order orthogonal condition} ($k-1\leq r\leq 2(k-1)$) is the restriction on the dual meshes and interpolation nodes of the operator $\Pi_{h}^{k,*}$ for $r$-order orthogonality. Comparing with the $k$-$(k-1)$-order orthogonal condition proposed in \cite{Wang.2016}, the $k$-$r$-order orthogonal condition does not require the interpolation nodes of $\Pi_{h}^{k,*}$ to be uniform, to obtain higher-order orthogonality.
%%we call the corresponding FVE scheme satisfies the \textbf{$k$-$r$-order orthogonal condition} ($k-1\leq r\leq 2(k-1)$).
%The $k$-$(k-1)$-order orthogonal condition was first proposed in \cite{Wang.2016} to deal with optimal $L^2$ convergence rate of the FVEM on triangular meshes. In \cite{Wang.2016} we fixed the interpolation nodes of $\Pi_{h}^{k,*}$ as the computing nodes of the trial space which are uniform. In the present paper, we propose more general $k$-$r$-order orthogonal condition in 1D, which allow the interpolation nodes of $\Pi_{h}^{k,*}$ to be chosen freely.

\begin{definition}[The $k$-$r$-order orthogonal condition]
A $k$-order FVE scheme or the corresponding dual strategy is called to satisfy the $k$-$r$-order orthogonal condition, if there exists a mapping $\Pi_{h}^{k,*}$ such that the following equations hold.
\begin{align}\label{eq:orth_condition1D}
%&\int_{K}g(x)(w-\Pi_{h}^{k,*}w)(x) \ud x =0.%,\quad \forall g\in P^{r}(K), \forall w\in P^{1}(K).
&\int_{K}g(x)(w-\Pi_{h}^{k,*}w)(x) \ud x =0,\quad \forall g\in P^{r}(K),\,\, \forall w\in P^{1}(K),\,\, \forall K\in\mathcal{T}_{h}.
\end{align}
\end{definition}

It is enough to analyse the $k$-$r$-order orthogonal condition on $\hat{K}:=[-1,1]$.

\begin{lem} \label{lem:orth}
For any $k\geq2$, the $k$-$r$-order orthogonal condition (\ref{eq:orth_condition1D}) is equivalent to the following equations.
\begin{align}
% \nonumber to remove numbering (before each equation)
   (1- \alpha_{1}^{\,\,i+1}) +
    \sum_{j=1}^{l-2}\,a_{j} ( \alpha_{j}^{\,\,i+1} - \alpha_{j+1}^{\,\,i+1} )\, +\,a_{l-1} \,\alpha_{l-1}^{\,\,i+1}
    &= \frac{i+1}{i+2},\quad \mathrm{for} \,\, k=2l-1,  \label{eq:orth_eqs_odd1}\\
%  [-0.2ex]  \nonumber\\
   (1 - \alpha_{1}^{\,\,i+1} ) +
    \sum_{j=1}^{l-1}\,a_{j} ( \alpha_{j}^{\,\,i+1} - \alpha_{j+1}^{\,\,i+1} )
     &=  \frac{i+1}{i+2}, \quad
    \mathrm{for}\,\,  k=2l,  \label{eq:orth_eqs_even1}
\end{align}
where $i=1,3,5,\dots (i\leq r)$. Here, $\alpha_{j}$ $(j\in\mathbb{Z}_{j_{0}})$ are defined by (\ref{eq:alpha_s}),
which locate the dual points on $\hat{K}$. The parameters $a_{j}$ $(j\in\mathbb{Z}_{l-1})$ are defined by (\ref{eq:a_s}),
which confirm the interpolation nodes of $\Pi_{h}^{k,*}$.
\end{lem}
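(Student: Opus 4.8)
The plan is to reduce the integral orthogonality condition (\ref{eq:orth_condition1D}) on a general primary element $K$ to the reference element $\hat K=[-1,1]$ by the affine change of variables, and then to evaluate both sides explicitly. First I would observe that since $\Pi_h^{k,*}$ is defined element-wise through the affine map sending $\hat K$ to $K_i$, and since polynomial degree is preserved under affine maps, it suffices to verify (\ref{eq:orth_condition1D}) on $\hat K$ with $g$ ranging over a basis of $P^r(\hat K)$ and $w$ ranging over a basis of $P^1(\hat K)$. A natural basis for $P^r$ is the monomials $\{x^0,x^1,\dots,x^r\}$, and for $P^1$ the pair $\{1,x\}$, so the condition becomes the finite family of scalar equations
\begin{align*}
\int_{-1}^{1} x^{m}\bigl(w-\Pi_h^{k,*}w\bigr)(x)\,\ud x = 0, \qquad 0\le m\le r,\ w\in\{1,x\}.
\end{align*}

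Next I would exploit the built-in symmetry of the construction to discard most of these equations. Because the dual points $G_j$ and the interpolation nodes $D_j$ are symmetric about the origin (by (\ref{eq:Gs}) and (\ref{eq:Ds})), the piecewise-constant operator $\Pi_h^{k,*}$ commutes with the reflection $x\mapsto -x$ in an appropriate sense. I expect that for $w\equiv 1$ the defect $w-\Pi_h^{k,*}w$ is an even function while for $w=x$ it is odd; pairing against the monomial $x^m$ then forces the integral to vanish automatically whenever the parity of $m$ disagrees with that of the defect. This symmetry argument should show that the $w\equiv 1$ equations contribute nothing new (the constant is reproduced exactly by the projection at the nodes, or the surviving equations hold trivially), and that among the $w=x$ equations only the ones with $m$ of the correct parity survive. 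Writing $i=m$, this is precisely why the lemma retains only the \emph{odd} indices $i=1,3,5,\dots$ up to $r$.

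The core computational step is then to evaluate $\int_{-1}^{1} x\cdot x^{\,i}\,\Pi_h^{k,*}(x)\,\ud x=\int_{-1}^{1} x^{\,i+1}\,\Pi_h^{k,*}(x)\,\ud x$ for these surviving odd $i$. Since $\Pi_h^{k,*}w$ is piecewise constant on the dual subintervals $[g_{i,j},g_{i,j+1}]$ and takes the value $w(d_{i,j})$ there, on $\hat K$ the function $\Pi_h^{k,*}(x)$ equals the constant $D_j$ on the subinterval between consecutive dual points. Integrating the monomial $x^{i+1}$ over each such subinterval gives a telescoping sum of the form $\tfrac{1}{i+2}\bigl[(\text{upper})^{i+2}-(\text{lower})^{i+2}\bigr]$ weighted by the node values $D_j$, and substituting the symmetric parametrizations $G_j=\pm\alpha_j$, $D_j=\pm a_j$ collapses the double-sided sum into the single-sided sums displayed in (\ref{eq:orth_eqs_odd1}) and (\ref{eq:orth_eqs_even1}). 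On the other side, $\int_{-1}^{1} x^{\,i+2}\,\ud x=\frac{2}{i+3}$ for $w=x$ against $x^i$, and after normalizing I expect the right-hand side to reduce to $\frac{i+1}{i+2}$; matching the two splits the odd-$k$ and even-$k$ cases exactly as in the two stated identities, since $j_0=l-1$ forces an extra boundary term $a_{l-1}\alpha_{l-1}^{\,i+1}$ in the odd case coming from the central node $G_l=0$.

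The main obstacle will be the careful bookkeeping of the telescoping sum: one must correctly pair each dual subinterval on $\hat K$ with the interpolation node value $D_j$ governing it, track the two endpoints $\pm 1$, and handle the central interval straddling the origin differently in the odd-order case ($k=2l-1$, where $G_l=0$) versus the even-order case ($k=2l$, where $D_l=0$). Verifying that the contributions from the reflected half correctly cancel the even powers and double the odd powers — so that only the claimed single-sided sums remain — is where sign errors are most likely to creep in, and this is the step I would check most carefully against the small cases $k=2,3$ before asserting the general pattern.
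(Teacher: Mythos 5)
Your overall strategy is exactly the paper's: reduce to the reference element, test against the monomial bases of $P^r$ and $P^1$, use the symmetry of the $G_j$ and $D_j$ to note that $x-\Pi_h^{k,*}x$ is odd so that only odd $i$ give nontrivial equations, and then evaluate the surviving integrals as a telescoping sum over the dual subintervals. However, your core computational step contains an index error that, taken literally, makes the argument collapse. The surviving equations are $\int_{-1}^{1}x^{i}\,(x-\Pi_h^{k,*}x)\,\ud x=0$ for odd $i$, so the projected term you must evaluate is $\int_{-1}^{1}x^{i}\,\Pi_h^{k,*}(x)\,\ud x$, and the moment it must match is $\int_{-1}^{1}x^{i+1}\,\ud x=\tfrac{2}{i+2}$ (here $i+1$ is even). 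Instead you insert an extra factor of $x$ and work with $\int_{-1}^{1}x^{i+1}\,\Pi_h^{k,*}(x)\,\ud x$ matched against $\int_{-1}^{1}x^{i+2}\,\ud x$, which you assert equals $\tfrac{2}{i+3}$. For odd $i$ the power $i+2$ is odd, so that integral is $0$; moreover, by the very parity argument you invoked, $x^{i+1}\Pi_h^{k,*}(x)$ is even times odd, hence odd, so its integral is $0$ as well. Your key identity therefore reads $0=0$: it imposes no constraint at all on the $\alpha_j$ and $a_j$, and no amount of telescoping bookkeeping can recover (\ref{eq:orth_eqs_odd1})--(\ref{eq:orth_eqs_even1}) from it.

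The fingerprint of the error is visible in the exponents: antidifferentiating $x^{i}$ over each dual subinterval produces the factor $\tfrac{1}{i+1}$ and the powers $G_j^{\,i+1}$, which after multiplying through by $i+1$ give exactly the powers $\alpha_j^{\,i+1}$ and the right-hand side $\tfrac{i+1}{i+2}$ appearing in the lemma; your version would produce powers $\alpha_j^{\,i+2}$ and a right-hand side involving $i+3$, inconsistent with the equations you claim to reach (and with your own concluding sentence, which correctly cites the $i+1$ powers). Once this off-by-one is repaired, the remainder of your plan -- the telescoping sum weighted by the node values $D_j$, the boundary contributions at $\pm1$, the central term $a_{l-1}\alpha_{l-1}^{\,i+1}$ coming from $G_l=0$ in the odd case, and the final symmetric collapse to single-sided sums -- is precisely the paper's proof.
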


\begin{proof}
When $w$ is a constant on $\hat{K}$, $w\equiv \Pi_{h}^{k,*}w$. So (\ref{eq:orth_condition1D}) holds for constant $w$.
Noting that $P^{r}([-1,1])={\mathrm{ Span}}\{1,x,\dots,x^r\}$ and $P^{1}([-1,1])={\mathrm{Span}}\{1,x\}$, (\ref{eq:orth_condition1D}) is equivalent to
\begin{align}%\label{eq:orth_eq_1}
\int_{-1}^{1} g (x-\Pi_{h}^{k,*}x) \ud x =0,\quad \forall g\in \{1,x,\dots,x^r\}.  \nonumber
\end{align}
Since $(x-\Pi_{h}^{k,*}x)$ is an odd function on $[-1,1]$, we arrive at
\begin{align} \label{eq:orth_eq_equivlent}
\int_{-1}^{1} x^{i} (x-\Pi_{h}^{k,*}x) \ud x =0,\quad {\mathrm{for}}\,\,i=1,3,5,\dots,\, i\leq r,
\end{align}
which implies
\begin{align} \label{eq:orth_eq_equiv_equations}
2\int_{0}^{1} x^{i} \Pi_{h}^{k,*}x \ud x = \frac{2}{i+2},\quad {\mathrm{for}}\,\, i=1,3,5,\dots,\, i\leq r.
\end{align}
Substituting the expression of $\Pi_{h}^{k,*}$ into (\ref{eq:orth_eq_equiv_equations}), one has
\begin{align*}
  %\int_{0}^{1} x^{i} \Pi_{h}^{k,*}x \ud x &=&
  \frac{1}{i+1}  \left((1-G_{k}^{\,\,i+1}) +\sum_{j=l}^{k-1} D_{j} ( G_{j+1}^{\,\,i+1} - G_{j}^{\,\,i+1} )   \right)
    &= \frac{1}{i+2}, \quad {\mathrm{for}}\,\, k=2l-1,\\
%             \label{eq:orth_eq_odd0}\\
  \frac{1}{i+1}  \left((1-G_{k}^{\,\,i+1}) +\sum_{j=l+1}^{k-1} D_{j} ( G_{j+1}^{\,\,i+1} - G_{j}^{\,\,i+1} )   \right)
    &= \frac{1}{i+2}, \quad {\mathrm{ for}}\,\, k=2l,
             %\label{eq:orth_eq_even0}
\end{align*}
where $i=1,3,5,\dots$, and $i\leq r$.

Together with (\ref{eq:Gs}) and (\ref{eq:Ds}), we have Lemma~\ref{lem:orth}.
%equivalent restrictions of (\ref{eq:orth_condition1D})
%\begin{align}
%% \nonumber to remove numbering (before each equation)
%   (1- \alpha_{1}^{\,\,i+1}) +
%    \sum_{j=1}^{l-2}\,a_{j} ( \alpha_{j}^{\,\,i+1} - \alpha_{j+1}^{\,\,i+1} )\, +\,a_{l-1} \,\alpha_{l-1}^{\,\,i+1}
%    &=& \frac{i+1}{i+2},\quad \mathrm{for} \,\, k=2l-1, \\% \label{eq:orth_eqs_odd1}\\
%%  [-0.2ex]  \nonumber\\
%   (1 - \alpha_{1}^{\,\,i+1} ) +
%    \sum_{j=1}^{l-1}\,a_{j} ( \alpha_{j}^{\,\,i+1} - \alpha_{j+1}^{\,\,i+1} )
%     &=&  \frac{i+1}{i+2}, \quad
%    \mathrm{for}\,\,  k=2l,    %\label{eq:orth_eqs_even1}
%\end{align}
%where $i=1,3,5,\dots,\, i\leq r$, $k\geq2$.
\end{proof}

%\textcolor{red}{The $k$-$r$-order orthogonal condition (\ref{eq:orth_condition1D}) can be easily extended to $K_{i}\in\mathcal{T}_{h}$ with the affine transformations from $K$ to $K_{i}$.}

%\begin{lem}
%Suppose $\Pi_{h}^{k,*}$ is based on the $k$ Gauss points $(\{\alpha_{j}\}_{j\in\mathbb{Z}_{j_{0}}})$, then there always exists a choice of the interpolation nodes $(\{a_{j}\}_{j\in\mathbb{Z}_{l-1}})$, such that the $k$-$r$-order orthogonal condition could always be satisfied for all $r$ $(k-1\leq r\leq 2(k-1))$.
%\end{lem}
\begin{lem} \label{lem:Integral}
Given $k$, for all $r\in\{t\in\mathbb{Z}:\,k-2\leq t\leq 2(k-1)\}$, there exists an operator $\Pi_{h}^{k,*}$, such that the corresponding FVE scheme satisfies the $k$-$r$-order orthogonal condition.
\end{lem}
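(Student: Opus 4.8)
The plan is to first invoke Lemma~\ref{lem:orth} to replace the $k$-$r$-order orthogonal condition by the finite algebraic system \eqref{eq:orth_eqs_odd1}--\eqref{eq:orth_eqs_even1}, one equation per odd index $i$ with $i\le r$. Since $P^{r'}(K)\subset P^{r}(K)$ whenever $r'\le r$, any operator $\Pi_{h}^{k,*}$ realizing the $k$-$r$-order condition automatically realizes the $k$-$r'$-order condition; hence it suffices to exhibit a single operator meeting the strongest requirement $r=2(k-1)$, and the whole range $k-2\le r\le 2(k-1)$ then follows at once. For this top value the number of odd indices $i\in\{1,3,5,\dots\}$ with $i\le 2(k-1)$ equals $k-1$, which matches exactly the number of free parameters $\alpha_{1},\dots,\alpha_{j_{0}},a_{1},\dots,a_{l-1}$; thus I must solve a \emph{square} nonlinear system subject to the ordering constraints \eqref{eq:alpha_s} and \eqref{eq:a_s}.

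The key step is to recognize this square system as a classical Gaussian quadrature problem. First I would set $\beta_{j}:=\alpha_{j}^{2}$; because every exponent $i+1$ occurring in \eqref{eq:orth_eqs_odd1}--\eqref{eq:orth_eqs_even1} is even (as $i$ is odd), each equation becomes polynomial in the $\beta_{j}$. Collecting the $\beta_{j}^{\,p}$ terms with $p=(i+1)/2$ and abbreviating the telescoping combinations of the $a_{j}$ by weights $w_{j}$, I expect each equation to take the uniform shape $\sum_{j}w_{j}\beta_{j}^{\,p}=1/(2p+1)$. The crucial observation is that $1/(2p+1)=\int_{0}^{1}s^{p}\,\ud\rho(s)$ for the positive weight $\ud\rho(s)=\tfrac12 s^{-1/2}\,\ud s$ on $[0,1]$: in the even case ($k=2l$) the desired identities are exactly the statement that $\{(\beta_{j},w_{j})\}_{j=1}^{l}$ is the $l$-point Gaussian rule for $\ud\rho$ (the index $p=0$ being forced for free by the telescoping identity $\sum_{j}w_{j}=1$), while in the odd case ($k=2l-1$) the substitution $\tilde w_{j}=w_{j}\beta_{j}$ recasts the conditions as the $(l-1)$-point Gaussian rule for the shifted weight $\ud\tilde\rho(s)=\tfrac12 s^{1/2}\,\ud s$. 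In both cases standard Gaussian quadrature theory supplies distinct nodes $\beta_{j}\in(0,1)$ with strictly positive weights, so that $\alpha_{j}=\sqrt{\beta_{j}}$, ordered decreasingly, satisfy \eqref{eq:alpha_s}.

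It then remains to recover admissible interpolation parameters $a_{j}$ and to verify the strict inequalities in \eqref{eq:a_s}. Inverting the telescoping relations yields $a_{j}=1-(w_{1}+\cdots+w_{j})$ with every $w_{j}>0$, so $1>a_{1}>\cdots>a_{l-1}$ is immediate; the only genuine point is $a_{l-1}>0$, i.e. $\sum_{j}w_{j}<1$. For even $k$ this is automatic, since the Gaussian weights for $\ud\rho$ sum to $\int_{0}^{1}\ud\rho=1$, forcing $a_{l-1}=w_{l}>0$. For odd $k$ it is the one nontrivial estimate: here $\sum_{j}w_{j}=\sum_{j}\tilde w_{j}\beta_{j}^{-1}$ is the $(l-1)$-point Gaussian approximation of $\int_{0}^{1}s^{-1}\,\ud\tilde\rho=1$, and because all even-order derivatives of $s\mapsto 1/s$ are positive on $(0,1)$, the Gaussian remainder formula shows the rule strictly underestimates the integral, whence $\sum_{j}w_{j}<1$ exactly as required.

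I expect the main obstacle to be precisely this last positivity check in the odd case, together with the careful bookkeeping that turns \eqref{eq:orth_eqs_odd1}--\eqref{eq:orth_eqs_even1} into clean moment identities and correctly tracks which telescoped weight attaches to which node. Once the system is identified as Gaussian quadrature against the weights $s^{\mp 1/2}$ on $[0,1]$, the existence and distinctness of the nodes and the positivity of the weights are classical, and the constraints \eqref{eq:alpha_s}--\eqref{eq:a_s} then follow mechanically.
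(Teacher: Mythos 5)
Your proof is correct in substance and rests on the same key identification as the paper's: the orthogonal condition is the exactness of a quadrature rule whose nodes are the dual points $G_j$ and whose weights are the increments $D_j-D_{j-1}$. The paper makes this identification directly on $\hat K=[-1,1]$ (its equation \eqref{eq:Integral}), notes that the weights sum to $2$, and then simply cites \cite{Davis.1984}: $k$-point rules exist with exactness degree $\tilde k=r+1$ for every $\tilde k\in[k-1,\,2k-1]$, one rule per value of $r$. You proceed differently in three respects. (i) You exploit monotonicity in $r$ (since $P^{r'}\subset P^{r}$) to reduce everything to the single extreme case $r=2(k-1)$; this is legitimate for the lemma as stated, and it is actually safer than a per-$r$ selection, because symmetric rules of intermediate exactness may have negative weights, which would violate the monotonicity $D_0<\cdots<D_k$ implicit in \eqref{eq:Ds}, whereas the Gauss weights are positive. (ii) You fold the symmetric systems \eqref{eq:orth_eqs_odd1}--\eqref{eq:orth_eqs_even1} onto $[0,1]$ by $\beta_j=\alpha_j^2$ and identify them with the Gauss rules for $\tfrac12 s^{-1/2}\,\ud s$ (even $k$) and, after $\tilde w_j=w_j\beta_j$, for $\tfrac12 s^{1/2}\,\ud s$ (odd $k$); this is equivalent to the paper's symmetric-rule viewpoint but delivers distinct interior nodes and positive weights as a package. (iii) You explicitly verify the admissibility constraints \eqref{eq:alpha_s} and \eqref{eq:a_s}, which the paper's proof passes over in silence. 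The only content of the paper's version missing from yours is that each intermediate degree $r$ is realized by some (possibly different) dual strategy; the lemma as stated does not require this.

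One step needs repair: in the odd case you invoke the Gaussian remainder formula for $f(s)=1/s$, but that formula requires $f\in C^{2m}[0,1]$ with $m=l-1$, and $1/s$ is singular at $s=0$. The conclusion $\sum_j w_j<1$ is still true and can be obtained by pure polynomial exactness: let $\pi(s)=\prod_{j=1}^{m}(s-\beta_j)$ and $Q(s)=\pi(s)^2/\pi(0)^2$, so that $Q\ge 0$, $Q(0)=1$, $Q(\beta_j)=0$, and $(1-Q(s))/s$ is a polynomial of degree $2m-1$. Exactness of the $m$-point Gauss rule for $\ud\tilde\rho=\tfrac12 s^{1/2}\,\ud s$ then gives
\[
\sum_{j=1}^{m} w_j=\sum_{j=1}^{m}\tilde w_j\,\frac{1-Q(\beta_j)}{\beta_j}
=\int_0^1\frac{1-Q(s)}{s}\,\ud\tilde\rho(s)
=1-\frac12\int_0^1 Q(s)\,s^{-1/2}\,\ud s<1,
\]
which is precisely the required bound $a_{l-1}>0$. (Alternatively, the Hermite-interpolation proof of the remainder formula goes through on $(0,1]$ because $\ud\tilde\rho$ integrates $1/s$.) With this patch your argument is complete and, on the admissibility constraints, more careful than the paper's own proof.
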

\begin{proof}
From (\ref{eq:orth_eq_equivlent}), the $k$-$r$-order orthogonal condition is equivalent to
\begin{align} \label{eq:Integral}
   \sum\limits_{j=1}^{k} ( D_{j}-D_{j-1} ) G_{j}^{\,\,i+1} = \int_{-1}^{1}x^{i+1} \ud x, \quad {\mathrm{for}} \,\, i=1,3,\dots,\,\, i\leq r,
\end{align}
where $D_{j}$ are defined by (\ref{eq:Ds}) and $G_{j}$ are defined by (\ref{eq:Gs}). Summing the coefficients of $G_{j}^{\,\,i+1}$, one get
\[
\sum\limits_{j=1}^{k} ( D_{j}-D_{j-1} )=D_{k}-D_{0}=2.
\]
Thus, $( D_{j}-D_{j-1} )$s in $(\ref{eq:Integral})$ could be a selection as the weights of a $k$ points quadrature on $\hat{K}$.

An appropriate selection of $G_{j}$ and $D_{j}$ can make a $k$-points integration rule being accurate for $\tilde{k}$-order polynomial space
(see \cite{Davis.1984}), where $\tilde{k}\in[k-1, \,2k-1]$.
In other words, $r+1$ could be any integer in $[k-1,\,2k-1]$ such that there exists at least one $\Pi_{h}^{k,*}$
satisfying the $k$-$r$-order orthogonal condition. Thus we complete the proof.
\end{proof}

%\begin{rem}\label{remark:orthgonal2D}
%In \cite{Wang.2016}, the interpolation nodes of $\Pi_{h}^{k,*}$ are chosen uniformly for 2D problems.
%In that case, $\{a_{j}\}_{j\in\mathbb{Z}_{l-1}}$ are fixed, and the $(k-1)$-order orthogonality is obtained only by restricting $\{\alpha_{j}\}_{j\in\mathbb{Z}_{j_{0}}}$.
%However, it is not necessary to choose uniform interpolation nodes of $\Pi_{h}^{k,*}$ in practice.
%What's important is that $\Pi_{h}^{k,*}$ serves only for the proof, and the choice of the interpolation nodes of $\Pi_{h}^{k,*}$ has no effect on the FVE schemes.
%So we can restrict $\{a_{j}\}_{j\in\mathbb{Z}_{l-1}}$ and $\{\alpha_{j}\}_{j\in\mathbb{Z}_{j_{0}}}$ together for a better orthogonality.
%\end{rem}

\begin{rem}\label{rem:orth}
It follows from (\ref{eq:orth_eq_equivlent}) that, for odd $r$, the $k$-$r$-order orthogonal condition is equivalent to the $k$-$(r+1)$-order orthogonal condition.
Thus, for all odd $k$-order FVE schemes with symmetric dual meshes, the $k$-$(k-1)$-order orthogonal condition is always satisfied.
While for the even $k$-order ($k=2l$) FVEM, if the $k$-$(k-1)$-order orthogonal condition is satisfied, the $k$-$k$-order orthogonal condition holds naturally.
\end{rem}

\subsection{The modified M-decomposition}
A proper {\em superclose function} $u_I$, which bridges the exact solution $u$ and the numerical solution $u_h$ at superconvergent points, is very important in the superconvergence analysis of the FEM/FVEM.
The M-decomposition technique (see \cite{Chen.1995,Cockburn.2017}) works well in constructing the appropriate superclose functions for the FEM, however it usually fails to be applied to the FVEM directly. For this reason, we propose the modified M-decomposition (MMD) technique to obtain an appropriate superclose function $u_I$ for the FVEM.

The M-functions on $\hat{K}=[-1,1]$, obtained by the integral of Legendre polynomials, are given by
\[
\hat{M}_{0}=1,\,\, \hat{M}_{1}=\xi,  \,\, \hat{M}_{2}=\frac{1}{2}(\xi^2-1), \,\, \dots,\,\, \hat{M}_{i+1}=\frac{1}{2^i\,i}\frac{\ud^{i-1}}{\ud \xi^{i-1}}(\xi^2-1)^{i},\,\,\dots,
\]
with properties% \cite{Chen.1995,Cockburn.2017}:
\begin{align*}
\left\{
\begin{array}{ll}
\hat{M}_{i}(\pm 1)=0,       & i=2,3,\dots,\\
(\hat{M}_{i},\hat{M}_j)=0,  & i\not= j\pm2.
\end{array}
\right.
\end{align*}

Suppose that $u\in H_{0}^{1}\cap H^{k+2}(\Omega)$ is the solution to (\ref{eq:BVP1D}), $u_{I}\in U_{h}^{k}$ to be determined later is a piecewise $k$-order approximation of $u$,
and $w_{I}\in U_{h}^{k}$ is an arbitrary piecewise $k$-order polynomial. Decompose $u$, $u_I$ and $w_{I}$ on an element $K\in \mathcal{T}_{h}$
with M-polynomials (\cite{Chen.1995,Cockburn.2017}).
\begin{align}
% \nonumber to remove numbering (before each equation)
  u   =& \sum_{i=0}^{k} b_{i,K}^{u} M_{i} +b_{k+1}^{u} M_{k+1} + \sum_{i=k+2}^{\infty} b_{i,K}^{u} M_{i},
        \label{eq:u_decompose}\\
  u_I =& \sum_{i=0}^{k} b_{i,K}^{I} M_{i},  \label{eq:uI_decompose}\\
  w_I =&  \sum_{i=0}^{k} b_{i,K}^{w} M_{i}. \label{eq:wI_decompose}
\end{align}
Here, $M_{i}(x) = \hat{M}_{i}(\xi)$ ($x=x(\xi)$) is the $i$-order M-polynomial defined on $K$, and $\hat{M}_{i}(\xi)$ is the $i$-order M-polynomial on the reference element $\hat{K}$.
$b_{i,K}^{u}$ can be determined by $u$, and $b_{i,K}^{u}=O(h^{i})$ $(i\leq k+2)$ (\cite{Chen.1995,Cockburn.2017}).
Hereinafter, we omit the subscript $K$ without causing confusion.

An appropriate superclose function $u_I\in U_{h}^{k}$ should satisfy the following properties:
\begin{itemize}
  \item For superconvergence of the derivative, 1) $u_I'$ is superclose to $u'$ at the derivative superconvergent points, with order $O(h^{k+1})$; 2) $\|u_h-u_I\|_{1}=O(h^{k+1})$;

  \item For superconvergence of the function value, 1) $u_I$ is superclose to $u$ at the derivative superconvergent points, with order $O(h^{k+2})$; 2) $\|u_h-u_I\|_{0}=O(h^{k+2})$.
\end{itemize}
Generally speaking, these two desired superclose functions are consistent and could be a same function.
%Here, we present a modified M-decomposition to acquire a proper superclose function $u_I$ for the FVEM.
\begin{definition}[The modified M-decomposition (MMD) constrains]
The modified M-decomposition constraints on $K$ are given by $(\ref{eq:MMD_odd1})$-$(\ref{eq:MMD_odd2})$
and $(\ref{eq:MMD_even1})$-$(\ref{eq:MMD_even2})$.

For odd $k$-order ($k=2l-1$) FVE schemes,
    \begin{subequations}
    \begin{eqnarray}
%      &b_{i}^{I} = b_{i}^{u},&  i=0,1,          \label{eq:MMD_odd1}    \\
%      &b_{i}^{I} = b_{i}^{u},&  i=3,5,\dots, k, \label{eq:MMD_odd2} \\
      &b_{i}^{I} = b_{i}^{u},&  i\in\{0,1\}\cup\{3,5,\dots,k\},          \label{eq:MMD_odd1}    \\
      &\sum\limits_{t=1}^{l-1} b_{2t}^{*} \hat{M}_{2t}'(G_{m}) +b_{k+1}^{u} \hat{M}_{k+1}'(G_{m}) = 0, &   m\in\{1,2,\dots,l-1\},                    \label{eq:MMD_odd2}%\\
%      &\int_{K} (\sum\limits_{t=1}^{l-1} b_{2t}^{*} M_{2t} +b_{k+1}^{u} M_{k+1})''\, \Pi_{h}^{k,*} M_{j} \ud x = 0, &   j= 2,4,\dots,k-1,                    \label{eq:MMD_odd3}
    \end{eqnarray}
    \end{subequations}

For even $k$-order ($k=2l$) FVE schemes,
    \begin{subequations}
    \begin{eqnarray}
%      &b_{i}^{I} = b_{i}^{u},&  i=0,1,          \label{eq:MMD_even1}    \\
%      &b_{i}^{I} = b_{i}^{u},&  i=2,4,\dots, k, \label{eq:MMD_even2} \\
%      &\int_{K} (\sum\limits_{t=2}^{l}b_{2t-1}^{*} M_{2t-1} +b_{k+1}^{u} M_{k+1})''\, \Pi_{h}^{k,*} M_{j} \ud x = 0, &   j=3,5,\dots,k-1,                    \label{eq:MMD_even3}
      &b_{i}^{I} = b_{i}^{u},&  i\in\{0,1\}\cup\{2,4,\dots, k\},          \label{eq:MMD_even1}    \\
%      &b_{i}^{I} = b_{i}^{u},&  i=2,4,\dots, k, \label{eq:MMD_even2} \\
      &\sum\limits_{t=2}^{l}b_{2t-1}^{*} \hat{M}_{2t-1}'(G_{m}) +b_{k+1}^{u} \hat{M}_{k+1}'(G_{m}) = 0, &   m\in\{1,2,\dots,l-1\}.                    \label{eq:MMD_even2}
    \end{eqnarray}
    \end{subequations}
%\end{itemize}
Here, $b_{j}^{*} = b_{j}^{u}-b_{j}^{I}$ $(j=1,2,\dots)$.
\end{definition}

\begin{rem}
We can obtain the coefficients $b_{i}^{I}$ of $u_I$, by decomposing $u$ for $b_{i}^{u}$ first, and then solving the MMD constraints (\ref{eq:MMD_odd1})-(\ref{eq:MMD_odd2})
or (\ref{eq:MMD_even1})-(\ref{eq:MMD_even2}).
However, we only care about the properties of $u_{I}$ instead of $u_{I}$ itself in the proof.
So, we don't have to get the exact values of $b_{i}^{I}$ in practice.
\end{rem}

\begin{lem} \label{lem:MMD}
Let $u\in H_{0}^{1}\cap H^{k+2}(\Omega)$. If $u_I$ satisfies the \textbf{MMD} constraints,
the error of $u_I$ approximating $u$ can be estimated by
\begin{align}
\|u-u_I\|_{1} \lesssim h^{k}\|u\|_{k+1},    \label{eq:u_uI_H1}        \\
\|u-u_I\|_{0} \lesssim h^{k+1}\|u\|_{k+1}.  \label{eq:u_uI_L2}
\end{align}
%\begin{itemize}
%  \item For odd $k$ ($k=2l-1$),
%    \begin{subequations}
%    \begin{align}
%      &b_{i,K}^{I} = b_{i,K}^{u},&  i=0,1,          \label{eq:MMD_odd1}    \\
%      &b_{i,K}^{I} = b_{i,K}^{u},&  i=3,5,\dots, k, \label{eq:MMD_odd2} \\
%      &\int_{-1}^{1} (\sum\limits_{t=1}^{l-1} b_{2t,K}^{*} M_{2t} +b_{k+1}^{u} M_{k+1})''\, \Pi_{h}^{k,*} M_{j} \ud \xi = 0, &   j= 2,4,\dots,k-1,                    \label{eq:MMD_odd3}
%    \end{align}
%    \end{subequations}
%  \item For even $k$ ($k=2l$),
%    \begin{subequations}
%    \begin{align}
%      &b_{i,K}^{I} = b_{i,K}^{u},&  i=0,1,          \label{eq:MMD_even1}    \\
%      &b_{i,K}^{I} = b_{i,K}^{u},&  i=2,4,\dots, k, \label{eq:MMD_even2} \\
%      &\int_{-1}^{1} (\sum\limits_{t=2}^{l}b_{2t-1,K}^{*} M_{2t-1} +b_{k+1}^{u} M_{k+1})''\, \Pi_{h}^{k,*} M_{j} \ud \xi = 0, &   j=3,5,\dots,k-1,                    \label{eq:MMD_even3}
%    \end{align}
%    \end{subequations}
%\end{itemize}
%Here, $b_{j,K}^{*} = b_{j,K}^{u}-b_{j,K}^{I}$ $(j=1,2,\dots)$.
\end{lem}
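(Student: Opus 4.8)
The plan is to estimate $\|u-u_I\|_0$ and $\|u-u_I\|_1$ elementwise by controlling the M-polynomial coefficients $b_i^* = b_i^u - b_i^I$ of the difference $u-u_I$ on each primary element $K$. The starting observation is that the MMD constraints force the difference to vanish at low order: from (\ref{eq:MMD_odd1}) (or (\ref{eq:MMD_even1})) we have $b_i^* = 0$ for the listed indices, which in both the odd and even cases includes $i=0$ and $i=1$ together with all the ``other-parity'' indices up through $k$. Consequently, on $K$ the restricted difference $(u-u_I)|_K$ has an M-expansion whose only surviving terms with index $\le k$ are a small set of free coefficients (the $b_{2t}^*$ for odd $k$, or $b_{2t-1}^*$ for even $k$), plus the tail $\sum_{i\ge k+1} b_i^u M_i$ coming from $u$ itself. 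Since $b_i^u = O(h^i)$ for $i\le k+2$, the tail is already $O(h^{k+1})$ and is harmless for both estimates.

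The crux is therefore to bound the remaining free coefficients $b_j^*$. First I would count: for odd $k=2l-1$ the free coefficients are $b_2^*, b_4^*, \dots, b_{k-1}^*$, which is $l-1$ unknowns, and (\ref{eq:MMD_odd2}) provides exactly $l-1$ linear equations (one for each $m\in\{1,\dots,l-1\}$); the even case is identical with $b_3^*, b_5^*, \dots, b_{k-1}^*$. So the MMD system is square. The key step is to read (\ref{eq:MMD_odd2}) as a linear system whose right-hand side is $-b_{k+1}^u \hat M_{k+1}'(G_m) = O(h^{k+1})$ and whose coefficient matrix is $\big(\hat M_{2t}'(G_m)\big)_{m,t}$ (respectively $\big(\hat M_{2t-1}'(G_m)\big)_{m,t}$). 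Provided this matrix is nonsingular — a property of the fixed reference nodes $G_m$ that is independent of $h$ — Cramer's rule gives $b_j^* = O(h^{k+1})$ for every free index $j$. I expect the main obstacle to be exactly this invertibility: one must argue that the $(l-1)\times(l-1)$ matrix of M-polynomial derivatives evaluated at the dual points $G_1,\dots,G_{l-1}$ is nonsingular. This should follow because the $\hat M_i'$ are (up to normalization) Legendre polynomials of distinct degrees and the $G_m$ are distinct interior points, so a Vandermonde-type or Wronskian argument rules out a nontrivial kernel; but making this rigorous for arbitrary symmetric dual-point placement is the delicate point.

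Once $b_j^* = O(h^{k+1})$ is established, the two norm estimates follow by assembling the elementwise bounds. On each $K$ one has
\begin{align*}
(u-u_I)|_K = \sum_{j\ \mathrm{free}} b_j^* M_j + \sum_{i\ge k+1} b_i^u M_i,
\end{align*}
and I would use the scaling of the M-polynomials under the affine map $\hat K\to K$ together with $\|M_j\|_{0,K}\lesssim h^{1/2}$ and $|M_j|_{1,K}\lesssim h^{-1/2}$ (the factor $h^{1/2}$ from the measure of $K$, the inverse power from differentiation). With $b_j^*, b_{k+1}^u = O(h^{k+1})$ this yields $\|u-u_I\|_{0,K}\lesssim h^{k+3/2}\|u\|_{k+1,K}$ and $|u-u_I|_{1,K}\lesssim h^{k+1/2}\|u\|_{k+1,K}$ on the single element. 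Summing the squares over the $O(h^{-1})$ elements and taking square roots absorbs one factor of $h^{1/2}$, giving the global bounds $\|u-u_I\|_0\lesssim h^{k+1}\|u\|_{k+1}$ and $\|u-u_I\|_1\lesssim h^{k}\|u\|_{k+1}$, which are (\ref{eq:u_uI_L2}) and (\ref{eq:u_uI_H1}). A small point to handle carefully is that (\ref{eq:u_uI_H1}) loses one order relative to the naive count because the $H^1$ seminorm reinstates the $h^{-1/2}$; this is consistent with the stated $h^k$ and requires no special treatment beyond the scaling bookkeeping.
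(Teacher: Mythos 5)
Your proposal follows essentially the same route as the paper's proof: decompose $u-u_I$ on each element into the few surviving M-coefficients plus the high-order tail, read the MMD constraints (\ref{eq:MMD_odd2}) (resp.\ (\ref{eq:MMD_even2})) as a square $(l-1)\times(l-1)$ linear system with right-hand side $-b_{k+1}^{u}\hat M_{k+1}'(G_m)=O(h^{k+1})$, invert the $h$-independent coefficient matrix to get $b_j^{*}=O(h^{k+1})$, and finish by affine-scaling bookkeeping. The invertibility you flag as the delicate point is handled at the same level of rigor in the paper, which simply asserts from the properties of the M-functions that $\mathbf{B}$ is invertible with $O(1)$ inverse, so there is no substantive gap relative to the paper's own argument.
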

\begin{proof}
From (\ref{eq:MMD_odd1}) and (\ref{eq:MMD_even1}), one can write the difference between $u$ and $u_I$ on $K$ as
\begin{align}
% \nonumber to remove numbering (before each equation)
  u - u_I  = R_{K}+r_{K},   \label{eq:u_uI_decomposition}
\end{align}
where
\begin{align*}
% \nonumber to remove numbering (before each equation)
  R_{K} =& \left\{ \begin{array}{ll}
  \sum\limits_{t=1}^{l-1} b_{2t}^{*} M_{2t} +b_{k+1}^{u} M_{k+1} ,  &   k=2l-1,\\
  \sum\limits_{t=2}^{l} b_{2t-1}^{*} M_{2t-1} +b_{k+1}^{u} M_{k+1} ,  &   k=2l,\\
\end{array}
  \right.\\
%  r_{K} =& \left\{ \begin{array}{ll}
%  \sum\limits_{i=k+2}^{\infty} b_{i}^{u} M_{i},  &   k=2l-1,\\
%  \sum\limits_{i=k+2}^{\infty} b_{i}^{u} M_{i},  &   k=2l.\\
%\end{array}
%  \right.
  r_{K} =& \sum\limits_{i=k+2}^{\infty} b_{i}^{u} M_{i},  \qquad\qquad   k=2l-1\,\, (\mathrm{or}\,\,  k=2l).
\end{align*}
It's easy to verify that
\begin{align}
r_{K}=O(h^{k+2}),\quad \mathrm{and}\quad r_{K}'=(h^{k+1}).
\end{align}
Thus, the remaining work is to prove $R_{K}=O(h^{k+1})$ and $R_{K}'=O(h^{k})$.

For odd $k=2l-1$, we rewrite (\ref{eq:MMD_odd2}) into the matrix form
%\begin{align*}
%\sum\limits_{t=1}^{l-1} b_{2t}^{*} \hat{M}_{2t}'(G_{m}) +b_{k+1}^{u} \hat{M}_{k+1}'(G_{m}) = 0, \quad   m= 1,2,\dots,l-1,
%\end{align*}
%That is to say,
\begin{align*}
\mathbf{B} \, \mathbf{b}^{*} = b_{k+1}^{u} \, \mathbf{f}_{M},
\end{align*}
where
\begin{align*}
\mathbf{B} =&
\left(
  \begin{array}{cccc}
    \hat{M}_{2}'(G_{1}) & \hat{M}_{4}'(G_{1}) & \dots & \hat{M}_{2(l-1)}'(G_{1}) \\
    \hat{M}_{2}'(G_{2}) & \hat{M}_{4}'(G_{2}) & \dots & \hat{M}_{2(l-1)}'(G_{2}) \\
    \dots & \dots & \dots & \dots \\
    \hat{M}_{2}'(G_{l-1}) & \hat{M}_{4}'(G_{l-1}) & \dots & \hat{M}_{2(l-1)}'(G_{l-1}) \\
  \end{array}
\right)_{(l-1)\times (l-1)}, \\
\mathbf{b}^{*} =& (\,b_{2,K}^{*},\, b_{4,K}^{*}, \dots,\, b_{2(l-1),K}^{*}\,)^{T}_{1\times(l-1)}, \\
\mathbf{f}_{M} =& -(\, \hat{M}_{2(l+1)}'(G_{1}),\,  \hat{M}_{2(l+1)}'(G_{2}), \dots,\, \hat{M}_{2(l+1)}'(G_{l-1}) \,)^{T}_{1\times(l-1)}.
\end{align*}
Here, $\hat{M}_{2i}$ ($i=1,2,\dots,l-1$) and $\hat{M}_{k+1}$ are linearly independent M-functions on the reference element $\hat{K}$.
From the properties of M-functions, we can conclude that, $\mathbf{B}$ is invertible and the elements of $\mathbf{B}^{-1}$ and $\mathbf{f}_{M}$ are $O(1)$,
which are independent on $h$ and $K$.
%
%\begin{align}\label{eq:RK_prime}
%\left(
%  \begin{array}{cccc}
%    M_{2}'(G_{1}) & M_{4}'(G_{1}) & \dots & M_{2(l-1)}'(G_{1}) \\
%    M_{2}'(G_{2}) & M_{4}'(G_{2}) & \dots & M_{2(l-1)}'(G_{2}) \\
%    \vdots & \vdots & \vdots & \vdots \\
%    M_{2}'(G_{l-1}) & M_{4}'(G_{l-1}) & \dots & M_{2(l-1)}'(G_{l-1}) \\
%  \end{array}
%\right)\,
%\left(
%  \begin{array}{c}
%   b_{2}^{*}\\
%   b_{4}^{*}\\
%   \vdots\\
%   b_{2(l-1)}^{*} \\
%   \end{array}
%\right)
%= b_{k+1}^{u}\,
%\left(
%  \begin{array}{c}
%   M_{k+1}'(G_{1})\\
%   M_{k+1}'(G_{2})\\
%   \vdots\\
%   M_{k+1}'(G_{l-1}) \\
%   \end{array}
%\right)
%\end{align}
%
%\begin{align*}
%\left(
%  \begin{array}{c}
%   b_{2}^{*}\\
%   b_{4}^{*}\\
%   \vdots\\
%   b_{2(l-1)}^{*} \\
%   \end{array}
%\right)
%= b_{k+1}^{u}\,
%\left(
%  \begin{array}{cccc}
%    \hat{M}_{2}'(G_{1}) & \hat{M}_{4}'(G_{1}) & \dots & \hat{M}_{2(l-1)}'(G_{1}) \\
%    \hat{M}_{2}'(G_{2}) & \hat{M}_{4}'(G_{2}) & \dots & \hat{M}_{2(l-1)}'(G_{2}) \\
%    \vdots & \vdots & \vdots & \vdots \\
%    \hat{M}_{2}'(G_{l-1}) & \hat{M}_{4}'(G_{l-1}) & \dots & \hat{M}_{2(l-1)}'(G_{l-1}) \\
%  \end{array}
%\right)^{-1}
%\left(
%  \begin{array}{c}
%   \hat{M}_{k+1}'(G_{1})\\
%   \hat{M}_{k+1}'(G_{2})\\
%   \vdots\\
%   \hat{M}_{k+1}'(G_{l-1}) \\
%   \end{array}
%\right)
%\end{align*}
Thus we have
\[
 \mathbf{b}^{*} = b_{k+1}^{u} \, \mathbf{B}^{-1}\, \mathbf{f}_{M}.
\]
Further more, since $b_{k+1}^{u}=O(h^{k+1})$, one can get
\[
b_{2t}^{*} = b_{k+1}^{u} \, O(1) = O(h^{k+1}),\quad t=1,\dots,(l-1),
\]
which leads to
\begin{align} \label{eq:RK_estimate}
R_{K}=O(h^{k+1}),\quad \mathrm{and}\quad R_{K}'=O(h^{k}).
\end{align}

Similar results to (\ref{eq:RK_estimate}) can be obtained for even $k$.
\end{proof}

Considering the symmetry of the M-functions on $\hat{K}$, (\ref{eq:MMD_odd2}) and (\ref{eq:MMD_even2}) give
\begin{align}
\sum\limits_{t=1}^{l-1} b_{2t}^{*} \hat{M}_{2t}'(G_{m}) +b_{k+1}^{u} \hat{M}_{k+1}'(G_{m}) =& 0, \quad   m\in\{1,2,\dots,k\}, \quad \mathrm{for}\,\,k=2l-1,          \label{eq:MMD_odd2_full}            \\
\sum\limits_{t=2}^{l} b_{2t-1}^{*} \hat{M}_{2t-1}'(G_{m}) +b_{k+1}^{u} \hat{M}_{k+1}'(G_{m}) =& 0, \quad   m\in\{1,2,\dots,k\}, \quad \mathrm{for}\,\,k=2l.          \label{eq:MMD_even2_full}
\end{align}
It follows from the linear affine mapping from $\hat{K}$ to $K$ and (\ref{eq:u_uI_decomposition}) that
\[
R_{K}'(g_{i,j}) = 0.
\]
%That is to say, $g_{i,j}$s on $K=K_{i}$ are right the roots of $R_{K}'$.
Denote the $(k+1)$ roots of $R_{K}(x)$ by $\mathbb{P}_{0}=\{z_{K,j}^{0}: j=0,\dots,k \}$, and the $k$ roots of $R_K'(x)$ by $\mathbb{P}_{1}=\{g_{i,j}: j=1,\dots,k, \,\, K=K_i \}$.
Then one has $r_{K}'=O(h^{k+1})$ and
\[
u'(g_{i,j})-u_I'(g_{i,j}) = R_{K}'(g_{i,j}) + r_{K}'(g_{i,j}) = r_{K}'(g_{i,j}) = O(h^{k+1}).
\]

\begin{lem} \label{lem:superclose}
Under the same assumptions to Lemma~$\ref{lem:MMD}$, $u_I'$ is superclose to $u'$ on $\mathbb{P}_{1}$, and $u_I$ is superclose to $u$ on $\mathbb{P}_{0}$. That is
\begin{align}
\max_{z\in\mathbb{P}_{1}} |u'(z)-u_I'(z)| \lesssim h^{k+1}\|u\|_{k+2},    \label{eq:u_uI_derivative}    \\
\max_{z\in\mathbb{P}_{0}} |u(z)-u_I(z)| \lesssim h^{k+2}\|u\|_{k+2}.      \label{eq:u_uI_functionvalue}
\end{align}
\end{lem}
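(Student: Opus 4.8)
The plan is to read off both estimates directly from the elementwise splitting $u-u_I=R_K+r_K$ established in \eqref{eq:u_uI_decomposition}, exploiting the crucial fact that the ``bad'' part $R_K$ has been engineered, via the MMD constraints, to vanish precisely on the two prescribed point sets. By construction $\mathbb{P}_0$ is the set of the $k+1$ roots of $R_K$ and $\mathbb{P}_1$ is the set of the $k$ roots of $R_K'$, so on these sets the error collapses onto the genuinely higher-order tail $r_K$ (respectively $r_K'$). Since Lemma~\ref{lem:MMD} already supplies $r_K=O(h^{k+2})$ and $r_K'=O(h^{k+1})$, what remains is to evaluate at the roots and to upgrade these order symbols to the quantitative bounds carrying $\|u\|_{k+2}$.

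First I would treat the derivative estimate \eqref{eq:u_uI_derivative}. The supercloseness points $g_{i,j}\in\mathbb{P}_1$ are exactly the zeros of $R_K'$: this is the content of \eqref{eq:MMD_odd2_full}--\eqref{eq:MMD_even2_full}, which after the affine map from $\hat{K}$ to $K$ yield $R_K'(g_{i,j})=0$ at every dual point. Differentiating \eqref{eq:u_uI_decomposition} and evaluating at $g_{i,j}$ then gives $u'(g_{i,j})-u_I'(g_{i,j})=R_K'(g_{i,j})+r_K'(g_{i,j})=r_K'(g_{i,j})$, exactly as observed just above the statement. Writing $r_K=\sum_{i\ge k+2}b_i^u M_i$, the leading contribution to $r_K'$ is $b_{k+2}^u M_{k+2}'$; since the affine chain rule makes $M_{k+2}'=O(h^{-1})$ while $b_{k+2,K}^u\lesssim h^{k+2}\|u\|_{k+2}$ locally, I would obtain $|r_K'(g_{i,j})|\lesssim h^{k+1}\|u\|_{k+2}$, and taking the maximum over the elements produces \eqref{eq:u_uI_derivative}.

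The function-value estimate \eqref{eq:u_uI_functionvalue} is entirely analogous but one power of $h$ better, because no differentiation is involved. For $z\in\mathbb{P}_0$ one has $R_K(z)=0$ by definition of the root set, so \eqref{eq:u_uI_decomposition} reduces to $u(z)-u_I(z)=r_K(z)$. Here the M-functions satisfy $M_{k+2}=O(1)$ on $K$, so together with $b_{k+2,K}^u\lesssim h^{k+2}\|u\|_{k+2}$ the leading term already gives $|r_K(z)|\lesssim h^{k+2}\|u\|_{k+2}$, and maximizing over elements yields \eqref{eq:u_uI_functionvalue}.

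The genuinely technical point --- indeed the only step beyond bookkeeping --- is the passage from the order estimates $r_K=O(h^{k+2})$, $r_K'=O(h^{k+1})$ to the quantitative bounds carrying $\|u\|_{k+2}$. This rests on the sharp M-coefficient estimate $b_{i,K}^u\lesssim h^{i}\|u\|_{k+2}$ for $i\ge k+2$ (equivalently, that $b_{k+2}^u$ is controlled by the $(k+2)$-nd derivative of $u$), supplied by the M-decomposition theory of \cite{Chen.1995,Cockburn.2017}, together with the appropriate scaling argument converting this control into the pointwise bounds required on $\mathbb{P}_0$ and $\mathbb{P}_1$, and a routine check that the infinite tail $\sum_{i>k+2}$ contributes only higher-order terms. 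As this is all standard one-dimensional M-decomposition machinery, I expect no essential difficulty here, only careful tracking of the powers of $h$.
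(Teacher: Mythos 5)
Your proposal is correct and follows essentially the same route as the paper: the paper likewise reads both estimates off the splitting $u-u_I=R_K+r_K$ of \eqref{eq:u_uI_decomposition}, uses the symmetrized MMD constraints \eqref{eq:MMD_odd2_full}--\eqref{eq:MMD_even2_full} and the affine map to conclude $R_K'(g_{i,j})=0$ (and $R_K(z)=0$ on $\mathbb{P}_0$ by definition of the root set), so that the error at these points collapses to $r_K'$ respectively $r_K$, which are then bounded via the M-coefficient estimates $b_{i,K}^u=O(h^i)$. The only cosmetic difference is that you spell out the scaling $M_{k+2}'=O(h^{-1})$ and the tail bookkeeping slightly more explicitly than the paper, which simply asserts $r_K=O(h^{k+2})$ and $r_K'=O(h^{k+1})$.
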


\begin{rem} \label{rem:correction}
The remaining thing is that the above $u_I$ defined element by element might not be continuous on $\Omega$. However, from lemma~\ref{lem:superclose}, $u-u_I=O(h^{k+2})$ on the endpoints of each $K\in\mathcal{T}_h$. Thus, we can simply use a high order correction $r_{\mathrm{corr}}=O(h^{k+2})$, which does not necessarily to be continuous on $\Omega$, to obtain a continuous $\tilde{u}_I$, such that $u-\tilde{u}_I=0$ on the endpoints of $K$ and
\[
\tilde{u}_{I} = u_I + r_{\mathrm{corr}},
\]
which inherits the superclose properties of $u_I$. And, on $K\in\mathcal{T}_{h}$,
\[
u-\tilde{u}_{I} = R_{K}+\tilde{r}_K,
\]
where, $\tilde{r}_{K}=r_{K}-r_{\mathrm{corr}}$. And, $\tilde{r}_{K}$ inherits the properties $\tilde{r}_{K}'=O(h^{k+1})$ and $\tilde{r}_{K}=O(h^{k+2})$ from $r_K$.

In the following analysis, we'll still write $\tilde{u}_I$ and $\tilde{r}_{K}$ as $u_I$ and $r_{K}$ without causing confusion.

\end{rem}

%\section{L2}
%%\section{Superconvergence and Utral-Superconvergence}
\section{Superconvergence}
\label{sec:Superconvergence}

Lemma~\ref{lem:superclose} presents the supercloseness between the exact solution $u$ and its approximation $u_I$. In this section, we prove the global superconvergence properties that $\|u_h-u_I\|_{1}=O(h^{k+1})$ and $\|u_h-u_I\|_{0}=O(h^{k+2})$. Then, the natural superconvergence results follow natually, that the numerical solution $u_h'$ superconverges to $u'$ on the derivative superclose points in $\mathbb{P}_{1}$ with $(k+1)$-order, and $u_h$ superconverges to $u$ on the function value superclose points in $\mathbb{P}_{0}$ with $(k+2)$-order.

%\begin{itemize}
%  \item first, construct a piecewise $k$-order $u_I\in U_{h}^{k}$, which is superclose to $u$ at superconvergent points;
%  \item second, prove $u_I-u_h$ converges in a higher order with $L^2$ norm or $H^1$ norm.
%\end{itemize}
%1) construct a piecewise $k$-order $u_I\in U_{h}^{k}$, which is superclose to $u$ at superconvergent points; 2) prove $u_I-u_h$ converges in a higher order with $L^2$ norm or $H^1$ norm.
\subsection{Superconvergence of the derivative}
\begin{thm}[Superconvergence of the derivative] \label{thm:super_H1}
Let $u\in H_{0}^{1}\cap H^{k+2}(\Omega)$ be the solution of $(\ref{eq:BVP1D})$,  and $\mathcal{T}_{h}$ is regular. For $k$-order Lagrange trial function space $U_{h}^{k}$, choose $\mathcal{T}_{h}^{*}$ satisfying the $k$-$(k-1)$-order orthogonal condition. For the $u_I\in U_{h}^k$ satisfying the MMD constraints $(\ref{eq:MMD_odd1})$-$(\ref{eq:MMD_odd2})$ or $(\ref{eq:MMD_even1})$-$(\ref{eq:MMD_even2})$, we have the weak estimate of the first type %there exists a positive constant $C$ such that
\begin{equation} \label{eq:Weakestimate_thm}
%\| u-u_{h}\|_{0}\leq \mathit{C}h^{k+1} (\|A\|_{0,\infty}+|A|_{1,\infty})\|u\|_{k+2}.
|a_{h}(u-u_I, \Pi_{h}^{k,*}w_{I})| \leq \mathit{C}h^{k+1} \|u\|_{k+2} \| w_{I} \|_{1},
\quad \forall\, w_{I}\in U_{h}^{k}.
\end{equation}
Consequently,
\begin{equation} \label{eq:Superconv1_thm}
%\| u-u_{h}\|_{0}\leq \mathit{C}h^{k+1} (\|A\|_{0,\infty}+|A|_{1,\infty})\|u\|_{k+2}.
\| u_{h}-u_{I}\|_{1}\leq \mathit{C}h^{k+1} \|u\|_{k+2}.
\end{equation}
\end{thm}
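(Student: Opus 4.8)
My plan is to establish the weak estimate (\ref{eq:Weakestimate_thm}) first and then deduce (\ref{eq:Superconv1_thm}) from it by coercivity. I would begin by splitting $a_h(u-u_I,\Pi_h^{k,*}w_I)$ according to the two groups of terms in the definition of $a_h$: the diffusion part $D=\sum_{i,j}[(\Pi_h^{k,*}w_I)_{i,j}]\,p(g_{i,j})(u-u_I)'(g_{i,j})$, which samples $(u-u_I)'$ only at the dual points, and the lower-order part $L=\int_\Omega[q(u-u_I)'+r(u-u_I)]\,\Pi_h^{k,*}w_I\,\ud x$. The diffusion part is where the MMD pays off, while the lower-order part is where the orthogonal condition is used to discard the leading contributions.

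For the diffusion part I would invoke the MMD decomposition $u-u_I=R_K+r_K$ on each $K$. Because the MMD constraints (\ref{eq:MMD_odd2_full})--(\ref{eq:MMD_even2_full}) force $R_K'(g_{i,j})=0$ at every dual point, only $r_K'$ survives, so $(u-u_I)'(g_{i,j})=r_K'(g_{i,j})=O(h^{k+1})$ uniformly; this is precisely the content of Lemma~\ref{lem:superclose} on $\mathbb{P}_1$. Factoring out this pointwise bound, it remains to control $\sum_{i,j}|[(\Pi_h^{k,*}w_I)_{i,j}]|=\sum_{i,j}|w_I(d_{i,j})-w_I(d_{i,j-1})|$; a mean-value estimate on each $K_i$ gives $|w_I(d_{i,j})-w_I(d_{i,j-1})|\lesssim h_i\|w_I'\|_{L^\infty(K_i)}$, and an inverse inequality followed by Cauchy--Schwarz and $\sum_i h_i=1$ bounds the whole sum by $\|w_I\|_1$. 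Hence $|D|\lesssim h^{k+1}\|u\|_{k+2}\|w_I\|_1$.

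For the lower-order part I would treat the reaction term $\int_\Omega r(u-u_I)\Pi_h^{k,*}w_I\,\ud x$ by the $L^2$-supercloseness $\|u-u_I\|_0\lesssim h^{k+1}\|u\|_{k+1}$ of Lemma~\ref{lem:MMD} together with the $L^2$-stability of $\Pi_h^{k,*}$. The convection term $\int_\Omega q(u-u_I)'\Pi_h^{k,*}w_I\,\ud x$ is the delicate one, since $\|(u-u_I)'\|_0$ is only $O(h^k)$. I would write $\Pi_h^{k,*}w_I=w_I+(\Pi_h^{k,*}w_I-w_I)$: on the smooth piece I integrate by parts element by element to move the derivative off $u-u_I$, the interface and endpoint terms vanishing because $u-u_I=0$ at the endpoints of every $K$ after the correction of Remark~\ref{rem:correction}, which reduces this piece to $\int_K(u-u_I)(qw_I)'\,\ud x$ and thus to $O(h^{k+1})$; on the remaining piece the approximation property $\|\Pi_h^{k,*}w_I-w_I\|_0\lesssim h\|w_I\|_1$ of the piecewise-constant projection, sharpened where needed by the orthogonal condition (\ref{eq:orth_condition1D}) to eliminate the leading polynomial part, combines with $\|(u-u_I)'\|_0\lesssim h^k$ to give $O(h^{k+1})$. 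Collecting $D$ and $L$ yields (\ref{eq:Weakestimate_thm}).

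Finally, for (\ref{eq:Superconv1_thm}) I would use the consistency of the scheme: integrating (\ref{eq:BVP1D}) over the control volumes shows $a_h(u,v_h)=(f,v_h)$ for all $v_h\in V_h$, so $a_h(u-u_h,v_h)=0$ and therefore $a_h(u_h-u_I,v_h)=a_h(u-u_I,v_h)$. Taking $v_h=\Pi_h^{k,*}(u_h-u_I)$ and invoking the coercivity $\|u_h-u_I\|_1^2\lesssim a_h(u_h-u_I,\Pi_h^{k,*}(u_h-u_I))$ from Appendix~A, the weak estimate with $w_I=u_h-u_I$ gives $\|u_h-u_I\|_1^2\lesssim h^{k+1}\|u\|_{k+2}\|u_h-u_I\|_1$, which is (\ref{eq:Superconv1_thm}). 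I expect the main obstacle to be the convection term: since $(u-u_I)'$ carries only order $h^k$, one must genuinely exploit both the vanishing of $R_K'$ at the dual points and the orthogonal condition to recover the missing power of $h$, and keeping the integration-by-parts interface and boundary terms under control via the $\tilde u_I$ correction is the technical crux.
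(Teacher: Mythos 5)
Your proof of the weak estimate (\ref{eq:Weakestimate_thm}) is essentially correct but follows a genuinely different route from the paper's. The paper inserts the piecewise linear interpolant $w_{I,1}=\Pi_{h}^{1}w_I$ and splits $a_h(u-u_I,\Pi_{h}^{k,*}w_I)=E_1+E_2+E_3$, killing the leading part of $E_1$ with the integral identity of Lemma~\ref{lem:RK_Piw} and the leading part of $E_2$ with the orthogonal condition (see (\ref{eq:Superconv_E2_0})); you instead split off the jump (diffusion) part and kill it pointwise, using $R_K'(g_{i,j})=0$ (Lemma~\ref{lem:superclose}) together with $\sum_{i,j}\bigl|[(\Pi_{h}^{k,*}w_I)_{i,j}]\bigr|\lesssim \|w_I\|_1$, which is valid: each jump is a difference of $w_I$ at two points of $\overline{K_i}$, so your mean-value/inverse-inequality/Cauchy--Schwarz chain goes through on any mesh. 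Your treatment of the lower-order terms is also fine, and in fact simpler than you suggest: for the convection term, Cauchy--Schwarz with $\|(u-u_I)'\|_0\lesssim h^k\|u\|_{k+1}$ (Lemma~\ref{lem:MMD}) and $\|\Pi_{h}^{k,*}w_I-w_I\|_0\lesssim h\|w_I\|_1$ already yields $O(h^{k+1})$, so no orthogonality is needed there, contrary to your closing remark. Where the hypothesis genuinely enters your argument is hidden inside Lemma~\ref{lem:superclose}: for even $k$, the constraints (\ref{eq:MMD_even2}) plus symmetry only force $\hat{R}_K'$ to vanish at $\pm\alpha_1,\dots,\pm\alpha_{l-1}$, and its vanishing at the remaining dual points $\pm\alpha_l$ (i.e., the validity of (\ref{eq:MMD_even2_full}) for all $m$) is precisely what the $k$-$(k-1)$-order orthogonal condition buys --- for $k=2$ one has $\hat{M}_3'(\xi)=(3\xi^2-1)/2$, which vanishes at the dual points only when $\alpha_1=1/\sqrt{3}$. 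So your proof does use the orthogonal condition, just not where you claim it does; this gives a cleaner picture of the mechanism than the paper's $E_2$ estimate, at the price of leaning on the unstated dependence of (\ref{eq:MMD_even2_full}) on that condition.

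The genuine gap is your final step. The coercivity $\|u_h-u_I\|_1^2\lesssim a_h(u_h-u_I,\Pi_{h}^{k,*}(u_h-u_I))$ that you attribute to Appendix~A is not proved there and is not available: Theorem~\ref{thm:infsup} establishes the inf-sup condition (\ref{eq:infsup}), and its proof deliberately pairs $w_h$ with the quadrature-based operator $\Pi_{\mathcal{T}}^{*}$ of (\ref{eq:PiT}), whose jumps are $A_{i,j}w_h'(g_{i,j})$, rather than with the interpolation operator $\Pi_{h}^{k,*}$; coercivity of $a_h(\cdot,\Pi_{h}^{k,*}\cdot)$ for general symmetric dual points is exactly the statement the appendix avoids having to prove. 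The step can be repaired along the paper's lines. By Galerkin orthogonality, $a_h(u_h-u_I,v_h)=a_h(u-u_I,v_h)$ for all $v_h\in V_h$. The operator $\Pi_{h}^{k,*}:U_h^k\to V_h$ is a bijection (a degree-$k$ polynomial vanishing at the $k+1$ nodes $d_{i,0},\dots,d_{i,k}$ vanishes identically), the jumps of $\Pi_{h}^{k,*}w_I$ are exactly the differences $w_I(d_{i,j})-w_I(d_{i,j-1})$ by continuity of $w_I$, and scaling to the reference element gives the norm equivalence $\|w_I\|_1\lesssim\|\Pi_{h}^{k,*}w_I\|_{\mathcal{T}_h^{*}}$. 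Hence every $v_h\in V_h$ equals $\Pi_{h}^{k,*}w_I$ with $\|w_I\|_1\lesssim\|v_h\|_{\mathcal{T}_h^{*}}$, so the weak estimate (\ref{eq:Weakestimate_thm}) bounds the supremum in (\ref{eq:infsup}) by $Ch^{k+1}\|u\|_{k+2}$ and (\ref{eq:Superconv1_thm}) follows. The paper compresses this into one sentence, but it invokes the proved inf-sup condition, not an unproved coercivity.
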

%From (\ref{eq:MMD_odd2_full}) and (\ref{eq:MMD_even2_full}), we have
In order to prove theorem~\ref{thm:super_H1}, we first prove the following lemma.
\begin{lem} \label{lem:RK_Piw}
For the difference between $u$ and $u_I$ on $K$ $(\ref{eq:u_uI_decomposition})$, we have
\begin{align} \label{eq:RK_Piw}
  \int_{-1}^{1} R_{K}''\, \Pi_{h}^{k,*} (w_I-\Pi_h^1 w_I) \ud \xi = 0.
\end{align}
\end{lem}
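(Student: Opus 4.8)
The plan is to reduce everything to the reference element $\hat K=[-1,1]$ and combine the two structural facts already in hand: that $R_K'$ vanishes at every dual point, and that $\Pi_h^{k,*}$ of anything is piecewise constant with breakpoints exactly at the dual points. Writing $\hat R$ for the reference image of $R_K$ (a combination of the M-polynomials $\hat M_i$, $i\ge2$) and setting $\phi:=w_I-\Pi_h^1 w_I$, the integrand factor $\Pi_h^{k,*}\phi$ is constant on each dual subinterval of $[-1,1]$, namely on $[-1,G_1],[G_1,G_2],\dots,[G_{k-1},G_k],[G_k,1]$, say with value $c_j$ on the $j$-th piece. The integral is then a finite sum of elementary pieces, and the whole argument reduces to showing each piece either vanishes or cancels.

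First I would split the integral over these subintervals and use $\int \hat R''\,\ud\xi = \hat R'(\text{right}) - \hat R'(\text{left})$ on each, so that
\[
\int_{-1}^{1} \hat R''\,\Pi_h^{k,*}\phi\,\ud\xi = \sum_j c_j\bigl(\hat R'(\text{right end of piece }j) - \hat R'(\text{left end of piece }j)\bigr).
\]
By the full MMD constraints $(\ref{eq:MMD_odd2_full})$/$(\ref{eq:MMD_even2_full})$ we have $\hat R'(G_m)=0$ for every $m\in\{1,\dots,k\}$ — equivalently $\mathbb{P}_1$ is precisely the zero set of $R_K'$ — so every interior subinterval $[G_j,G_{j+1}]$ contributes a difference of two zeros, and only the two outer pieces survive, leaving $-c_0\,\hat R'(-1) + c_k\,\hat R'(1)$.

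The crux is to show $c_0=c_k=0$. This cannot be achieved by parity of $\hat R'$ alone, since $\hat R'(\pm1)$ is generally nonzero (for instance $\hat M_2'(\pm1)=\pm1$). Instead I would use that the extreme interpolation nodes of $\Pi_h^{k,*}$ satisfy $D_0=-1$ and $D_k=1$, so the value of $\Pi_h^{k,*}\phi$ on the dual cell touching an endpoint of $K$ equals $\phi$ evaluated at a \emph{primary} mesh node $x_i$. Since $\Pi_h^1 w_I$ interpolates $w_I$ at the primary nodes, $\phi=w_I-\Pi_h^1 w_I$ vanishes there; hence $c_0=\phi(x_{i-1})=0$ and $c_k=\phi(x_i)=0$, and the identity follows.

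The step I expect to be the main obstacle is the index bookkeeping for the dual cells that straddle a primary element boundary: the cell carrying the piece $[-1,G_1]$ is $K_{i-1,k}^{*}$ and the cell carrying $[G_k,1]$ is $K_{i,k}^{*}$, so $c_0$ and $c_k$ are values of $\phi$ at nodes indexed by the neighbouring (respectively the same) element, namely $d_{i-1,k}=x_{i-1}$ and $d_{i,k}=x_i$ because $D_k=1$. Tracking these indices cleanly, and treating the boundary elements $i=1,N$ where the relevant cell is $K_{1,0}^{*}$ or $K_{N,k}^{*}$ and where one instead invokes the homogeneous condition $w_I(0)=w_I(1)=0$, is the only delicate part; once the controlling reference node is recognized to be $D_k=1$ (and $D_0=-1$), the vanishing of the surviving boundary term is immediate.
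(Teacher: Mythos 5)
Your proof is correct, and every fact you lean on is already established in the paper before this lemma: the full constraint sets (\ref{eq:MMD_odd2_full})--(\ref{eq:MMD_even2_full}) do give $\hat R'(G_m)=0$ for every $m\in\{1,\dots,k\}$, and since $D_0=-1$ and $D_k=1$, the two outer dual cells carry the values $\phi(x_{i-1})$ and $\phi(x_i)$ of $\phi:=w_I-\Pi_h^1 w_I$, which vanish because $\Pi_h^1$ interpolates $w_I$ at the primary nodes (this also covers the boundary elements $i=1,N$, so the homogeneous boundary condition you invoke there is not even needed). The route is genuinely different in organization from the paper's. The paper never treats $\phi$ as a whole: it expands $w_I-\Pi_h^1 w_I=\sum_{j=2}^{k}b_{j,K}^{w}\hat M_j$ and kills each component separately --- for even $j$ by pairing (\ref{eq:MMD_odd2_full}) with the jump coefficients $\hat M_j(D_{m-1})-\hat M_j(D_m)$ through the same split-and-telescope identity you use (the boundary terms there vanish because $\hat M_j(\pm1)=0$ for $j\geq2$), and for odd $j$ by a parity argument ($\hat R''$ even against $\Pi_h^{k,*}\hat M_j$ odd), yielding (\ref{eq:M_combine1}) and (\ref{eq:M_combine2}). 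In your version the even/odd case split and the parity argument disappear: the telescoping is run once for the whole function, the interior terms die because the dual points are exactly the roots of $R_K'$, and the surviving endpoint terms die because of the interpolation property of $\Pi_h^1$ --- the exact counterpart of the paper's use of $\hat M_j(\pm1)=0$. You also correctly observe that parity alone cannot kill the $\hat R'(\pm1)$ terms, which is precisely why some endpoint-vanishing fact is indispensable in either proof. What your approach buys is a single, more elementary computation valid uniformly for odd and even $k$; what the paper's component form buys is an explicit display of the M-function mechanics, consistent with the MMD machinery used throughout the rest of the analysis.
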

\begin{proof}
For odd $k=2l-1$, from (\ref{eq:MMD_odd2_full}), we have
\begin{align*}
\sum\limits_{m=1}^{ k }
(\sum\limits_{t=1}^{l-1} b_{2t}^{*} \hat{M}_{2t}'(G_{m}) +b_{k+1}^{u} \hat{M}_{k+1}'(G_{m}))
\, \left( \hat{M}_{j}( D_{m-1} ) - \hat{M}_{j}( D_{m} ) \right) &= 0,  \quad j=2,4,\dots,k-1.%\\
%\sum\limits_{m=1}^{ k }
%(\sum\limits_{t=2}^{l} b_{2t-1}^{*} \hat{M}_{2t-1}'(G_{m}) +b_{k+1}^{u} \hat{M}_{k+1}'(G_{m}))
%\, \left( \hat{M}_{j}( D_{m-1} ) - \hat{M}_{j}( D_{m} ) \right) &= 0.
\end{align*}
Combining with the fact that %\textcolor{red}{There are some mistakes of the notations in the following equations}
\begin{align*}
% \nonumber to remove numbering (before each equation)
  & \int_{-1}^{1} \hat{M}_{i}''\, \Pi_{h}^{k,*} \hat{M}_{j} \ud \xi \nonumber\\
  =& \hat{M}_{j}( D_{0} ) \int_{-1}^{G_{1}} \hat{M}_{i}'' dx
         +\sum\limits_{m=1}^{k-1} \hat{M}_{j}( D_{m} )
            \int_{ G_{m} }^{ G_{m+1} }
                \hat{M}_{i}'' dx
         + \hat{M}_{j}( D_{k} ) \int_{G_{k}}^{1} \hat{M}_{i}'' dx  \nonumber\\
  =& \hat{M}_{j}( D_{0} )\,\, \hat{M}_{i}'|_{-1}^{ G_{1} }
         +\sum\limits_{m=1}^{ k-1 } \left( \hat{M}_{j}( D_{m} ) \,\,  \hat{M}_{i}'|_{ G_{m} }^{ G_{m+1} } \right)
         + \hat{M}_{j}( D_{k} )\,\,  \hat{M}_{i}'|_{ G_{k} }^{1}  \nonumber  \\
  =& \sum\limits_{m=1}^{ k } \hat{M}_{i}'(G_{m}) \left( \hat{M}_{j}( D_{m-1} ) - \hat{M}_{j}( D_{m} ) \right),
           \nonumber
\end{align*}
we have
\begin{align} \label{eq:M_combine1}
  \int_{-1}^{1} (\sum\limits_{t=1}^{l-1} b_{2t}^{*} \hat{M}_{2t} +b_{k+1}^{u} \hat{M}_{k+1})''\, \Pi_{h}^{k,*} \hat{M}_{j} \ud \xi = 0, \quad   j= 2,4,\dots,k-1.
\end{align}

On the other hand, since $\Pi_{h}^{k,*} \hat{M}_{j}$ ($j=3,5,\dots,k$) are odd functions on $\hat{K}$  and $(\sum\limits_{t=1}^{l-1} b_{2t}^{*} \hat{M}_{2t} +b_{k+1}^{u} \hat{M}_{k+1})''$ is an even function, we have
\begin{align} \label{eq:M_combine2}
  \int_{-1}^{1} (\sum\limits_{t=1}^{l-1} b_{2t}^{*} \hat{M}_{2t} +b_{k+1}^{u} \hat{M}_{k+1})''\, \Pi_{h}^{k,*} \hat{M}_{j} \ud \xi = 0, \quad  j= 3,5,\dots,k.
\end{align}
It follows from (\ref{eq:M_combine1}) and (\ref{eq:M_combine2}) that
\begin{align*}
  \int_{-1}^{1} (\sum\limits_{t=1}^{l-1} b_{2t}^{*} \hat{M}_{2t} +b_{k+1}^{u} \hat{M}_{k+1})''\, \Pi_{h}^{k,*} (\sum\limits_{j=2}^{k} b_{j,K}^{w}\hat{M}_{j}) \ud \xi = 0.
\end{align*}
By (\ref{eq:wI_decompose}), (\ref{eq:u_uI_decomposition}),and the linear affine mapping from $K$ to $\hat{K}$, one has the conclusion (\ref{eq:RK_Piw}).

With a similar procedure, (\ref{eq:RK_Piw}) also holds for even $k=2l$.
\end{proof}

Now, we are ready to give the proof of Theorem~\ref{thm:super_H1}.
\begin{proof}
We first prove the weak estimate of the first type (\ref{eq:Weakestimate_thm}).
\begin{align}
% \nonumber to remove numbering (before each equation)
  a_{h}(u-u_I, \Pi_{h}^{k,*}w_{I}) = E1+E2+E3,
\end{align}
where
\begin{align}
% \nonumber to remove numbering (before each equation)
  E_1 =& a_{h}(u-u_I, \Pi_{h}^{k,*}(w_{I}- w_{I,1}) ), \nonumber \\
  E_2 =& a_{h}(u-u_I, \Pi_{h}^{k,*} w_{I,1}) -a(u-u_I, w_{I,1}),  \nonumber \\
  E_3 =& a(u-u_I, w_{I,1} ),  \nonumber
\end{align}
with $w_{I,1}=\Pi_{h}^{1}w_I$.

To estimate $E_1$, one can  use (\ref{eq:u_uI_decomposition}) and (\ref{eq:RK_Piw}) to get
\begin{align}  \label{eq:Superconv_E1_1}
% \nonumber to remove numbering (before each equation)
 \left|\int_{K} (u-u_I)'' \, \Pi_{h}^{k,*}(w_{I}-w_{I,1})\, \ud x \right|
   =& \left|\int_{K} (R_{K}+r_{K})''\, \Pi_{h}^{k,*}(w_{I}-w_{I,1})\, \ud x \right|  \nonumber\\
   =& \left|\int_{K} (r_{K})'' \, \Pi_{h}^{k,*}(w_{I}-w_{I,1})\, \ud x \right|    \nonumber\\
   \lesssim & h^{k+1}\|u\|_{k+2,K} \|w_{I}\|_{1,K}.
\end{align}
Here, the hidden constant is independent of $h$. Noting that $w_I-w_{I,1}=0$ on the endpoints of each $K$, we have the estimate for the diffusion term, which is the first term of $a_{h}(u-u_I, \Pi_{h}^{k,*}(w_{I}-w_{I,1}) )$.
\begin{align}  \label{eq:Superconv_E1_2}
& \bigg{|}\sum\limits_{ K\in\mathcal{T}_{h} } \sum\limits_{ K^{*} \in \mathcal{T}_{h}^{*} }
-( p \, (u-u_{I})' \, \Pi_{h}^{k,*}(w_I - w_{I,1}))|_{\partial K^{*} \cap K}\bigg{|}  \nonumber \\
=& \bigg{|}\sum\limits_{K\in\mathcal{T}_{h}} ( p(u-u_{I})'\, \Pi_{h}^{k,*}(w_I - w_{I,1}) )|_{\partial K}
    - \sum\limits_{K\in\mathcal{T}_{h}} \int_{K}(p(u-u_{I})')'\,  \Pi_{h}^{k,*}(w_I - w_{I,1})\,\ud x\bigg{|}  \nonumber \\
\leq&  \bigg{|}\sum\limits_{K\in\mathcal{T}_{h}} \int_{K}((p-p_{0,K})(u-u_{I})')'\,  \Pi_{h}^{k,*}(w_I - w_{I,1})\,\ud x \bigg{|}  \nonumber\\
& +\bigg{|} \sum\limits_{K\in\mathcal{T}_{h}} \int_{K}p_{0,K}(u-u_{I})''\,  \Pi_{h}^{k,*}(w_I - w_{I,1})\,\ud x\bigg{|}   \nonumber\\
\lesssim& h^{k+1}\|u\|_{k+2} \|w_{I}\|_{1}.
\end{align}
It follows from (\ref{eq:u_uI_L2}) that
\begin{align} \label{eq:Superconv_E1_3}
% \nonumber to remove numbering (before each equation)
      &  \left| \int_{ K} q (u-u_I)'\, \Pi_{h}^{k,*}(w_{I}-w_{I,1}) + r (u-u_I) \,    \Pi_{h}^{k,*}(w_{I}-w_{I,1}) \ud x  \right| \nonumber \\
      \lesssim& \|u-u_I\|_{1,K} \,\|w_{I}-w_{I,1}\|_{0} + \|u-u_I\|_{0,K} \,\|w_{I}-w_{I,1}\|_{0}   \nonumber\\
      \lesssim& h^{k+1} \|u\|_{k+1,K} \, \| w_{I} \|_{1,K},
\end{align}
where the hidden constant depends on $q$ and $r$. Then, (\ref{eq:Superconv_E1_2}) and (\ref{eq:Superconv_E1_3}) yield
\begin{align}  \label{eq:Superconv_E1}
% \nonumber to remove numbering (before each equation)
  |E_1| \lesssim&  h^{k+1} \|u\|_{k+2,K} \, \| w_{I} \|_{1,K},
\end{align}
where the hidden constant is dependent on $p$, $q$, and $r$.

For $E_2$, it follows from the $k$-$(k-1)$-order orthogonal condition (\ref{eq:orth_condition1D}) and the inverse inequality that
\begin{align} \label{eq:Superconv_E2_0}
& \bigg{|} \sum\limits_{K\in\mathcal{T}_{h}}\int_{K}(p(u-u_{I})')'\,
    \big(\Pi_{h}^{1}w-\Pi_{h}^{k,*}(\Pi_{h}^{1}w)\big)\ud x  \bigg{|}    \nonumber \\
\leq& \bigg{|} \sum\limits_{K\in\mathcal{T}_{h}}\int_{K}((p-p_{0,K})(u-u_{I})')'\,
    \big(\Pi_{h}^{1}w-\Pi_{h}^{k,*}(\Pi_{h}^{1}w)\big)\ud x  \bigg{|}     \nonumber \\
& +\bigg{|} \sum\limits_{K\in\mathcal{T}_{h}} p_{0,K} \int_{K} \left((u-u_{I})'' - \Pi_{h}^{k-1}(u-u_{I})'' \right)\,
    \big(\Pi_{h}^{1}w-\Pi_{h}^{k,*}(\Pi_{h}^{1}w)\big)\ud x  \bigg{|}      \nonumber \\
\leq&  C h \|p\|_{2,\infty} \|u-u_{I}\|_{2} \, \|\Pi_{h}^{1}w-\Pi_{h}^{k,*}(\Pi_{h}^{1}w)\|_{0} \nonumber\\
& +\bigg{|} \sum\limits_{K\in\mathcal{T}_{h}}\int_{K} p_{0,K} \left( u'' - \Pi_{h}^{k-1}u'' \right)\,
    \big(\Pi_{h}^{1}w-\Pi_{h}^{k,*}(\Pi_{h}^{1}w)\big)\ud x   \bigg{|}      \nonumber\\
\leq&  C h^2 \|p\|_{2,\infty} \|u-u_{I}\|_{2} \, \|w\|_{1}  + C \| p\|_{0,\infty} h^{k+1} \| u \|_{k+2}\, \|w\|_{1}    \nonumber \\
\lesssim&  h^{k+1} \|u\|_{k+2} \, \|w\|_{1},    % \qquad \qquad\qquad \qquad\qquad {\rm (inverse\,\, inequality)}
\end{align}
where $p_{0,K}$ is the average of $p$ on $K$. By the integration by parts and the fact $u(x_i)-u_I(x_i)=0$, we can obtain
\begin{align}\label{eq:Superconv_E2}
% \nonumber to remove numbering (before each equation)
  |E_2| = &  \bigg{|} \sum\limits_{K\in\mathcal{T}_{h}} \int_{K}(p(u-u_{I})')'\,  (\Pi_{h}^{k,*}w_{I,1} -w_{I,1} )\,\ud x\bigg{|}    \nonumber\\
  & + \bigg{|}\sum\limits_{K\in\mathcal{T}_{h}} \int\limits_{ K }  (q\, (u-u_{I})' + r\, (u-u_{I}))\, (\Pi_{h}^{k,*} w_1 -w_1) \ud x\bigg{|} \nonumber\\
  \lesssim &  h^{k+1} \|u\|_{k+2}  \|w_{I}\|_{1},
\end{align}
and
\begin{align} \label{eq:Superconv_E3}
% \nonumber to remove numbering (before each equation)
  |E_3| =& \bigg{|}\sum\limits_{K\in\mathcal{T}_{h}}\int_{K} \big( p(u-u_{I})'\,
    w_{I,1}' + q (u-u_{I})' \,w_{I,1}+ r (u-u_{I})\,w_{I,1}\big) \ud x\bigg{|}   \nonumber\\
    =& \bigg{|}\sum\limits_{K\in\mathcal{T}_{h}}\int_{K} \big( (u-u_{I})\,
    p'\,w_{I,1}' + (u-u_{I}) \,(q\,w_{I,1})'+ r (u-u_{I})\,w_{I,1}\big) \ud x\bigg{|}   \nonumber\\
%    &\leq& \sum\limits_{K\in\mathcal{T}_{h}} \big( C \|u-u_{I}\|_{0,K}\,
%    |w_{I,1}|_{1,K} + C  \|u-u_{I}\|_{0,K} \,\|w_1\|_{1,K}+ C \|u-u_{I}\|_{0,K}\,\|w_1\|_{0,K}\big)   \nonumber\\
    \lesssim& \sum\limits_{K\in\mathcal{T}_{h}}  \|u-u_{I}\|_{0,K} \,\|w_{I,1}\|_{1,K}   \nonumber\\
    \lesssim&  h^{k+1} \|u\|_{k+1} \,\|w_I\|_{1}.
\end{align}

Combining (\ref{eq:Superconv_E1})-(\ref{eq:Superconv_E3}), one can get (\ref{eq:Weakestimate_thm}) immediately.
Further more, by using the inf-sup condition (\ref{eq:infsup}), we have the superconvergence of the derivative (\ref{eq:Superconv1_thm}).
\end{proof}

As a side product of the superconvergence of the derivative, we give the $L^2$ estimates without proof.
\begin{thm}[$L^2$ estimates] \label{thm:L2}
Let $u\in H_{0}^{1}\cap H^{k+2}(\Omega)$ be the solution of $(\ref{eq:BVP1D})$, and $\mathcal{T}_{h}$ be regular. For $k$-order FVEM, if the $k$-$(k-1)$-order orthogonal condition holds,
we have the following optimal $L^2$ estimates %there exists a positive constant $C$ such that
\begin{equation} \label{eq:L2_thm}
%\| u-u_{h}\|_{0}\leq \mathit{C}h^{k+1} (\|A\|_{0,\infty}+|A|_{1,\infty})\|u\|_{k+2}.
\| u-u_{h}\|_{0}\leq \mathit{C}h^{k+1} \|u\|_{k+2}.
\end{equation}
\end{thm}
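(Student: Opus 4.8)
The plan is to obtain the optimal $L^2$ estimate as a direct corollary of the derivative superconvergence in Theorem~\ref{thm:super_H1}, so that no separate Aubin--Nitsche duality argument is needed. The starting point is to insert the superclose function $u_I\in U_h^k$ (the one satisfying the MMD constraints, made continuous via the correction of Remark~\ref{rem:correction}) and split by the triangle inequality
\begin{align*}
\|u-u_h\|_0 \leq \|u-u_I\|_0 + \|u_I-u_h\|_0 .
\end{align*}
Both pieces are then controlled by results already established earlier in the paper, which is exactly what makes the estimate a ``side product''.

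For the first piece I would quote the supercloseness bound $(\ref{eq:u_uI_L2})$ from Lemma~\ref{lem:MMD}, giving $\|u-u_I\|_0 \lesssim h^{k+1}\|u\|_{k+1}$, which already carries the optimal order. For the second piece the key observation is simply that the $L^2$ norm is dominated by the $H^1$ norm, so that $\|u_I-u_h\|_0 \leq \|u_I-u_h\|_1$; since the $k$-$(k-1)$-order orthogonal condition is assumed, Theorem~\ref{thm:super_H1} applies and yields $\|u_I-u_h\|_1 \leq C h^{k+1}\|u\|_{k+2}$. Adding the two bounds and absorbing $\|u\|_{k+1}\leq\|u\|_{k+2}$ into a single constant gives $(\ref{eq:L2_thm})$ at once.

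Because every ingredient is in place, the genuine content of the argument is the recognition that the derivative superconvergence already encodes the $L^2$ information: the $H^1$-supercloseness of $u_h$ to $u_I$ automatically controls their $L^2$ distance, while $u_I$ is $L^2$-close to $u$ by construction. I do not expect a real obstacle here; the only care needed is bookkeeping — ensuring that the \emph{same} $u_I$ is used in both Lemma~\ref{lem:MMD} and Theorem~\ref{thm:super_H1}, and that the continuity correction of Remark~\ref{rem:correction} preserves both the $L^2$-supercloseness and the $H^1$ superconvergence, so that $u_h-u_I$ is a legitimate element of the discrete space on which Theorem~\ref{thm:super_H1} operates. This is precisely why the authors may state the estimate without proof: it is a one-line consequence of the triangle inequality together with the two previously proved bounds.
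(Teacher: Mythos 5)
Your proof is correct and is essentially the argument the paper intends: the paper states Theorem~\ref{thm:L2} without proof, explicitly as a ``side product of the superconvergence of the derivative,'' and your triangle-inequality splitting $\|u-u_h\|_0 \leq \|u-u_I\|_0 + \|u_I-u_h\|_0$, bounded via Lemma~\ref{lem:MMD} and Theorem~\ref{thm:super_H1} (with $\|\cdot\|_0\leq\|\cdot\|_1$), is precisely that derivation. Your bookkeeping remarks about using the same corrected $u_I$ in both ingredients are also consistent with Remark~\ref{rem:correction}.
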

%\textcolor{blue}{Finally, we can describe the ideas with 5-order and 6-order FVE schemes, and the more general case in the Appendix}

%\begin{thm}[Superconvergence of the derivative]  \label{thm:super_H1}
%Let $u\in H_{0}^{1}\cap H^{k+2}(\Omega)$ is the solution of $(\ref{eq:BVP1D})$,  $\mathcal{T}_{h}$ is regular. For $k$-order Lagrange trial function space $U_{h}^{k}$, choose $\mathcal{T}_{h,\mathrm{ort}}^{*}$ satisfying the $k$-$(k-1)$-order orthogonal condition. Choose $u_I$ satisfying the MMD constraints $(\ref{eq:MMD_odd1})$-$(\ref{eq:MMD_odd3})$ and $(\ref{eq:MMD_even1})$-$(\ref{eq:MMD_even3})$. Then, there exists a positive constant $C$ such that
%\begin{equation} \label{eq:Superconv1_thm}
%%\| u-u_{h}\|_{0}\leq \mathit{C}h^{k+1} (\|A\|_{0,\infty}+|A|_{1,\infty})\|u\|_{k+2}.
%\| u_{h}-u_{I}\|_{1}\leq \mathit{C}h^{k+1} \|u\|_{k+2}.
%\end{equation}
%\end{thm}

\subsection{Superconvergence of the function value}

%The analysis of the superconvergence of the numerical solution $u_h$ at some special points (we call them superconvergent points) can be considered in two steps:
%\begin{enumerate}
%  \item Construct a piecewise $k$-order polynomial $u_I$ to approximate $u$, such that $u_I$ is superclose to $u$ at the superconvergent points;
%  \item Prove $u_h$ and $u_I$ are superclose in the whole domain.
%\end{enumerate}

%1) Construct a piecewise $k$-order polynomial $u_I$ to approximate $u$, such that $u_I$ is superclose to $u$ at the superconvergent points;
%
%2) Prove $u_h$ and $u_I$ are superclose in the whole domain.

\begin{thm}[Superconvergence of the function value]  \label{thm:Super_fun}
Let $u\in H_{0}^{1}\cap H^{k+3}(I)$ be the solution to $(\ref{eq:BVP1D})$,  and $\mathcal{T}_{h}$ be regular.
For $k$-order Lagrange trial function space $U_{h}^{k}$, if a FVE scheme satisfies the $k$-$k$-order orthogonal condition and $u_I\in U_{h}^k$ satisfies
the MMD constraints $(\ref{eq:MMD_odd1})$-$(\ref{eq:MMD_odd2})$ or $(\ref{eq:MMD_even1})$-$(\ref{eq:MMD_even2})$, we have
\begin{equation} \label{eq:Superconv2_thm}
%\| u-u_{h}\|_{0}\leq \mathit{C}h^{k+1} (\|A\|_{0,\infty}+|A|_{1,\infty})\|u\|_{k+2}.
\| u_{h}-u_{I}\|_{0}\leq \mathit{C}h^{k+2} \|u\|_{k+3}.
\end{equation}
\end{thm}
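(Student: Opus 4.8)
The plan is to combine an Aubin--Nitsche duality argument with a \emph{weak estimate of the second type} that sharpens $(\ref{eq:Weakestimate_thm})$ by one power of $h$, the extra order being supplied by the stronger $k$-$k$-order orthogonal condition and the extra regularity $u\in H^{k+3}$. Exactly as for Theorem~\ref{thm:super_H1}, $u$ satisfies $(\ref{eq:BVP_int})$ on every control volume, so $a_h(u,v_h)=(f,v_h)=a_h(u_h,v_h)$ for all $v_h\in V_h$; the resulting Galerkin orthogonality $a_h(u-u_h,\Pi_h^{k,*}w_I)=0$ gives, for $e_h:=u_h-u_I$,
\begin{equation*}
a_h(e_h,\Pi_h^{k,*}w_I)=a_h(u-u_I,\Pi_h^{k,*}w_I),\qquad\forall\,w_I\in U_h^k,
\end{equation*}
and, after the correction of Remark~\ref{rem:correction}, $e_h\in H_0^1(\Omega)$ so the duality is available.

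First I would establish
\begin{equation*}
|a_h(u-u_I,\Pi_h^{k,*}w_I)|\le C\,h^{k+2}\|u\|_{k+3}\,\|w_I\|_2,\qquad\forall\,w_I\in U_h^k,
\end{equation*}
by reusing the splitting $a_h(u-u_I,\Pi_h^{k,*}w_I)=E_1+E_2+E_3$ with $w_{I,1}=\Pi_h^1w_I$ and the decomposition $u-u_I=R_K+r_K$ of $(\ref{eq:u_uI_decomposition})$. The cancellation identity of Lemma~\ref{lem:RK_Piw} still reduces $E_1$ to $\int_Kr_K''\,\Pi_h^{k,*}(w_I-w_{I,1})$; measuring the test difference in $H^2$ rather than $H^1$ replaces the factor $O(h)\|w_I\|_1$ by $O(h^2)\|w_I\|_2$ and gives $E_1=O(h^{k+2})\|u\|_{k+2}\|w_I\|_2$ after a global Cauchy--Schwarz. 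For $E_2$ I would invoke the $k$-$k$-order orthogonal condition: since $\Pi_h^1w-\Pi_h^{k,*}(\Pi_h^1w)$ is then orthogonal to all of $P^k(K)$, the diffusion consistency term may be tested against $\Pi_h^ku''$, whose remainder is $O(h^{k+1})\|u\|_{k+3}$ instead of the $O(h^{k})\|u\|_{k+2}$ available under the $k$-$(k-1)$-order condition. Finally $E_3=a(u-u_I,w_{I,1})$ is integrated by parts onto $u-u_I$, using $u-u_I=0$ at element endpoints together with the moment structure that the MMD imposes on $R_K$.

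With the second-type estimate I would set up the adjoint problem: find $\phi\in H_0^1\cap H^2(\Omega)$ with $a(v,\phi)=(v,e_h)$ for all $v\in H_0^1$, where $a$ is the Galerkin form of $(\ref{eq:BVP1D})$, so that elliptic regularity yields $\|\phi\|_2\lesssim\|e_h\|_0$. Writing $\phi_I$ for the $k$-order interpolant of $\phi$ and taking $v=e_h$,
\begin{equation*}
\|e_h\|_0^2=a(e_h,\phi-\phi_I)+\big[a(e_h,\phi_I)-a_h(e_h,\Pi_h^{k,*}\phi_I)\big]+a_h(u-u_I,\Pi_h^{k,*}\phi_I),
\end{equation*}
the last term rewritten by Galerkin orthogonality. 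The first is $\lesssim\|e_h\|_1\|\phi-\phi_I\|_1\lesssim h^{k+1}\|u\|_{k+2}\cdot h\|\phi\|_2$ by the $H^1$ superconvergence $(\ref{eq:Superconv1_thm})$; the bracketed consistency term, rewritten through the identity $a(v_h,w_h)-a_h(v_h,\Pi_h^{k,*}w_h)=\sum_K\int_K(pv_h')'(\Pi_h^{k,*}w_h-w_h)$ plus lower-order convection/reaction contributions, is split via $\Pi_h^1\phi_I$ so that the orthogonal condition annihilates the principal part and an inverse inequality controls the remainder, giving $\lesssim h\|e_h\|_1\|\phi_I\|_2\lesssim h^{k+2}\|u\|_{k+2}\|e_h\|_0$; and the last term is $\lesssim h^{k+2}\|u\|_{k+3}\|\phi_I\|_2\lesssim h^{k+2}\|u\|_{k+3}\|e_h\|_0$ by the second-type estimate. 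Dividing by $\|e_h\|_0$ yields $(\ref{eq:Superconv2_thm})$.

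The hard part will be the lower-order term $E_3$ in the second-type weak estimate. Because $\|u-u_I\|_0$ is only $O(h^{k+1})$, while $E_3$ pairs $u-u_I$ against a smooth function, a crude element-by-element bound accumulates to $O(h^{k+1})$ over the $O(h^{-1})$ elements; for even $k$ this is cured by the fact that $R_K$ is orthogonal to constants on each element, so that a single global Cauchy--Schwarz suffices, but for odd $k$ the surviving $\hat{M}_2$-component of $R_K$ means the extra order must instead be extracted from the full strength of the $k$-$k$-order condition (which for odd $k$ is a genuine extra constraint beyond the automatically satisfied $k$-$(k-1)$-order one) or from a cancellation between $E_3$ and the diffusion part. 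A secondary point is confirming $H^2$ regularity and the bound $\|\phi\|_2\lesssim\|e_h\|_0$ for the adjoint of the non-symmetric operator in $(\ref{eq:BVP1D})$.
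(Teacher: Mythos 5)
Your overall framework coincides with the paper's: an Aubin--Nitsche duality argument in which the dual solution is replaced by a low-order interpolant, the FVE consistency $a_h(u-u_h,v_h)=0$ is used to convert everything into terms involving $u-u_I$, and the $k$-$k$-order orthogonal condition buys the extra power of $h$ in the consistency term (your bracketed term and your $E_2$ play the role of the paper's $E_6$, your first duality term the role of its $E_4$; the paper works directly with the piecewise linear interpolant $w_1=\Pi_h^1 w$ of the dual solution rather than routing through a second-type weak estimate in $\|w_I\|_2$, but that organizational difference is harmless). The genuine gap is exactly where you flag it: the lower-order term $E_3=a(u-u_I,w_{I,1})$ (the paper's $E_5$), which you leave unresolved, and both escape routes you propose for odd $k$ fail. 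The orthogonal condition (\ref{eq:orth_condition1D}) is a statement about $w-\Pi_h^{k,*}w$; no $\Pi_h^{k,*}$ appears anywhere in $E_3$, so the ``full strength of the $k$-$k$-order condition'' literally cannot be brought to bear on it, and there is no cancellation with the diffusion part either --- the diffusion contribution to $E_3$ is disposed of internally, by integrating by parts with $u-u_I=0$ at element endpoints and $w_{I,1}''=0$.

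The missing idea is that the extra order in $E_3$ comes from Lemma~\ref{lem:MMD}, not from the orthogonal condition. The MMD constraints force every coefficient $b_{2t}^*$ (resp.\ $b_{2t-1}^*$) to equal $b_{k+1}^u\,O(1)=O(h^{k+1})$; in particular $b_2^*=O(h^{k+1})$ for odd $k$. Since $(\hat M_i,1)=0$ for every $i\neq 0,2$, the only component of $R_K+r_K$ with nonzero mean is $b_2^*M_2$ (present only for odd $k$), so that, after the correction of Remark~\ref{rem:correction},
\[
\int_K (u-u_I)\,\ud x \;=\; b_2^*\int_K M_2\,\ud x \;+\; O(h^{k+2})\|u\|_{k+2,K}\;=\;O(h^{k+2})\|u\|_{k+2,K}
\]
for both parities: the ``surviving $\hat M_2$-component'' you worry about is already harmless, because its coefficient is small, not because it vanishes. (Your observation is sharp, incidentally --- the paper's intermediate claim $\int_K(R_K+r_K)\,\ud x=0$ in the proof of (\ref{eq:E5}) is exact only for even $k$ --- but the coefficient bound repairs it.) With this elementwise near-mean-zero property, the paper estimates $E_5$ by pairing $u-u_I$ against $p'w_1'$, $(q w_1)'$ and $r w_1$, using that $w_1'$ is constant on each $K$, and subtracting the elementwise averages $\Pi_h^0$ of these coefficients: the constant parts see only $\int_K(u-u_I)\,\ud x=O(h^{k+2})$, while the oscillating parts contribute an extra factor of $h$ against $\|u-u_I\|_{0,K}=O(h^{k+1})$ from (\ref{eq:u_uI_L2}). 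This yields $|E_3|\lesssim h^{k+2}\|u\|_{k+2}\|w_I\|_1$ uniformly in the parity of $k$, with the $k$-$k$-order condition reserved for the consistency term alone, exactly as you use it there.
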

%With a higher order orthogonal condition, the proof of Theorem.~\ref{thm:Super_fun} can be easily given as follows.
\begin{proof}
We begin with the Aubin-Nistche technique. Introduce an auxiliary problem: For $\forall g\in L^{2}(\Omega)$, find $w \in H^{1}_{0}(\Omega)$ such that
\begin{equation} \label{eq:dualproblem}
a(v,w)=(g,v),\quad \forall v\in H^{1}_{0}(\Omega),
\end{equation}
where
\begin{align*}
a( v , w) = {\int_{\Omega} pv'w'+ (q v' + r v) w \ud x,}    \quad
(g,v) = \int_{\Omega} \, g \, v \,\ud x.
\end{align*}
Take $g=u_h-u_I$ to give
\begin{align}
% \nonumber to remove numbering (before each equation)
  \|u_h - u_I\|^2 =& a(u_h - u_I, w) \nonumber\\
   =& a(u_h - u_I, w- w_{1} ) + a(u - u_I, w_{1})        \nonumber\\
   &   - \left( a(u - u_h, w_{1})   - a_{h}(u-u_{h}, \Pi_h^{k,*} w_{1})  \right)  \nonumber\\
   =& E_4 +E_5+E_6.
\end{align}
where $w_{1}=\Pi_{h}^{1}w$ and
\begin{align*}
% \nonumber to remove numbering (before each equation)
  E_4 =& a(u_h - u_I, w-w_{1}), \\
  E_5 =& a(u - u_I, w_{1}), \\
  E_6 =& -\left( a(u - u_h, w_{1})   - a_{h}(u-u_{h}, \Pi_h^{k,*} w_{1})  \right) .
\end{align*}

It follows from (\ref{eq:Superconv1_thm}) that
\begin{align}
|E_4| \lesssim \|u_h - u_I\|_{1} \|w-w_{1}\|_{1} \lesssim h^{k+2} \|u\|_{k+2}\|w\|_{2}.  \label{eq:E4}
\end{align}

For $E_5$, using (\ref{eq:u_uI_decomposition}) and the quasi-orthogonality of the M-functions, we have $\int_{K} u-u_I \,\ud x=\int_{K} (R_{K}+r_{K})\,1 \,\ud x=0$.
Taking the correction term $r_{\mathrm{corr}}$ in (\ref{rem:correction}) into consideration, we get
\[
\int_{K} u-u_I \,\ud x=\int_{K} R_{K}+r_{K}-r_{\mathrm{corr}} \,\ud x = \int_{K} -r_{\mathrm{corr}} \,\ud x = O(h^{k+2})\|u\|_{k+2,K}.
\]
Then, noticing that $w_{1}'$ is a constant on $K$, a similar procedure of (\ref{eq:Superconv_E3}) gives
\begin{align} \label{eq:E5}
% \nonumber to remove numbering (before each equation)
  |E_5| =& \left|\sum\limits_{K\in\mathcal{T}_{h}}\int_{K} (u-u_{I})\,
    p'\,w_{1}' + (u-u_{I}) \,(q\,w_{1})'+ r (u-u_{I})\,w_{1} \ud x \right|   \nonumber\\
    \leq&\sum\limits_{K\in\mathcal{T}_{h}}  \left| \int_{K}  (u-u_{I})\,
    (p'-\Pi_{h}^{0}p')\,w_{1}'  \ud x \right|   \nonumber\\
    & + \sum\limits_{K\in\mathcal{T}_{h}}   \left| \int_{K} (u-u_{I}) \,((q\,w_{1})'-\Pi_h^0(q\,w_{1})'\,)+ (u-u_{I})\,(r\,w_{1}-\Pi_h^0 (r\,w_{1})) \ud x \right|   \nonumber\\
    & + C\sum\limits_{K\in\mathcal{T}_{h}} h^{k+2} \, \|u\|_{k+2,K} \|w_{1}\|_{1,K}  \nonumber\\
    \lesssim&  h^{k+2} \|u\|_{k+2} \,\|w\|_{1},
\end{align}
where the hidden constant is dependent on $p$, $q$ and $r$.

Following a similar procedure of $(\ref{eq:Superconv_E2})$, we have
\begin{align}\label{eq:E6}
% \nonumber to remove numbering (before each equation)
  |E_6| \lesssim h^{k+2}\, \|u\|_{k+3}\, \|w\|_{1}.
\end{align}
Then, combining $(\ref{eq:E4})$-$(\ref{eq:E6})$ completes the proof.
\end{proof}

%\begin{rem}
%The estimate of $E_5$ is similar with the estimate of $E_3$. The only difference is that the orthogonality of the M-functions are used and a linear interpolation of $p'$ (or $p$) is applied.
%\end{rem}

%\begin{rem}
%For the FVEM with symmetric dual meshes, with the results of theorem~\ref{thm:super_H1} and theorem~\ref{thm:Super_fun}, we can conclude that: 1) when the $k$-$(k-1)$-order orthogonal condition is satisfied, the corresponding FVEM scheme holds the natural superconvergence of the derivative; 2) when the $k$-$k$-order orthogonal condition is satisfied, the corresponding FVEM scheme holds the natural superconvergence of the function value as well as the derivative.
%\end{rem}
%
\begin{rem}  \label{rem:Compare_orthogonal}
%\textcolor{red}{From the $(\ref{eq:orth_eq_equivlent})$, we can see that the $k$-$k$-order the orthogonal condition is different for odd $k$ and even $k$.}
For even $k$, the $k$-$k$-order orthogonal condition and the $k$-$(k-1)$-order orthogonal condition deliver same restrictions on the dual mesh, which means Theorems \ref{thm:super_H1} and \ref{thm:Super_fun} share same conditions for even $k$. While, for odd $k$, the $k$-$(k-1)$-order orthogonal condition is always satisfied for FVE schemes with symmetric dual meshes.
And, the restriction on dual mesh in Theorem~\ref{thm:Super_fun} is stronger than that in Theorem~\ref{thm:super_H1}, because the $k$-$k$-order orthogonal condition is stronger restrictions than the $k$-$(k-1)$-order orthogonal condition, for odd $k$.
\end{rem}

\section{Construction of FVE schemes with superconvergence}
\label{sec:construction}
%the $k$-$(k-1)$-order orthogonal condition ensures the natural superconvergence of the derivative, while the $k$-$k$-order orthogonal condition ensures the natural superconvergence of the function value.
From Theorems \ref{thm:super_H1} and \ref{thm:Super_fun}, we can conclude the relationships between the orthogonal condition and the convergence properties shown in Table~\ref{tab:relations}.

Following, we first construct FVE schemes with the help of the orthogonal condition, and then present how to construct FVE schemes with natural superconvergence in easy ways.

\subsection{Constructing the FVE schemes with the orthogonal condition}

\textbf{For odd-order FVEM ($k=2l-1$)}, the superconvergence of the derivative holds naturally for odd-order FVE schemes with symmetric dual meshes. And, when the $k$-$k$-order orthogonal condition is satisfied, there holds the superconvergence of the function value.

For the linear FVEM, the $1$-$1$-order orthogonal condition can not be reached, and the superconvergence of the function value can not be reached.

For the cubic (3-order) FVEM, the $3$-$3$-order orthogonal condition (\ref{eq:orth_condition1D}) leads to unique reasonable solution $\alpha_1=\sqrt{3/5}$.

For the quintic (5-order) FVEM, the $5$-$5$-order orthogonal condition (\ref{eq:orth_condition1D}) is equivalent to the following three restrictions
\begin{align}
% \nonumber to remove numbering (before each equation)
\left\{\begin{array}{l}
  (1- a_{1}) \alpha_{1}^{\,\,2} + (a_1 -a_2)\alpha_{2}^{\,\,2}   = \frac{1}{3}, \\
  (1- a_{1}) \alpha_{1}^{\,\,4} + (a_1 -a_2)\alpha_{2}^{\,\,4}   = \frac{1}{5}, \\
  (1- a_{1}) \alpha_{1}^{\,\,6} + (a_1 -a_2)\alpha_{2}^{\,\,6}   = \frac{1}{7},
    \label{eq:orth_eqs_5order}
  \end{array}\right.
\end{align}
Noticing $0<\alpha_{2}<\alpha_{1}<1$, we have
\begin{align} \label{eq:alpha_relations_5order}
\alpha_{2} = \sqrt{\frac{\alpha_{1}^{2}/5 - 1/7}{\alpha_{1}^{2}/3 - 1/5}}, \quad \mathrm{and} \quad \alpha_{1}\in(\sqrt{5/7},\,1).
\end{align}

\begin{figure}[!ht]
    \centering
    \subfigure{
    \begin{minipage}[t]{.43\textwidth}
      \centering
      \includegraphics*[width=200pt]{./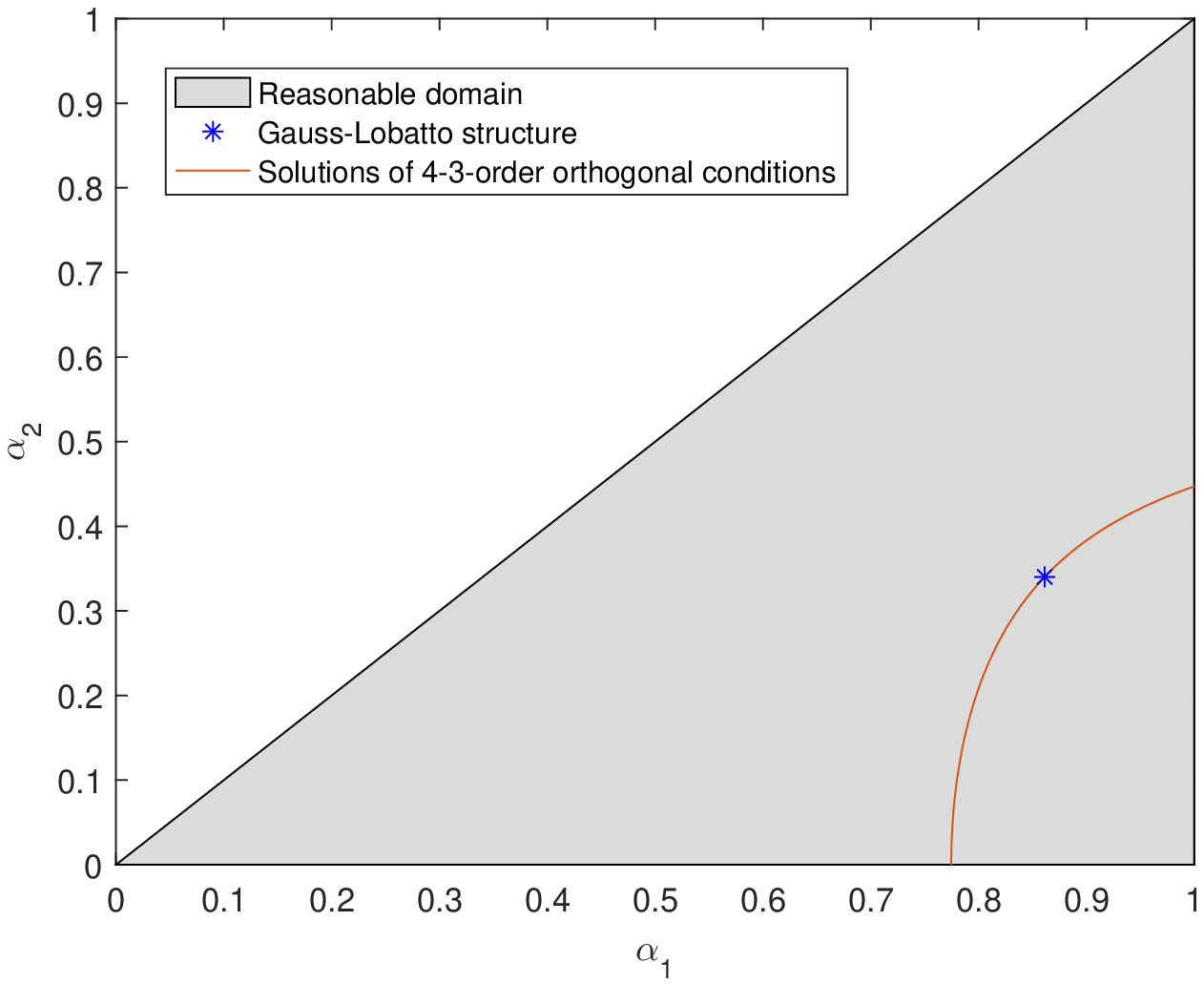}
      \title{(a)}
%      \title{(a) The derivative errors $|u'-u_{h}'|$ on each element $K$ be shown together on the reference element $\hat{K}=[-1,1]$.}
      %\caption{1}
    \end{minipage}}
    \subfigure{
    \begin{minipage}[t]{.43\textwidth}
      \centering
      \includegraphics[width=200pt]{./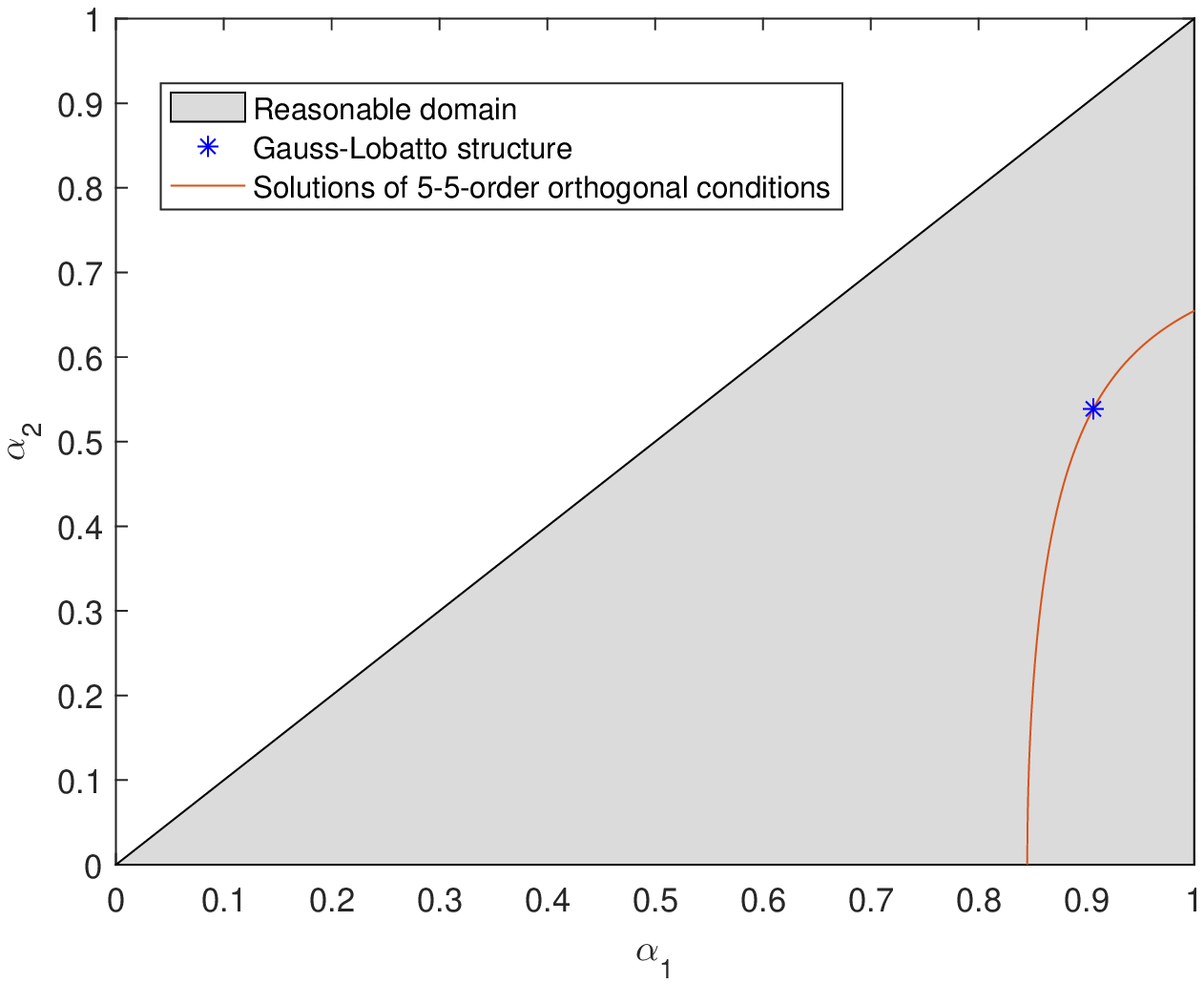}
      \title{(b)}
%      \title{(b) The function value errors $|u-u_{h}|$ on each element $K$ be shown together on the reference element $\hat{K}=[-1,1]$.}
      %\caption{2}
    \end{minipage}}
    \caption{The left figure shows the $4$-$3$-order orthogonal condition for $k=4$. And, the right figure shows the $5$-$5$-order orthogonal condition for $k=5$.}
    \label{fig:alpha4A5order}
\end{figure}

\textbf{For even-order FVEM ($k=2l$)}, when the $k$-$(k-1)$-order orthogonal condition is satisfied, there holds the superconvergence of the derivative, as well as the superconvergence of the function value.

For the quadratic (2-order) FVEM, the $2$-$1$-order orthogonal condition (\ref{eq:orth_condition1D}) has a unique reasonable solution $\alpha_{1}=\frac{\sqrt{3}}{3}$.

For the quartic (4-order) FVEM, the $4$-$3$-order orthogonal condition (\ref{eq:orth_condition1D}) can be derived from the following equations
\begin{align}
% \nonumber to remove numbering (before each equation)
\left\{\begin{array}{l}
  1-\alpha_{1}^{\,\,2} + a_{1} ( \alpha_{1}^{\,\,2} - \alpha_{2}^{\,\,2} )  = \frac{2}{3}, \\
  1-\alpha_{1}^{\,\,4} + a_{1} ( \alpha_{1}^{\,\,4} - \alpha_{2}^{\,\,4} )  = \frac{4}{5}.
    \label{eq:orth_eqs_4order}
  \end{array}\right.
\end{align}
since $0<\alpha_{2}<a_1<\alpha_{1}<1$, we have
\begin{align}
\left\{\begin{array}{l}
\alpha_{1}= \sqrt{  \frac{1}{3} (1+ 2\, \sqrt{\frac{a_1}{5(1-a_{1})}})  }, \\
[-0.8ex]
\\
\alpha_{2}= \sqrt{  \frac{1}{3} (1- 2\, \sqrt{\frac{1-a_{1}}{5\, a_1}})  } ,
    \label{eq:4order_alpha}
  \end{array}\right.
\end{align}
%\begin{align}
%\left\{\begin{array}{l}
%\alpha_{1}= \sqrt{  \frac{ 1- a_{1} \left( 1
%                    \pm 2\sqrt{\frac{1}{5}(\frac{1}{a_1}-1)} \right) }
%                    {3(1-a_{1})}  } , \\
%[-0.8ex]
%\\
%\alpha_{2}= \sqrt{\frac{1}{3}\left( 1 \pm 2\sqrt{ \frac{1}{5}(\frac{1}{a_1}-1) } \right) } .
%    \label{eq:4order_alpha}
%  \end{array}\right.
%\end{align}
where $4/9 \leq a_{1}< 5/6$.

%we have $a_{1}^{*}\geq 0$. Recall (\ref{eq:orth_eqs_4order}), $(1-2 a_{1}^{*})$ and $2a_{1}^{*}$ are symmetric in form, so they should meet the same restriction, i.e., $1-2 a_{1}^{*}\geq 0$. Thus, we have $0\leq a_{1}^{*}\leq 1/2$.
%The relations of the reasonable solutions $\alpha_{1}$ and $\alpha_{2}$ are shown in Fig.~\ref{fig:alpha4order} (the red curve). We can see that (\ref{eq:orth_eqs_4order}) has multi reasonable solutions.
%\textcolor{red}{However, the solutions $(\alpha_{1},\alpha_{2})$ can not fill the domain $\{(\alpha_{1},\alpha_{2}): 0<\alpha_{2}<\alpha_{1}<1\}$.}

%\textcolor{blue}{The figure should be redraw again.}

\begin{rem}
For the FVEM, there are more than one scheme having the superconvergence properties for all $k\geq 3$. What's more, Fig.~\ref{fig:alpha4A5order} (a) for $4$-$3$-order (also $4$-$4$-order) orthogonal condition and Fig.~\ref{fig:alpha4A5order} (b) for $5$-$5$-order orthogonal condition show that, the Gauss-Lobatto structure is a spacial case of the orthogonal structure for FVEM.
\end{rem}

\subsection{Constructing the FVE schemes in easy ways}
\label{subsec:construction_easy}
For the convenience of use, we present the ways to freely choose the derivative superconvergent points (for odd-order FVEM) or the function value superconvergent points (for even-order FVEM).

\textbf{{\em Method I.} For odd $k$-order FVEM ($k=2l-1$)}, given $(\alpha_1,\dots,\alpha_{l-1})$, construct the dual points $g_{i,j}$ ($i=1,\dots,N$,\, $j=1,\dots,k$) accordingly. Then, the corresponding FVE scheme possesses the superconvergence of the derivative, and $g_{i,j}$ are just the derivative superconvergent points.

\textbf{{\em Method II.} For even $k$-order FVEM ($k=2l$)}, given $(l-1)$ parameters $0<\tilde{a}_{l-1}<\tilde{a}_{l-2}<\dots<\tilde{a}_1<1$
\footnote{The definitions of $\tilde{a}_j$ and $\tilde{D_j}$ are similar to $a_j$ and $D_j$ in section~\ref{subsec:notations}.
The only difference is that $\tilde{a}_j$ and $\tilde{D_j}$ are used to locate the computing nodes of the FVEM
where we put unknowns}
on the reference element $\hat{K}$, one can determine $(k+1)$ symmetric points $\tilde{D}_{j}$ ($j=0,1,\dots,k$)
(including the two endpoints and the midpoint of $\hat{K}$). Construct a $(k+1)$-order polynomial $\hat{R}_{k}$,
\begin{align*}
  \hat{R}_{k}(\xi) = \xi\, (\xi^2-1)\, \prod\limits_{j=1}^{l-1}\,(\xi^2-\tilde{a}_j^2).
\end{align*}
By the Rolle's theorem, there are $k$ different roots of $\hat{R}_{k}'=0$ on $\hat{K}$.
Denote these roots by $G_{j}$ $(j=1,2,\dots,k)$. Set the dual points accordingly,
and the corresponding FVE scheme enjoy the superconvergence of the function value.
Moreover, the points on $K\in\mathcal{T}_{h}$ corresponding to the $\tilde{D}_j$ are right the function value superconvergent points
of this FVE scheme.

\begin{rem} \label{rem:method4_verify}
It's easy to verify the resulted even-order FVE schemes from {\em Method II} satisfy the $k$-$k$-order orthogonal condition. At the same time, {\em Method II} is not valid for odd order schemes, since the $k$-$k$-order orthogonal condition can not be guaranteed for odd $k$.
\end{rem}

\begin{rem}
It's interesting to point out that, the superconvergent points of the traditional FEM are fixed and can not be freely chosen. While, we can choose the superconvergent points of the FVEM.
\end{rem}

\section{Numerical experiments} \label{sec:numerical_ex}
In this section we present several numerical results to illustrate the theoretical results in this paper. First, we present four new FVE schemes which will be used in this section.

\textbf{Scheme 3-1:} For the cubic ($k=3$) FVE scheme, $\alpha_1=\sqrt{5/9}$. The computing nodes are selected according to $\tilde{D}_0=-1,\,\, \tilde{D}_1=-1/3, \,\, \tilde{D}_2=1/3,\,\, \tilde{D}_3=1.$

\textbf{Scheme 4-1:} For the quartic ($k=4$) FVE scheme,
$(\alpha_1,\alpha_2)=(\sqrt{\frac{15+\sqrt{145}}{40}},\, \sqrt{\frac{15-\sqrt{145}}{40}})$.
This scheme is obtained by {\em Method II} with letting $\tilde{a}_1 =1/2$. That is to say,
the function value superconvergent points for this FVE scheme are selected to be uniformly arranged in each element.

\textbf{Scheme 5-1:} For the quintic ($k=5$) FVE scheme, $(\alpha_{1},\,\alpha_{2})=(\frac{\sqrt{15}}{4}, \, \frac{5\sqrt{7}}{21})$.
The parameters are obtained from (\ref{eq:alpha_relations_5order}), with taking $\alpha_{1}=\frac{\sqrt{15}}{4}$.
The computing nodes are selected according to
\[
\tilde{D}_j = \pm\sqrt{673/1344+\sqrt{459/3371}},\,\, \pm\sqrt{673/1344-\sqrt{459/3371}},\,\, \mathrm{and}\,\, \pm 1,\quad j=0,1,\dots,5.
\]

\textbf{Scheme 6-1:} For 6-order FVE scheme ($k=6$),
\[
(\alpha_1, \alpha_2, \alpha_3)\approx (0.976279142450726,\,0.637859916292150,\,0.0303474120727480).
\]
This scheme is obtained by {\em Method II} with letting $(\tilde{a}_1, \tilde{a}_2)=(\frac{19}{20},\frac{1}{19})$. The distance between computing nodes of this scheme are quite nonuniform.

\textbf{Scheme 3-1} does not satisfies
the 3-3-order orthogonal condition. While, the other three schemes satisfy the $k$-$k$-order orthogonal condition. Example~\ref{exam:Superconver_Cubic_L2} shows that \textbf{Scheme 3-1} has the superconvergence of the derivative but doesn't have the superconvergence of the function value, which helps to verify {\em Method I} and the properties for odd order schemes listed in Table~\ref{tab:relations}. Example~\ref{exam:Superconver_5order} shows that \textbf{Scheme 5-1} possesses the superconvergence of the derivative as well as the function value, which helps to verify the properties for odd order schemes listed in Table~\ref{tab:relations}.
\textbf{Scheme 4-1} and \textbf{Scheme 6-1} are both obtained by {\em Method II},
which satisfies the $k$-$(k-1)$-order orthogonal condition. Examples~\ref{exam:Superconver_4order} and \ref{exam:Superconver_6order} both help to verify the properties for even order schemes
listed in Table~\ref{tab:relations}. Moreover, since one of the dual point is quite near the end point on the righthand side
on each $K$ for \textbf{Scheme 6-1}, Example~\ref{exam:Superconver_6order} also supports that,
the dual points (derivative superconvergent points) in {\em Method I} and the computing nodes (function value superconvergent points)
in {\em Method II} can be chosen freely. In Example~\ref{exam:Superconver_4order}, figure~\ref{fig:H1L2_superconv_4order}
shows how the superconvergence phenomenon happens.

\begin{example}\label{exam:Superconver_Cubic_L2}
\begin{table}[htbp!]
  \centering
  \caption{Numerical results for Example~\ref{exam:Superconver_Cubic_L2}.}
    \begin{tabular}{ccccccccc}
    \toprule
    $h$     & $|u-u_h|_{1}$ & Order & $\|u-u_h\|_{0}$ & Order & $|u_h-u_I|_{1}$ & Order & $\|u_h-u_I\|_{0}$ & Order    \\
    \midrule
%    1/2&	6.5281E-02&	\textbackslash{}   &2.3887E-03	& \textbackslash{}	&2.5868E-03&	\textbackslash{}&	2.3297E-04&	\textbackslash{}\\
%    1/4	&4.1077E-03	&3.9902     &7.4542E-05	&5.0020 	&8.3620E-05	&4.9512 	&3.6653E-06	&5.9901\\
%    1/8	&2.5656E-04	&4.0010 	&2.3212E-06	&5.0051 	&2.6414E-06	&4.9845 	&5.7499E-08	&5.9943\\
%    1/16&1.6014E-05	&4.0019 	&7.2357E-08	&5.0036 	&8.2871E-08	&4.9943 	&9.0071E-10	&5.9963\\
    1/2	&6.0876E-01	& \textbackslash{}	&2.8536E-02	&\textbackslash{}	&3.5633E-02	&\textbackslash{}	&4.9228E-03	&\textbackslash{}       \\
    1/4	&7.7122E-02	&2.9806 	&1.7875E-03	&3.9968 	&2.1900E-03	&4.0242 	&3.2491E-04	&3.9214\\
    1/8	&9.6579E-03	&2.9974 	&1.1165E-04	&4.0009 	&1.3632E-04	&4.0058 	&2.0814E-05	&3.9644\\
    1/16	&1.2069E-03	&3.0003 	&6.9720E-06	&4.0012 	&8.5043E-06	&4.0027 	&1.3143E-06	&3.9852\\
    1/32	&1.5081E-04	&3.0006 	&4.3551E-07	&4.0008 	&5.3100E-07	&4.0014 	&8.2522E-08	&3.9934\\
    \bottomrule
    \end{tabular}%
  \label{tab:Example3order_superconv}%
\end{table}%
We apply \textbf{Scheme 3-1} to the BVP (\ref{eq:BVP1D}) with $p(x)=2,\, q(x)=1,\, r(x)=1$,
and $f$ being chosen so that the exact solution is $u(x)=\sin x$. The first 6 columns of Table~\ref{tab:Example3order_superconv}
show that \textbf{Scheme 3-1} has the optimal $H^1$ and $L^2$ convergence rate as well as the superconvergence of the derivative.
While, the last two columns of Table~\ref{tab:Example3order_superconv} indicate the function value of $u_h$ is not superclose to $u_I$.

Of course, we can not simply conclude that the corresponding FVE scheme does not possess superconvergence property of the function value,
because the choice of $u_I$, which may affects the numerical results, is not unique. In other words,
the $k$-$k$-order orthogonal condition is sufficient conditions for the superconvergence of the function value of the FVEM.
\end{example}

\begin{example}\label{exam:Superconver_5order}
\begin{table}[htbp!]
  \centering
  \caption{Numerical results for Example~\ref{exam:Superconver_5order}.}
    \begin{tabular}{ccccccccc}
    \toprule
    $h$     & $|u-u_h|_{1}$ & Order & $\|u-u_h\|_{0}$ & Order & $|u_h-u_I|_{1}$ & Order & $\|u_h-u_I\|_{0}$ & Order    \\
    \midrule
%    1/2	&4.3272E-04	&\textbackslash{}	&2.5266E-05	&\textbackslash{}	&1.7730E-05	&\textbackslash{}&	2.6053E-06	&\textbackslash{}          \\
%    1/3	&3.8239E-05	&5.9839 	&1.4806E-06	&6.9970 	&1.0594E-06	&6.9490 	&1.0317E-07	&7.9636 \\
%    1/4	&6.8169E-06	&5.9943 	&1.9760E-07	&7.0006 	&1.4092E-07	&7.0120 	&9.9423E-09	&8.1324 \\
%    1/5	&1.7879E-06	&5.9978 	&4.1606E-08	&6.9821 	&2.7779E-08	&7.2776 	&1.6297E-09	&8.1042 \\
    1/2	&6.2039E-03	&\textbackslash{}&	3.1708E-04	&\textbackslash{}&	2.1126E-04	&\textbackslash{}&	1.5302E-05	&\textbackslash{}    \\
    1/3	&8.2572E-04	&4.9738 	&2.8098E-05	&5.9770 	&1.8541E-05	&6.0008 	&8.8949E-07	&7.0169 \\
    1/4	&1.9663E-04	&4.9880 	&5.0159E-06	&5.9896 	&3.2925E-06	&6.0078 	&1.1826E-07	&7.0139 \\
    1/5	&6.4527E-05	&4.9933 	&1.3166E-06	&5.9943 	&8.6207E-07	&6.0054 	&2.4754E-08	&7.0083 \\
    1/6	&2.5952E-05	&4.9958 	&4.4120E-07	&5.9965 	&2.8849E-07	&6.0040 	&6.9031E-09	&7.0042 \\
    \bottomrule
    \end{tabular}%
  \label{tab:Example5order_superconv}%
\end{table}%
We apply \textbf{Scheme 5-1} to the BVP (\ref{eq:BVP1D}) with $p(x)=e^x,\, q(x)=\sin x,\, r(x)=3$,
and $f$ being chosen so that the exact solution is $u(x)=\sin x$. Table~\ref{tab:Example5order_superconv} shows \textbf{Scheme 5-1}
possesses all the four properties listed in Table~\ref{tab:relations}. The results verify that,
if the $5$-$5$-order orthogonal condition is satisfied, the corresponding FVE scheme has the superconvergence of the function value.
Moreover, the dual points are just the derivative superconvergent points, and the 6 function value superconvergent points in each element can be derived by {\em Method II}.
\end{example}

\begin{example}\label{exam:Superconver_4order}
\begin{figure}[!ht]
    \centering
    \subfigure{
    \begin{minipage}[t]{.43\textwidth}
      \centering
      \includegraphics*[width=200pt]{./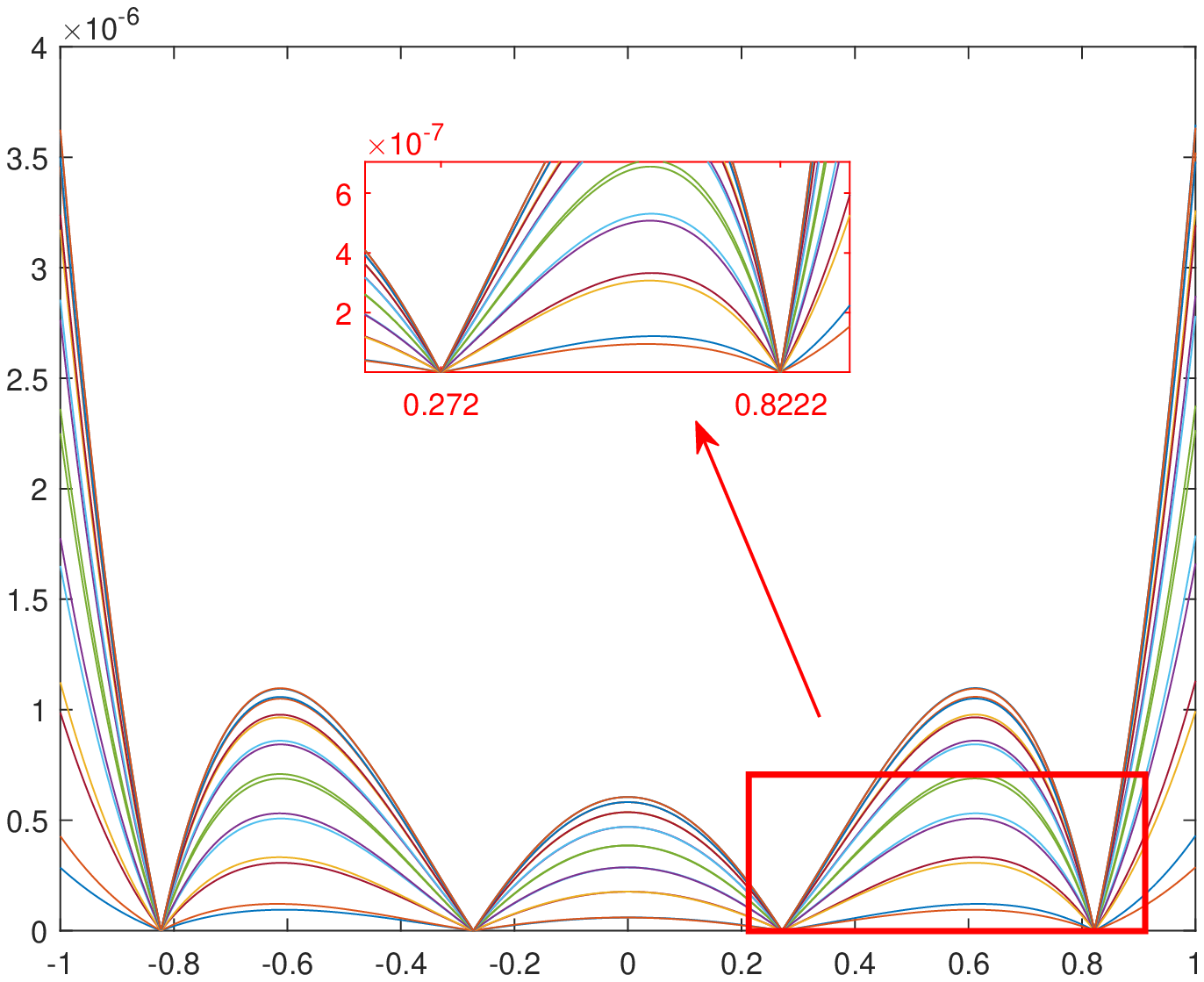}
      \title{(a) $|u'-u_{h}'|$ on each element $K$ been projected to the reference element $\hat{K}$.}
%      \title{(a) The derivative errors $|u'-u_{h}'|$ on each element $K$ be shown together on the reference element $\hat{K}=[-1,1]$.}
      %\caption{1}
    \end{minipage}}
    \subfigure{
    \begin{minipage}[t]{.43\textwidth}
      \centering
      \includegraphics[width=200pt]{./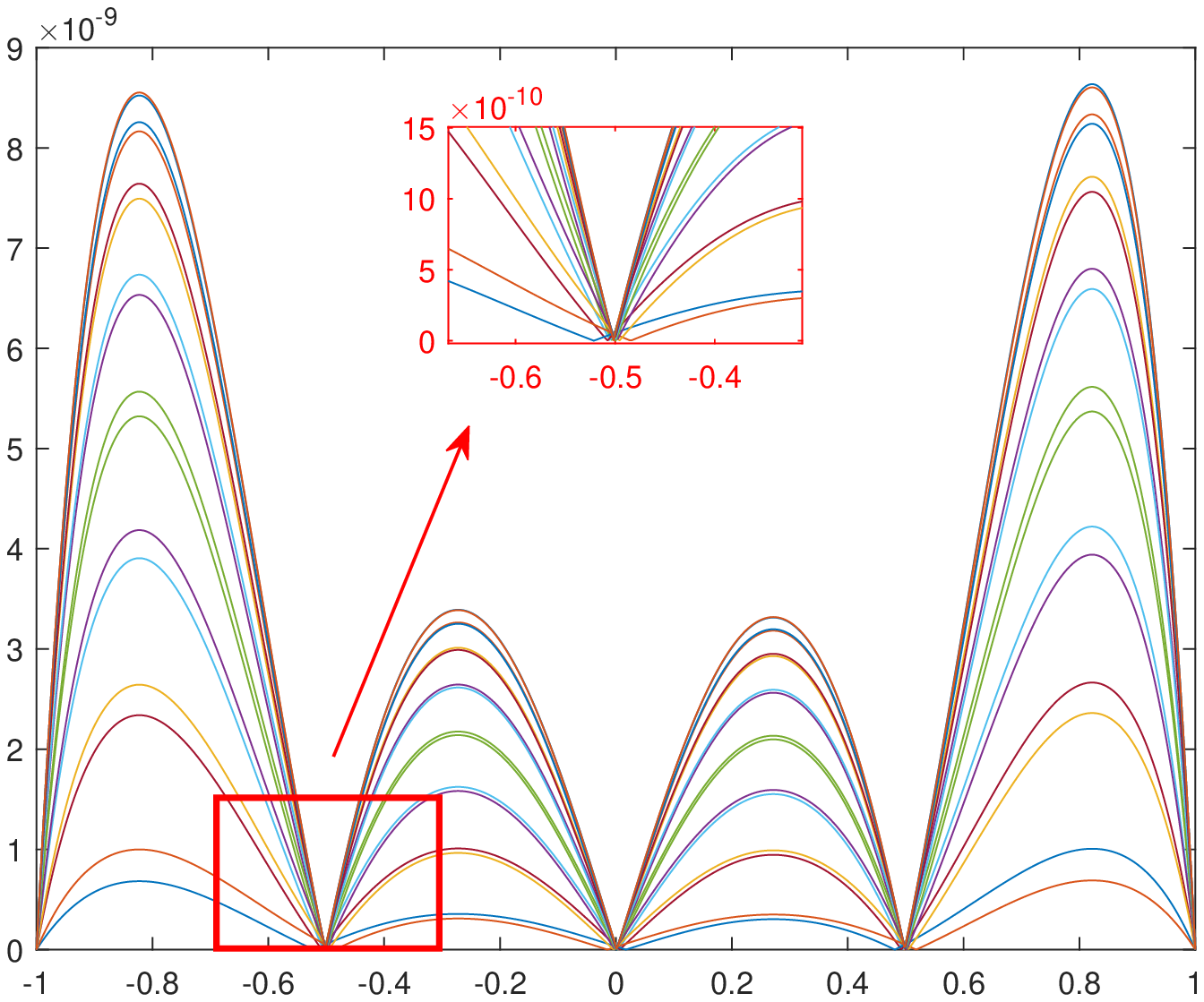}
      \title{(b) $|u-u_{h}|$ on each element $K$ been projected to the reference element $\hat{K}$.}
%      \title{(b) The function value errors $|u-u_{h}|$ on each element $K$ be shown together on the reference element $\hat{K}=[-1,1]$.}
      %\caption{2}
    \end{minipage}}
    \caption{Numerical performance of \textbf{Scheme 4-1} applied to Example~\ref{exam:Superconver_4order}.
    The mesh size is $h=1/16$. The errors on each element $K$ are shown together on the reference element $\hat{K}=[-1,1]$.}
    \label{fig:H1L2_superconv_4order}
\end{figure}

\begin{table}[htbp!]
  \centering
  \caption{Numerical solution of Example~\ref{exam:Superconver_4order}.}
    \begin{tabular}{ccccccccc}
    \toprule
    $h$     & $|u-u_h|_{1}$ & Order & $\|u-u_h\|_{0}$ & Order & $|u_h-u_I|_{1}$ & Order & $\|u_h-u_I\|_{0}$ & Order    \\
    \midrule
%    1/2   & 3.3769E-02 & & 5.3502E-03 & & 5.9778E-03 & \textbackslash{} \\
%    1/4   & 6.5768E-03 & 2.3602  & 5.4653E-04 & 3.2912  & 5.4903E-04 & 3.4447  \\
%    1/8   & 9.8837E-04 & 2.7343  & 4.0966E-05 & 3.7378  & 3.6452E-05 & 3.9128  \\
%    1/16  & 1.3323E-04 & 2.8911  & 2.7284E-06 & 3.9083  & 2.3124E-06 & 3.9785  \\
%    1/32  & 1.7218E-05 & 2.9519  & 1.7454E-07 & 3.9664  & 1.4505E-07 & 3.9947  \\
%    1/64  & 2.1860E-06 & 2.9776  & 1.1007E-08 & 3.9871  & 9.0675E-09 & 3.9997  \\
%    1/128 & 2.7531E-07 & 2.9892  & 6.8956E-10 & 3.9966  & 5.6591E-10 & 4.0021  \\
    1/2&	6.5281E-02&	\textbackslash{}   &2.3887E-03	& \textbackslash{}	&2.5868E-03&	\textbackslash{}&	2.3297E-04&	\textbackslash{}\\
    1/4	&4.1077E-03	&3.9902     &7.4542E-05	&5.0020 	&8.3620E-05	&4.9512 	&3.6653E-06	&5.9901\\
    1/8	&2.5656E-04	&4.0010 	&2.3212E-06	&5.0051 	&2.6414E-06	&4.9845 	&5.7499E-08	&5.9943\\
    1/16&1.6014E-05	&4.0019 	&7.2357E-08	&5.0036 	&8.2871E-08	&4.9943 	&9.0071E-10	&5.9963\\
    \bottomrule
    \end{tabular}%
  \label{tab:Example4order_superconv}%
\end{table}%
%\begin{figure}[!ht]
%    \centering
%%    \includegraphics[width=250pt]{./}
%    \includegraphics[width=250pt]{./L2_superconv_4order_allin1fig_a05.eps}
%    \caption{For quartic FVE scheme, the function value errors $|u-u_{h_1}|$ on each element $K$ be shown together on the reference element $\hat{K}=[0,1]$. (with $a_1=1/2$ and $(\alpha_1,\alpha_2)=(\sqrt{(15+\sqrt{145})/40},\sqrt{(15-\sqrt{145})/40})$, $h=1/64$.)}
%    \label{fig:L2_superconv_4order}
%\end{figure}
%\begin{figure}[!ht]
%    \centering
%%    \includegraphics[width=250pt]{./alpha4order.eps}
%    \includegraphics[width=250pt]{./H1_superconv_4order_allin1fig_a05.eps}
%    \caption{For quartic FVE scheme, the function value errors $|u'-u_{h_1}'|$ on each element $K$ be shown together on the reference element $\hat{K}=[-1,1]$. (with $a_1=1/2$ and $(\alpha_1,\alpha_2)=(\sqrt{(15+\sqrt{145})/40},\sqrt{(15-\sqrt{145})/40})$, $h=1/64$.)}
%    \label{fig:H1_superconv_4order}
%\end{figure}
Consider the BVP (\ref{eq:BVP1D}) with $p(x)=2,\, q(x)=1,\, r(x)=1$, and $f$ being chosen so that the exact solution
of this problem is $u(x)=\sin x$. Table~\ref{tab:Example4order_superconv} indicates that \textbf{Scheme 4-1} has
optimal $H^1$ and $L^2$ convergence rate, as well as the superconvergence of the derivative and the function value.
Figure~\ref{fig:H1L2_superconv_4order} shows the errors of the derivatives $|u'-u_{h}'|$ (subfigure (a)) and the function values $|u-u_{h}|$ (subfigure (b)) on each element $K$, mapped to the reference element $\hat{K}=[-1,1]$ together. We can see that the high accuracy points (also the superconvergent points) of $u_h'-u'$ and $u_h-u$ well fit our theoretical results (since $(\sqrt{(15+\sqrt{145})/40},\, \sqrt{(15-\sqrt{145})/40})\approx (0.8222,0.2720)$).
\end{example}

\begin{example}\label{exam:Superconver_6order}
\begin{table}[htbp!]
  \centering
  \caption{Numerical solution of Example~\ref{exam:Superconver_6order}.}
    \begin{tabular}{ccccccccc}
    \toprule
    $h$     & $|u-u_h|_{1}$ & Order & $\|u-u_h\|_{0}$ & Order & $|u_h-u_I|_{1}$ & Order & $\|u_h-u_I\|_{0}$ & Order    \\
    \midrule
%    1/2&	6.5281E-02&	\textbackslash{}   &2.3887E-03	& \textbackslash{}	&2.5868E-03&	\textbackslash{}&	2.3297E-04&	\textbackslash{}\\
%    1/4	&4.1077E-03	&3.9902     &7.4542E-05	&5.0020 	&8.3620E-05	&4.9512 	&3.6653E-06	&5.9901\\
%    1/8	&2.5656E-04	&4.0010 	&2.3212E-06	&5.0051 	&2.6414E-06	&4.9845 	&5.7499E-08	&5.9943\\
%    1/16&1.6014E-05	&4.0019 	&7.2357E-08	&5.0036 	&8.2871E-08	&4.9943 	&9.0071E-10	&5.9963\\
    1/2	&4.3272E-04	&\textbackslash{}	&2.5266E-05	&\textbackslash{}	&1.7730E-05	&\textbackslash{}&	2.6053E-06	&\textbackslash{}          \\
    1/3	&3.8239E-05	&5.9839 	&1.4806E-06	&6.9970 	&1.0594E-06	&6.9490 	&1.0317E-07	&7.9636 \\
    1/4	&6.8169E-06	&5.9943 	&1.9760E-07	&7.0006 	&1.4092E-07	&7.0120 	&9.9423E-09	&8.1324 \\
    1/5	&1.7879E-06	&5.9978 	&4.1606E-08	&6.9821 	&2.7779E-08	&7.2776 	&1.6297E-09	&8.1042 \\
    \bottomrule
    \end{tabular}%
  \label{tab:Example6order_superconv}%
\end{table}%
We apply \textbf{Scheme 6-1} to the same problem of Example~\ref{exam:Superconver_4order}.
Table~\ref{tab:Example6order_superconv} shows this scheme has all the four properties listed in Table~\ref{tab:relations}.
\end{example}

%\begin{example}\label{exam:Superconver_2D4order}
%\begin{table}[htbp!]
%  \centering
%  \caption{Numerical solution of Example~\ref{exam:Superconver_2D4order} for 2D problem.}
%    \begin{tabular}{ccccccccc}
%    \toprule
%    $h$     & $|u-u_h|_{1}$ & Order & $\|u-u_h\|_{0}$ & Order & $|u_h-u_I|_{1}$ & Order & $\|u_h-u_I\|_{0}$ & Order    \\
%    \midrule
%    1/2&	2.8568e-03&	\textbackslash{}   &1.0158e-04	& \textbackslash{}	&1.1567e-04&	\textbackslash{}&	2.0380e-05&	\textbackslash{}\\
%    1/4	&1.8101e-04	&3.9803     &3.2775e-06	&4.9538 	&2.9800e-06	&5.2786 	&3.3503e-07	&5.9267\\
%    1/8	&1.1353e-05	&3.9950 	&1.0308e-07	&4.9907 	&8.7571e-08	&5.0887 	&5.3090e-09	&5.9797\\
%    1/16&7.1017e-07	&3.9987 	&3.2262e-09	&4.9978 	&2.6991e-09	&5.0199 	&8.3218e-11	&5.9954\\
%    \bottomrule
%    \end{tabular}%
%  \label{tab:Example2D4order_superconv}%
%\end{table}%
%Consider the following Laplace problem in 2D,
%\begin{align} \label{eq:laplacian}
%\left\{ \begin{array}{rcll}
%-\nabla\cdot ( \nabla u)&=& \sin(\pi x)\sin(\pi y),  &{\rm in}\,\, \Omega=(0,1)\times(0,1),\\
%                      u &=& 0 , &{\rm on}\,\, \partial \Omega .
%\end{array} \right.
%\end{align}
%We have done a lot of numerical experiments, and show in Table~\ref{tab:Example2D4order_superconv} the numerical results for the bi-4 FVE scheme, which is resulted from the tensor of \textbf{Scheme 4-1}. Which indicates that the results of this paper are applicable for the 2D cases.
%\end{example}

%
\section{Conclusion}  \label{sec:conclusion}
In this paper, new superconvergent structures are developed from the FVEM,
which includes the Gauss-Lobatto structure and covers much more FVE schemes than the Gauss-Lobatto structure.
By proposing the more general $k$-$r$-order ($k-1\leq r\leq 2k-2$) orthogonal condition and the modified M-decomposition (MMD),
we prove the superconvergence properties for the FVE schemes which satisfy the $k$-$(k-1)$-order (superconvergence of the derivative)
and the $k$-$k$-order (superconvergence of the function value) orthogonal condition.

Easy ways to construct the FVE schemes are presented in subsection~\ref{subsec:construction_easy} ({\em Method I} and {\em Method II}). For odd $k$-order FVEM, we can freely choose the $k$ symmetric derivative
superconvergent points of a FVE scheme on primary element $K$ (excluding the 2 end points of $K$); for even $k$-order FVEM, we can freely choose any $(k+1)$ symmetric function value superconvergent points of a FVE scheme on primary element $K$ (including the 2 end points of $K$). These facts provide us more freedom of choosing the superconvergent points.

In addition, all FVE schemes over symmetric dual meshes are proved to be unconditionally stable. And, the relationships between the orthogonal condition with the convergence properties of the FVE schemes are figured out in table~\ref{tab:relations}: all FVE schemes holds optimal $H^1$ estimate;
the $k$-$(k-1)$-order orthogonal condition ensures the superconvergence of the derivative and optimal $L^2$ estimate; the $k$-$k$-order orthogonal condition ensures the superconvergence of the function values.
Numerical experiments confirm our theoretical results.

The extension of the work at hand to the 2D case is the our next step ongoing. The ideas and methods developed here are instructive to 2D problems on rectangular meshes, while the theory in 2D is not straightforward.

\section{Appendix A: stability and H1 estimate}
\label{sec:stability}
The stability and $H^1$ estimate are the issues we can not skip when we study the $L^2$ estimate and superconvergence.
The authors of \cite{Cao.2013,Plexousakis.2004} gave some results for FVE schemes with some special dual strategies,
such as the Gauss-Lobatto FVE schemes. In this section, we prove the stability and $H^1$ estimate for general FVE schemes
with symmetric dual meshes. The proof in this section benefits a lot from the $k$-points numerical quadrature and \cite{Cao.2013}.

%It's worth mentioning that, the notations and proof in Appendix A have no influence on understanding sections~\ref{sec:prelim}-\ref{sec:construction}. For those who care less about the details of the stability and $H^1$ estimate, they can skip the proof details in Appendix A.

%And  In this section, we'll present the proof for general FVE schemes with symmetric dual meshes, with the help of the $k$-points numerical quadrature. It's interesting to point out that skipping the details in this section almost has no influence on understanding the superconvergence. So, readers can skip this section, if they care less about the details of stability and $H^1$ estimate.

%In \cite{Cao.2013}, the stability and $H^1$ error estimate were proved when dual points are selected as $k$ Gauss points.

We begin with some notations specially used in this section. Firstly, for all $w\in H_{\mathcal{T}}^{m}(\Omega):=\{ w\in C(\Omega):\, w|_{K_{i}} \in H^{m}(K_{i}), \, \forall K_{i}\in\mathcal{T}_{h} \},$ and all $j\geq0$, we define a semi-norm and a norm by
    \begin{align}  \label{eq:norm_Sobolev}
    |w|_{j,\mathcal{T}}=\left(\sum_{ K_{i}\in\mathcal{T}_{h} } |w|_{j,K_{i}}^{2} \right)^{\frac{1}{2}},
    \qquad
    \|w\|_{m,\mathcal{T}}=\left(\sum_{ j=0 }^{m} |w|_{j,\mathcal{T} }^{2} \right)^{\frac{1}{2}}.
        \end{align}

%Firstly, we introduce the broken Sobolev space
%    \[
%    W_{\mathcal{T}}^{m,n}(\Omega):=\{ w\in C(\Omega):\, w|_{K_{i}} \in W^{m,n}, \, \forall K_{i}\in\mathcal{T}_{h} \},
%    \]
%    where $m$ is a given positive integer and $1\leq n\leq \infty$. If $n=2$, we denote $W_{\mathcal{T}}^{m,2}$ by $H_{\mathcal{T}}^{m}$ for simplicity.
%    For all $w\in W_{\mathcal{T}}^{m,n}(\Omega)$ and all $j\geq0$, we define a semi-norm and a norm by
%    \begin{align}  \label{eq:norm_Sobolev}
%    |w|_{j,n,\mathcal{T}}=\left(\sum_{ K_{i}\in\mathcal{T}_{h} } |w|_{j,n,K_{i}}^{n} \right)^{\frac{1}{n}},
%    \qquad
%    \|w\|_{m,n,\mathcal{T}}=\left(\sum_{ j=0 }^{m} |w|_{j,n,\mathcal{T} }^{n} \right)^{\frac{1}{n}}.
%    \end{align}
%    We often use $|\cdot|_{m,\mathcal{T}}$ instead of $|\cdot|_{m,2,\mathcal{T}}$ and $\|\cdot\|_{m,\mathcal{T}}$ instead of $\|\cdot\|_{m,2,\mathcal{T}}$ without causing confusion.

Secondly, for all $v_{h}=\sum\limits_{i=1}^{N}\sum\limits_{j=1}^{k} v_{i,j} \psi_{i,j} \in V_h$, let
    \begin{align}  \label{eq:norm_dual}
    &|v_{h}|^{2}_{1,\mathcal{T}_{h}^{*}} = \sum_{i=1}^{N}\sum_{j=1}^{k} h_i^{-1}[v_{i,j}]^{2},
    \qquad \|v_{h}\|^{2}_{0,\mathcal{T}_{h}^{*}} = \sum_{i=1}^{N}\sum_{j=1}^{k} h_i v_{i,j}^{2}, \\
    &\| v_{h}\|^{2}_{1,\mathcal{T}_{h}^{*}} =
    | v_{h}|^{2}_{1,\mathcal{T}_{h}^{*}}+\|v_{h}\|^{2}_{0,\mathcal{T}_{h}^{*} }.
    \end{align}
    Noticing that $v_{1,0} = v_{N,k}=0$, the following Poincar\'e inequality holds naturally
    \[
    \| v_{h}\|_{0,\mathcal{T}_{h}^{*}} \leq C |v_{h}|_{1,\mathcal{T}_{h}^{*}}, \quad
    \forall v_{h}\in V_{h},
    \]
    where the constant $C$ depends only on $\Omega$ and $k$.

Thirdly, we denote $A_j$ ($j\in \mathbb{Z}_{k}$) the weights of the $k$-points numerical quadrature
    $Q_k(F) = \sum_{j=1}^{k} A_{j} F(G_{j})$
    for computing the integral $I(F) = \int_{-1}^{1} F(x) \ud x.$
    Naturally, the weights on interval
    $K_{i}$ $(i\in \mathbb{Z}_{N})$ are
    $A_{i,j}=\frac{h_i}{2}A_{j}$ $(j\in \mathbb{Z}_{k}).$
    Then, we define a discrete semi-norm $|\cdot|_{1,G}$ by
    \begin{align}  \label{eq:norm_discrete}
    |w|_{1,G} = \left( \sum_{i=1}^{N} \sum_{j=1}^{k} A_{i,j} (w'(g_{i,j}))^2 \right)^{\frac{1}{2}},
    \quad \forall w\in H_{0}^{1}.
    \end{align}

Fourthly, a linear mapping  $\Pi_{\mathcal{T}}^{*}: U_{h}^{k}\rightarrow V_{h}$ is given by ($\Pi_{\mathcal{T}}^{*}$ is different from $\Pi_{h}^{k,*}$ defined in subsection~\ref{subsec:notations}, and $\Pi_{\mathcal{T}}^{*}$ will be used only in this section)
    %\textcolor{red}{Should $w_{i,j}$ be $v_{i,j}$??????}
    \begin{align}\label{eq:PiT}
        \Pi_{\mathcal{T}}^{*} w_{h} = \sum_{i=1}^{N}\sum_{j=1}^{k} w_{i,j} \psi_{i,j},
        \quad w_{h}\in U_{h}^{k},
    \end{align}
    where the coefficients $w_{i,j}$ are determined by the constraints
    $[w_{i,j}] = A_{i,j}w_{h}'(g_{i,j})$, $((i,j)\in\mathbb{Z}_{N} \times \mathbb{Z}_{k} \backslash \{(N,k)\})$.
    Similar with \cite{Cao.2013}, we also have $[w_{N,k}] = w_{N,k} - w_{N,k-1} = A_{N,k} w_{h}'(g_{N,k})$.

According to the idea of the proof of \cite{Cao.2013}, with the help of the $k$-points quadrature, we present the following lemma without the details of the proof.
\begin{lem}\label{lem:norm_equivalence}
Given an FVE scheme, the semi-norms given by $(\ref{eq:norm_dual})$, $(\ref{eq:norm_discrete})$ and $(\ref{eq:norm_Sobolev})$ are equivalent.
\[
|\Pi_{\mathcal{T}}^{*} w_{h}|_{1,\mathcal{T}_{h}^{*}}
\sim |w_{h}|_{1,G}
\sim |w_{h}|_{1,\mathcal{T}}, \quad \forall w_{h} \in U_{h}^{k}.
% \sim \sum_{i=1}^{N}\sum_{j=1}^{k} A_{i,j} (  w_{h}'(g_{i,j})  )^2
% \sim  \int_{a}^{b} (w_{h}'(x))^2 \ud x
\]
\end{lem}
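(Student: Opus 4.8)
The plan is to establish the chain in two independent steps: first $|\Pi_{\mathcal{T}}^{*} w_{h}|_{1,\mathcal{T}_{h}^{*}}\sim |w_{h}|_{1,G}$, which is an algebraic identity up to bounded factors, and then $|w_{h}|_{1,G}\sim |w_{h}|_{1,\mathcal{T}}$, which I would reduce to the equivalence of two positive-definite quadratic forms on a fixed finite-dimensional space. The weights $A_{j}$ of the $k$-point quadrature $Q_{k}$ are fixed numbers depending only on $k$ and the reference dual points $G_{1},\dots,G_{k}$; the whole argument rests on their positivity together with the unisolvence of $k$ distinct nodes over $P^{k-1}(\hat{K})$, so that there are constants $0<c_{0}\le C_{0}$ with $c_{0}\le A_{j}\le C_{0}$ for every $j$ (finiteness of the index set gives the upper bound for free, while positivity is the structural fact, cf.\ \cite{Cao.2013}).

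For the first equivalence I would insert the defining relation $[w_{i,j}]=A_{i,j}w_{h}'(g_{i,j})$ of $\Pi_{\mathcal{T}}^{*}$ from $(\ref{eq:PiT})$, together with $A_{i,j}=\frac{h_{i}}{2}A_{j}$, directly into $(\ref{eq:norm_dual})$, obtaining
\[
|\Pi_{\mathcal{T}}^{*} w_{h}|^{2}_{1,\mathcal{T}_{h}^{*}}=\sum_{i=1}^{N}\sum_{j=1}^{k}h_{i}^{-1}(A_{i,j})^{2}(w_{h}'(g_{i,j}))^{2}=\sum_{i=1}^{N}\sum_{j=1}^{k}\frac{A_{j}}{2}\,A_{i,j}(w_{h}'(g_{i,j}))^{2},
\]
so that each summand is exactly $\frac{A_{j}}{2}$ times the corresponding summand $A_{i,j}(w_{h}'(g_{i,j}))^{2}$ of $|w_{h}|^{2}_{1,G}$ in $(\ref{eq:norm_discrete})$. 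Since $c_{0}/2\le A_{j}/2\le C_{0}/2$, the two semi-norms are equivalent term by term. The only subtlety is that $\Pi_{\mathcal{T}}^{*}$ must be well defined and the identity $[w_{i,j}]=A_{i,j}w_{h}'(g_{i,j})$ must also hold for the excluded index $(N,k)$; this follows by telescoping the jumps, using $w_{h}(0)=w_{h}(1)=0$ for $w_{h}\in U_{h}^{k}$ and the exactness of the $k$-point rule on $P^{k-1}$, exactly as in \cite{Cao.2013}.

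For the second equivalence I would localize to a single element $K_{i}$ and pull back to $\hat{K}=[-1,1]$ via the affine map $x(\xi)=\frac{1}{2}(x_{i}+x_{i-1}+h_{i}\xi)$, which sends $G_{j}$ to $g_{i,j}$. Writing $\hat{w}(\xi)=w_{h}|_{K_{i}}(x(\xi))\in P^{k}(\hat{K})$, a direct change of variables shows that both quantities carry the \emph{same} scaling factor $2/h_{i}$:
\[
|w_{h}|^{2}_{1,K_{i}}=\frac{2}{h_{i}}\int_{-1}^{1}(\hat{w}')^{2}\,\ud\xi,\qquad \sum_{j=1}^{k}A_{i,j}(w_{h}'(g_{i,j}))^{2}=\frac{2}{h_{i}}\sum_{j=1}^{k}A_{j}(\hat{w}'(G_{j}))^{2}.
\]
Both right-hand sides are quadratic forms in $\hat{w}'$, which ranges over the $k$-dimensional space $P^{k-1}(\hat{K})$. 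The integral form is positive definite trivially, while $\sum_{j}A_{j}(\hat{w}'(G_{j}))^{2}$ is positive definite because $A_{j}>0$ and any polynomial of degree $\le k-1$ vanishing at the $k$ distinct points $G_{1},\dots,G_{k}$ must vanish identically. Equivalence of norms on a finite-dimensional space then yields constants $c_{1}\le C_{1}$, depending only on $k$ and the $G_{j}$ (hence on neither $i$ nor $h_{i}$), with $c_{1}|w_{h}|^{2}_{1,K_{i}}\le\sum_{j}A_{i,j}(w_{h}'(g_{i,j}))^{2}\le C_{1}|w_{h}|^{2}_{1,K_{i}}$; summing over $i\in\mathbb{Z}_{N}$ gives $|w_{h}|_{1,G}\sim|w_{h}|_{1,\mathcal{T}}$.

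I expect the main obstacle to be keeping all constants provably $h$-independent. The decisive observation that makes this clean is that the integral form and the quadrature form share the identical scaling $2/h_{i}$ under the affine pull-back, which collapses the per-element estimate to a single $h$-free inequality on $\hat{K}$. The other essential ingredient is the positivity of the weights $A_{j}$: this, and the last-jump consistency needed to make $\Pi_{\mathcal{T}}^{*}$ well defined, are precisely the places where the symmetric-dual-mesh structure of the scheme enters, and they are what I would lean on \cite{Cao.2013} to supply.
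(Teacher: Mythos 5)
Your proof is correct, and it is essentially the argument the paper has in mind. The paper in fact states this lemma \emph{without proof}, saying only that it follows ``according to the idea of the proof of \cite{Cao.2013}, with the help of the $k$-points quadrature''; your two steps --- substituting $[w_{i,j}]=A_{i,j}w_{h}'(g_{i,j})$ and $A_{i,j}=\tfrac{h_i}{2}A_j$ into (\ref{eq:norm_dual}) to compare with (\ref{eq:norm_discrete}) term by term, then pulling back to $\hat{K}$ (where both forms carry the common factor $2/h_i$) and invoking equivalence of positive-definite quadratic forms on the $k$-dimensional space $P^{k-1}(\hat{K})$ --- are precisely the reconstruction of that idea, including the telescoping-plus-exactness argument for the last jump $[w_{N,k}]$, which the paper likewise only asserts ``similar with \cite{Cao.2013}.''

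One caveat deserves emphasis, because it is the only unproved ingredient in your write-up and it cannot actually be delegated to \cite{Cao.2013}: the positivity of the weights $A_j$. In \cite{Cao.2013} the dual points are Gauss points, whose interpolatory weights are positive; here the $G_j$ are arbitrary symmetric points, and positivity can fail. For instance, for $k=3$ the interpolatory weights at $\{-\alpha_1,0,\alpha_1\}$ are $A_1=A_3=\tfrac{1}{3\alpha_1^{2}}$ and $A_2=2-\tfrac{2}{3\alpha_1^{2}}$, so $A_2\le 0$ whenever $\alpha_1\le 1/\sqrt{3}$; in that case the quantity in (\ref{eq:norm_discrete}) can be negative (take $\hat{w}'$ to be the Lagrange basis polynomial of the node $G_2$ on one small element), it is no longer a semi-norm, and the lemma itself is false. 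So the bound $0<c_0\le A_j$ must be read as a standing hypothesis on the admissible dual meshes rather than as a fact; the paper makes the same silent assumption elsewhere (its inf-sup proof uses $I_1\ge p_0\sum_{i,j}A_{i,j}(w_{h}'(g_{i,j}))^{2}$, which also needs $A_{i,j}\ge 0$). Your proof is exactly as complete as the paper's statement allows, and has the merit of making this assumption visible; just promote it from a citation to an explicit hypothesis.
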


\begin{thm} \label{thm:infsup}
For sufficiently small the mesh size $h$, the following inf-sup condition are satisfied.
\begin{align} \label{eq:infsup}
\inf_{w_{h}\in U_{h}^{k}}  \sup_{v_{h}\in V_{h}} \frac{a_{h}(w_{h},v_{h})}{\|w_{h}\|_{1} \|v_{h}\|_{\mathcal{T}_h^{*}}} \geq c_{0},
\end{align}
where $c_{0}>0$ is a constant depending only on $k$, $\alpha_{0}$, $\kappa$ and $\Omega$.
\end{thm}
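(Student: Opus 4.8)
The plan is to establish (\ref{eq:infsup}) by the usual device of exhibiting, for each trial function $w_h\in U_h^k$, a single test function that makes $a_h(w_h,\cdot)$ coercive. The natural candidate is the transfer operator $v_h=\Pi_{\mathcal{T}}^{*}w_h$ from (\ref{eq:PiT}), whose defining property $[v_{i,j}]=A_{i,j}\,w_h'(g_{i,j})$ is tailored to collapse the diffusion part of $a_h$ into a quadrature sum. Indeed, substituting $v_h$ gives
\[
\sum_{i=1}^{N}\sum_{j=1}^{k}[v_{i,j}]\,p(g_{i,j})\,w_h'(g_{i,j})
=\sum_{i=1}^{N}\sum_{j=1}^{k}A_{i,j}\,p(g_{i,j})\,\big(w_h'(g_{i,j})\big)^2
\geq p_0\,|w_h|_{1,G}^2,
\]
where I use that the $k$-point rule over a symmetric dual mesh has positive weights $A_{i,j}$ together with $p\geq p_0>0$. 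This quadrature identity is the whole source of the coercivity.

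The second step is to convert the discrete seminorm into Sobolev seminorms. By Lemma~\ref{lem:norm_equivalence} one has $|w_h|_{1,G}\sim|w_h|_{1,\mathcal{T}}=|w_h|_1$ and simultaneously $|\Pi_{\mathcal{T}}^{*}w_h|_{1,\mathcal{T}_h^{*}}\sim|w_h|_1$. Combined with the Poincar\'e inequality on $V_h$ this yields $\|v_h\|_{\mathcal{T}_h^{*}}\lesssim|w_h|_1$, and since $w_h\in U_h^k\subset H_0^1$ the Poincar\'e inequality also gives $\|w_h\|_1\sim|w_h|_1$. Hence the diffusion contribution alone already has the shape $a_h^{\mathrm{diff}}(w_h,v_h)\gtrsim|w_h|_1^2\gtrsim\|w_h\|_1\,\|v_h\|_{\mathcal{T}_h^{*}}$ demanded by (\ref{eq:infsup}); everything that follows is about showing the lower-order terms do not destroy this.

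The remaining, and genuinely delicate, piece is the convection–reaction contribution, which (since $v_h$ is piecewise constant on the dual cells) equals $\int_{\Omega}v_h\,(q\,w_h'+r\,w_h)\,\ud x$. A blunt Cauchy–Schwarz estimate only bounds this by $C\,|w_h|_1^2$, of the same order as the coercive term, so it cannot be absorbed directly; this is exactly why the statement restricts to sufficiently small $h$. The way through is to compare $v_h=\Pi_{\mathcal{T}}^{*}w_h$ with $w_h$ itself. Because the jumps of $v_h$ are the $k$-point quadrature of $w_h'$, which is exact on each element (the rule integrates the degree-$(k-1)$ polynomial $w_h'|_K$ exactly), $v_h$ matches $w_h$ at the element endpoints and one obtains the key remainder estimate $\|v_h-w_h\|_0\lesssim h\,|w_h|_1$. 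Writing $v_h=w_h+(v_h-w_h)$, the principal part becomes the continuous form $\int_{\Omega}(q\,w_h'+r\,w_h)\,w_h\,\ud x=\int_{\Omega}\big(r-\tfrac12 q'\big)w_h^2\,\ud x\geq\gamma\|w_h\|_0^2\geq0$, after integrating the convection term by parts (boundary terms vanish as $w_h\in H_0^1$) and invoking the hypothesis $r-\tfrac12 q'\geq\gamma>0$; this part only helps. The leftover $\big|\int_{\Omega}(v_h-w_h)(q\,w_h'+r\,w_h)\,\ud x\big|\lesssim h\,|w_h|_1^2$ is a genuine $O(h)$ perturbation.

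Assembling the three estimates gives $a_h(w_h,\Pi_{\mathcal{T}}^{*}w_h)\geq (c-Ch)\,|w_h|_1^2\geq\tfrac{c}{2}\,|w_h|_1^2$ once $h$ is below the threshold $c/(2C)$, and the norm equivalences then turn the right-hand side into $c_0\,\|w_h\|_1\,\|\Pi_{\mathcal{T}}^{*}w_h\|_{\mathcal{T}_h^{*}}$; taking the supremum over $v_h$ and the infimum over $w_h$ yields (\ref{eq:infsup}). I expect the quadrature-remainder bound $\|\Pi_{\mathcal{T}}^{*}w_h-w_h\|_0\lesssim h\,|w_h|_1$ to be the technical crux, since it is precisely what recasts the otherwise non-absorbable convection term as the sign-definite quantity controlled by $r-\tfrac12 q'\geq\gamma$; the positivity of the quadrature weights $A_{i,j}$ (guaranteed by the symmetric dual mesh) is the other point that must be handled with care.
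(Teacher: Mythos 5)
Your proof is correct, and it rests on the same skeleton as the paper's: test with $v_h=\Pi_{\mathcal{T}}^{*}w_h$ from (\ref{eq:PiT}), extract coercivity of the diffusion part from the jump identity $[w_{i,j}]=A_{i,j}w_h'(g_{i,j})$ and the positivity of the weights, invoke Lemma~\ref{lem:norm_equivalence} together with Poincar\'e, and absorb an $O(h)\,|w_h|_1^2$ perturbation for $h$ small. The genuine difference is how the convection--reaction remainder is bounded. The paper sets $V(x)=\int_0^x(qw_h'+rw_h)\,\mathrm{d}s$, Abel-sums $I_2$ into $-\int_\Omega w_h'V\,\mathrm{d}x+\sum_iE_i$, and estimates the quadrature errors $E_i$ by the Peano-type formula $E_i=(w_h'V)^{(k)}(\xi_i)\,O(h_i^{k+1})$ combined with the Leibnitz rule and inverse inequalities. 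You instead split $v_h=w_h+(v_h-w_h)$ and use $\|v_h-w_h\|_0\lesssim h\,|w_h|_1$; note that these are literally the same remainder, since $\int_\Omega(v_h-w_h)(qw_h'+rw_h)\,\mathrm{d}x=\sum_iE_i$, so what differs is the estimation technique, not the decomposition (both principal parts reduce to $\int_\Omega(r-\tfrac12q')w_h^2\,\mathrm{d}x\geq\gamma\|w_h\|_0^2$ after integration by parts). Your route is more elementary --- no $k$-th derivatives, no inverse inequality --- and it only requires $q',r\in L^{\infty}$, whereas the paper's constant $c_1=\max\{\|q\|_{k-1,\infty},\|r\|_{k-1,\infty}\}$ silently assumes $W^{k-1,\infty}$ coefficients, more than (\ref{eq:BVP1D}) postulates; what the paper's route buys is that it reuses exactly the quadrature-error machinery of \cite{Cao.2013} employed throughout Appendix A. Two points on your side need tightening: (i) endpoint matching (from exactness of the rule on $P^{k-1}(K)$) only prevents the error from accumulating across elements; it does not by itself yield $\|v_h-w_h\|_0\lesssim h\,|w_h|_1$, since inside $K_i$ you must still control the partial sums $\sum_{m\leq j}A_{i,m}w_h'(g_{i,m})-\int_{x_{i-1}}^{x}w_h'\,\mathrm{d}s$, which is done by Cauchy--Schwarz using positive weights; and (ii) positivity of the $A_{i,j}$ is an assumption rather than a consequence of symmetry of the dual mesh, but this is shared by the paper, whose bound $I_1\geq p_0\sum A_{i,j}(w_h'(g_{i,j}))^2$ needs it just as much.
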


\begin{proof}
It follows from the bilinear form (\ref{eq:FVEM_BVP1D}) that
\[
a_{h}(w_{h}, \Pi_{\mathcal{T}}^{*} w_{h}) = I_{1} +I_{2}, \quad \forall w_{h}\in U_{h}^{k},
\]
with
\[
I_{1} = \sum_{i=1}^{N} \sum_{j=1}^{k} [w_{i,j}] p(g_{i,j})w_{h}'(g_{i,j}), \quad
I_{2} = \sum_{i=1}^{N} \sum_{j=1}^{k} w_{i,j} \int_{g_{i,j}}^{g_{i,j+1}} (q(x)w_{h}'(x)+r(x)w_{h}(x)) \, \ud x
\]
Therefore,
\[
I_{1} \geq p_{0} \sum_{i=1}^{N} \sum_{j=1}^{k} A_{i,j} (w_{h}'(g_{i,j}))^2  \sim  p_{0} |w_{h}|_{1}^2.
\]

Let $V(x)=\int_{a}^{s} \,(q(s)w_{h}'(s)+r(s)w_{h}(s) )\, d s$ and denote by
\[
E_{i} = \int_{x_{i-1}}^{x_{i}} w_{h}'(x)\,V(x) \, d x - \sum_{j=1}^{k} A_{i,j} w_{h}'(g_{i,j})V(g_{i,j})
\]
the error of the $k$-points numerical quadrature in the interval $[x_{i-1}, x_{i}]$, $i\in \mathbb{Z}_{N}$. Then
\[
I_{2} = -\sum_{i=1}^{N} \sum_{j=1}^{k} [w_{i,j}] V(g_{i,j}) = -\int_{a}^{b} \, w_{h}'(x) \, V(x) \,\ud x + \sum_{i=1}^{N} \, E_{i}.
\]
With the fact that $w_{h}(a)=w_{h}(b)=0$ and
\[
\int_{a}^{b} \, q(x)\,w_{h}'(x)\,w_{h}(x)\, \ud x = -\frac{1}{2}\int_{a}^{b} \, q'(x)\,w_{h}^{\,2}(x) \, \ud x,
\]
we obtain
\[
-\int_{a}^{b} \, w_{h}'(x) \, V(x) \,\ud x = \int_{a}^{b} \, (r(x)-\frac{q'(x)}{2}) \, w_{h}^{\,2}(x) \, \ud x
\geq \gamma \|w_{h}\|_{0}^{2}.
\]
On the other hand, by \cite{Davis.1984}, for all $i\in\mathbb{Z}_{N}$
\[
E_{i} = (w_{h}'V)^{(k)}(\xi_{i}) \, O(h_{i}^{k+1}),
\]
where $\xi_{i}\in[x_{i-1},x_{i}]$. By the Leibnitz formula of  derivatives and the inverse inequality, we have
\begin{align*}
|(w_{h}'V)^{(k)}(\xi_{i})|  \leq &  \sum_{t=1}^{k}
\left(
\begin{array}{c}
   k \\
   t \\
\end{array}
\right)
|(qw_{h}' + rw_{h})^{(t-1)} (w_{h}')^{(k-t)}(\xi_{i})|    \\                                                 \leq &  c_{1} \,
\sum_{t=1}^{k} \, \|w_{h}\|_{t,\infty,K_{i}} \, \|w_{h}\|_{(k-t+1),\infty,K_{i}}   \\
\lesssim &  c_{1} \,
\sum_{t=1}^{k}
\, h^{-(t-\frac{1}{2})} \, |w_{h}|_{1,K_{i}} \, h^{-(k-t+1-\frac{1}{2})}\, |w_{h}|_{1,K_{i}}  \\
= &  \tilde{c}_{1}\,
\, h^{-k}\, |w_{h}|_{1,K_{i}}^{2},
\end{align*}
with
\[
c_{1}=\max \{\|q\|_{k-1,\infty,K_{i}},\, \|r\|_{k-1,\infty,K_{i}}\} \max_{t\leq k}\left(
\begin{array}{c}
   k \\
   t \\
\end{array}
\right).
\]
Combining the above estimates, we have
\[
I_{2} \gtrsim \gamma \|w_{h}\|_{0,\mathcal{T}}^{2} - \tilde{c}_{1} h_{i}|w_{h}|_{1,\mathcal{T}}^{2},
\]
where $\tilde{c}_{1}$ is a constant independent of $h_i$. Then for sufficiently small $h$, we have
\[
a_{h}(w_{h}, \Pi_{\mathcal{T}}^{*} w_{h})
\geq \frac{p_{0}}{2}|w_{h}|_{1,\mathcal{T}}^{2} + \frac{\gamma}{2}\|w_{h}\|_{0,\mathcal{T}}^{2}
\geq \frac{1}{2} \min\{p_{0},  \gamma \} \|w_{h}\|_{1,\mathcal{T}}^{2}.
\]
By Lemma~\ref{lem:norm_equivalence}, one has
\[
\|w_{h}\|_{1,\mathcal{T}} \gtrsim \|\Pi_{\mathcal{T}}^{*} w_{h}\|_{\mathcal{T}_h^{*}}.
\]
Therefore, for any $w_{h}\in U_{h}^{k}$, we can obtain
\[
\sup_{v_{h}\in V_{h}} \frac{a_{h}(w_{h},\, v_{h})}{ \| v_{h} \|_{\mathcal{T}_h^{*}} }
\gtrsim \frac{a_{h}(w_{h},\, \Pi_{\mathcal{T}}^{*} w_{h} )}{ \| \Pi_{\mathcal{T}}^{*} w_{h} \|_{\mathcal{T}_h^{*}} } \geq c_{0} \|w_{h}\|_{1,\mathcal{T}},
\]
where $c_{0}$ is a constant depending only on $k$, $p_{0}$, $\gamma$, and $\Omega$, the inf-sup condition (\ref{eq:infsup}) then follows.
\end{proof}

With the inf-sup condition (\ref{eq:infsup}) and a similar procedure to \cite{Cao.2013},
we have the $H^1$ estimate for FVE schemes with symmetric dual meshes.
\begin{thm} \label{thm:H1}
Assume that $u\in H_{0}^{1}(\Omega)\cap H_{\mathcal{T}}^{k+1}(\Omega)$ is the solution of (\ref{eq:BVP1D}),
and $u_{h}$ is the FVE solution of (\ref{eq:FVEM_BVP1D}). Then we have
\begin{align}\label{eq:H1estimate}
\|u-u_{h}\|_{1} \leq Ch^{k} \|u\|_{k+1,\mathcal{T}},
\end{align}
where $C$ is a constant independent of $h$.
\end{thm}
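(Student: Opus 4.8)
The plan is to follow the standard route for Petrov--Galerkin error analysis: split the error through the Lagrange interpolant and then control the discrete part by the inf-sup condition (\ref{eq:infsup}). Writing $u - u_h = (u - \Pi_h^k u) + (\Pi_h^k u - u_h)$, the first piece is handled by the classical elementwise interpolation estimate $\|u - \Pi_h^k u\|_1 \lesssim h^k \|u\|_{k+1,\mathcal{T}}$, valid for $u \in H_{\mathcal{T}}^{k+1}$. It then remains to bound the discrete error $w_h := \Pi_h^k u - u_h \in U_h^k$.

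Since the FVE scheme (\ref{eq:FVEM_BVP1D}) is obtained by integrating (\ref{eq:BVP1D}) exactly over each control volume, the exact solution satisfies $a_h(u,v_h) = (f,v_h)$ for all $v_h \in V_h$; subtracting the scheme yields the Galerkin orthogonality $a_h(u-u_h,v_h)=0$. Hence $a_h(w_h,v_h) = a_h(\Pi_h^k u - u, v_h) = a_h(\eta,v_h)$, where $\eta := \Pi_h^k u - u$ denotes the interpolation error. Applying (\ref{eq:infsup}) to $w_h$, I would reduce the entire estimate to the single bound
\[
|a_h(\eta,v_h)| \lesssim h^k \|u\|_{k+1,\mathcal{T}}\, \|v_h\|_{\mathcal{T}_h^*}, \quad \forall v_h \in V_h,
\]
from which $\|w_h\|_1 \lesssim h^k \|u\|_{k+1,\mathcal{T}}$ is immediate and the triangle inequality closes the argument.

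The technical heart is thus estimating $a_h(\eta,v_h)$, which splits into the flux term $\sum_{i,j}[v_{i,j}]\,p(g_{i,j})\,\eta'(g_{i,j})$ and the volume term $\sum_{i,j} v_{i,j}\int_{g_{i,j}}^{g_{i,j+1}}(q\eta'+r\eta)\,\ud x$. For the flux term I would apply Cauchy--Schwarz to peel off $|v_h|_{1,\mathcal{T}_h^*} = (\sum_{i,j} h_i^{-1}[v_{i,j}]^2)^{1/2} \leq \|v_h\|_{\mathcal{T}_h^*}$ and then bound $\sum_{i,j} h_i\,(\eta'(g_{i,j}))^2$. A scaling argument to $\hat K$ together with a Bramble--Hilbert bound for the pointwise derivative interpolation error gives $|\eta'(g_{i,j})|^2 \lesssim h_i^{2k-1}\,|u|_{k+1,K_i}^2$, so the weighted sum is $O(h^{2k})\|u\|_{k+1,\mathcal{T}}^2$, yielding the factor $h^k$. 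For the volume term, Cauchy--Schwarz peels off $\|v_h\|_{0,\mathcal{T}_h^*} \leq \|v_h\|_{\mathcal{T}_h^*}$, and since each control volume has width $\lesssim h_i$ the residual factor is controlled by $\|q\eta'+r\eta\|_0 \lesssim \|\eta'\|_0 + \|\eta\|_0 \lesssim h^k\|u\|_{k+1,\mathcal{T}}$, the leading contribution coming from $\eta'$. I expect the main obstacle to be the flux term: the jumps $[v_{i,j}]$ must be matched against pointwise values of $\eta'$ at the dual points $g_{i,j}$, and recovering the sharp power of $h$ demands the careful scaling and Bramble--Hilbert estimate rather than a crude $L^\infty$ bound. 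The remaining steps are routine uses of interpolation estimates and the norm equivalences of Lemma~\ref{lem:norm_equivalence}.
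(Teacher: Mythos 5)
Your proposal is correct and follows essentially the same route as the paper, which proves Theorem~\ref{thm:H1} by invoking the inf-sup condition (\ref{eq:infsup}) together with the standard procedure of \cite{Cao.2013}: split the error through the interpolant, use the consistency (Galerkin orthogonality) of the FVE scheme, and estimate $a_h(u-\Pi_h^k u, v_h)$ term by term against $\|v_h\|_{\mathcal{T}_h^*}$. The paper omits these details, but your flux-term scaling bound $|\eta'(g_{i,j})|^2 \lesssim h_i^{2k-1}|u|_{k+1,K_i}^2$ and the volume-term Cauchy--Schwarz argument are exactly the ingredients that procedure supplies.
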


%\section{Appendix B: M-functions and M-decomposition} \label{sec:appendix}
%\section{Appendix B: M-functions} \label{sec:appendix}
%The M-functions on $\hat{K}=[-1,1]$ are given by \cite{Chen.1995,Cockburn.2017}:
%$\hat{M}_{0}=1$, $\hat{M}_{1}=\xi$, $\hat{M}_{2}=\xi^2,\dots, \hat{M}_{i}=\frac{\ud^{i}}{\ud \xi^{i}},\dots$
%with properties:
%\begin{align*}
%&\hat{M}_{i}(\pm 1)=0,\quad i=2,3,\dots,\\
%&(\hat{M}_{i},\hat{M}_j)=0,\quad i\not= j\pm2.
%\end{align*}

%\section*{References}
\bibliographystyle{abbrv} % Ordered by name. Lastname(short)_Firstname_Articlename_Journal_...

%plain，按字母的顺序排列，比较次序为作者、年度和标题.
%unsrt，样式同plain，只是按照引用的先后排序.
%alpha，用作者名首字母+年份后两位作标号，以字母顺序排序.
%abbrv，类似plain，将月份全拼改为缩写，更显紧凑.
%ieeetr，国际电气电子工程师协会期刊样式.
%acm，美国计算机学会期刊样式.
%siam，美国工业和应用数学学会期刊样式.

%\bibliographystyle{elsarticle-harv} % Standard style
%\bibliographystyle{plain} % Standard style

\bibliography{Framework20190909}

\end{document}